% !TeX spellcheck = en_EN-EnglishUnitedKingdom
\documentclass[10pt,a4paper]{article}

\usepackage[english]{babel}
\usepackage[T1]{fontenc}
\usepackage{bbm}
\usepackage{adjustbox}
\usepackage{graphicx}
\usepackage{verbatim}
\usepackage{subcaption}
\usepackage{multirow}
\usepackage{amsmath}
\usepackage{amssymb}
\usepackage{booktabs}
\usepackage{siunitx}
\usepackage{amsthm}
  \theoremstyle{plain}
  \newtheorem{theorem}{Theorem}
  \newtheorem{lemma}{Lemma}
  \newtheorem{proposition}{Proposition}
  \newtheorem{corollary}{Corollary}
  \newtheorem{assumption}{Assumption}
  \newtheorem{definition}{Definition}
  
  \newtheorem*{remark}{Remark}
\usepackage{mathrsfs}
\usepackage{mathtools}
\usepackage[left=2.7cm,right=2.7cm,top=3.1cm,bottom=3.1cm]{geometry}
\usepackage[colorlinks=true, allcolors=blue, hypertexnames=false]{hyperref}

\usepackage[round]{natbib}
\usepackage{algorithm}
\usepackage{algpseudocode}

\setlength{\parskip}{0.35em}
 \sloppy
 \allowdisplaybreaks[4]

\begin{document}

\title{Stabilizing Rate of Stochastic Control Systems}

\author{Hui Jia\footnote{College of Artificial Intelligence, Nankai University, Tianjin 300350, P.R. China, E-mail: 2120210368@mail.nankai.edu.cn} \and Yuan-Hua Ni\footnote{Corresponding author. College of Artificial Intelligence, Nankai University, Tianjin 300350, P.R. China, E-mail: yhni@nankai.edu.cn}\and Guangchen Wang\footnote{School of Control Science and Engineering, Shandong University, Jinan 250061, P.R. China, E-mail: wguangchen@sdu.edu.cn}}
\date{\today}

\maketitle

\begin{abstract}
	This paper  develops a quantitative framework for analyzing the mean-square exponential stabilization of stochastic linear systems with multiplicative noise, focusing specifically on the optimal stabilizing rate, which characterizes the fastest exponential stabilization achievable under admissible control policies.	
The framework consists of two complementary developments. First, we extend
the norm-based analysis from deterministic switched systems to the stochastic
setting and establish computable upper and lower bounds for the optimal
stabilizing rate. Second, by restricting attention to state-feedback policies, we introduce an optimal control formulation of the optimal stabilizing rate problem and derive a Bellman-type equation. Since this Bellman-type equation is not directly tractable, we recast it as a nonlinear matrix eigenvalue problem whose valid solutions require strictly positive-definite matrices. 
To overcome the possible absence of such solutions, we introduce
a regularization scheme and develop a Regularized Normalized Value
Iteration (RNVI) algorithm, which in turn generates strictly positive-definite fixed points for a perturbed version of the original nonlinear matrix eigenvalue problem while producing feedback controllers. Evaluating these regularized solutions further yields
certified lower and upper bounds for the optimal stabilizing rate, providing a constructive procedure for estimating the fastest achievable mean-square decay rate. We also provide a sufficient condition for the certified gap to close and a necessary structural condition satisfied by each regular
	nonvanishing-gap fixed-point sequence. Numerical experiments further
	demonstrate the effectiveness of the proposed framework.

\textbf{Keywords:}  stochastic control, mean-square exponential stability, multiplicative noise, stabilizing rate, optimal feedback control, regularization.
\end{abstract}

\section{Introduction}

In the theoretical framework of stochastic control, stabilizability analysis remains a core topic that permeates both fundamental research and engineering applications; classic theories focus centrally on a pivotal question for uncertain, noisy dynamic systems: can controller design guarantee bounded and asymptotically convergent trajectories?
Classic results (e.g., \cite{kushner1967}) based on Lyapunov methods and Riccati theory provide affirmative answers under appropriate conditions.
However, as modern engineering systems grow increasingly complex, 
merely proving stabilizability is no longer sufficient; the rate of stabilization, meaning how fast the state returns to the desired equilibrium, has become equally critical. 
This quantitative aspect directly influences both safety and performance in practical scenarios.
For instance, in power and energy systems~(\cite{Zhang2019,Weitenberg18}), distributed frequency control has been shown to achieve exponential convergence and to examine how communication disruptions influence the resulting convergence rate, while transient frequency control enforces finite time return to a prescribed safe band with a guaranteed convergence rate, and thus the pace of recovery is treated as part of the design rather than a byproduct.
For robotics and other safety-critical infrastructures~(\cite{Ames19,Nguyen16}), control Lyapunov and control barrier function methods often operate in tandem to enforce both stability and safety, and in particular, exponential barrier formulations embed explicit requirements on how rapidly safety constraints must be restored, 
indicating that the rate of convergence is an inherent component of reliable operation.
%When it comes to networked systems~(\cite{nedic2010,Olshevsky09}), explicit convergence-rate bounds have been established for consensus and coordination dynamics. 
Across these domains, stabilization feasibility alone is insufficient and one must also take into account how rapidly stability is achieved.

Over the past decades, the classic theory of stability in deterministic systems is well established and provides the foundation for much of modern control. Lyapunov's second method (\cite{Khalil02}) offers general certificates of asymptotic and exponential stability for nonlinear systems, while for linear time-invariant systems, algebraic Riccati equations and linear quadratic regulator (LQR) design yield constructive stabilizing state-feedback laws (\cite{Anderson71}). The development of linear matrix inequality (LMI) technique (\cite{Boyd94}) unifies and generalizes classic criteria, making stability analysis and controller synthesis numerically tractable. These methods constitute a mature toolbox: Lyapunov theory provides the conceptual basis, Riccati and LQR methods furnish constructive synthesis, and LMIs deliver computational scalability. Yet, all of these are primarily designed to decide whether stabilization is feasible and they rarely provide certified information about the \emph{rate} of stabilization.
When stochastic disturbances are present, the problem becomes significantly more challenging. In particular, systems with \emph{multiplicative noise}, where randomness enters directly through system coefficients, arise naturally in applications ranging from portfolio optimization (\cite{Dombrovskii03}) to systems biology (\cite{yao11}). For such systems, the standard notion is \emph{mean-square exponential
	stability} (ESMS), which requires the second moment of the state to decay
exponentially. Mean-square stability is particularly relevant in stochastic control because it is formulated in terms of the second moment, which measures the expected state energy, or the average squared magnitude of stochastic deviations around the desired equilibrium, and is naturally compatible with quadratic performance criteria and Lyapunov and Riccati analysis; see, e.g., \cite{kushner1967}. This problem has been studied for decades, with early contributions dating back to the late 1960s (\cite{Kleinman69,McLane69,Wonham67}) and the continued development through numerous works \cite{Zhang15,Feng18,Qi18,Yu2018,Li21}. Much of this literature establishes stability conditions via modified algebraic Riccati equations (\cite{Wonham67,Jacques76,Blankenship77}) or LMI formulations (\cite{Boyd94,Elia05}), thereby connecting mean-square stability analysis to convex optimization. In parallel, a frequency-domain viewpoint is initiated by \cite{Willems71}, who establish necessary and sufficient conditions for mean-square stability in single-input single-output systems; this perspective has since been extended to multi-input multi-output cases (\cite{Hinrichsen96,Lu02,Tian17,Bamieh20}). From this standpoint, mean-square stability can be interpreted as a robust stability problem with respect to stochastic uncertainties. These approaches are powerful and constructive, but they remain \emph{qualitative}: they tell us whether stabilization is possible, not how fast it can be achieved.

	The missing rate information is particularly relevant in the
	multiplicative-noise setting. For such systems, the decay of the second
	moment is affected jointly by the coefficient matrices in the deterministic
	part and those in the multiplicative-noise part. Thus, even after
	mean-square stabilizability is known, the fastest achievable
	second-moment decay rate remains a nontrivial quantitative question. This
	quantity is different from the optimal disturbance attenuation level in
	\(H_\infty\) control: the former measures the fastest mean-square decay of
	the state, whereas the latter measures the worst-case input-output
	amplification from external disturbances to controlled outputs.

This quantitative viewpoint is related to the stabilizing rate framework of \cite{Hu17},
where stabilization is quantified both by feasibility and by how fast it can be achieved.
For deterministic switched systems, this framework enables computable upper and lower bounds on the fastest achievable stabilizing rate 
through norm and seminorm based constructions~(\cite{Hu17,HU2024}). 
These results establish a foundation for quantitative stability analysis, 
but they rely on finite-dimensional properties, such as norm equivalence. 
These properties fail in stochastic settings, where the natural state space is infinite-dimensional and none of the finite-dimensional assumptions carry over.  
Consequently, despite the appeal of Hu's quantitative perspective, 
a systematic extension to stochastic settings remains open.

To sum up, several important limitations remain in the existing literature. First, studies on stochastic systems with multiplicative noise provide tractable feasibility conditions, but they seldom address how fast such systems can be stabilized in the mean-square sense.
Second, works on deterministic switched systems do quantify the fastest convergence rate, yet their methods rely critically on finite-dimensional properties that are absent in the stochastic setting. 
Third, a systematic and computable treatment of fastest stabilizing rate for stochastic systems with multiplicative disturbances is still lacking.
To bridge these gaps, this paper develops a systematic framework for quantifying the
\emph{optimal stabilizing rate}, defined as the fastest mean-square exponential
stabilization achievable for stochastic systems with multiplicative noise.
Our contributions are organized around two complementary developments for
	the optimal stabilizing rate problem. First, we extend the norm-based
	analysis from deterministic switched systems to stochastic systems with
	multiplicative noise. This extension provides a quantitative
	characterization of the stabilizing rate and establishes computable upper
	and lower bounds for the optimal stabilizing rate.
	Second, by restricting attention to state-feedback policies, we formulate
	the optimal stabilizing rate problem as an optimal control problem with a
	nonlinear cost functional. This formulation leads to a Bellman-type
	equation and then to a nonlinear matrix eigenvalue problem. Since valid
	solutions of this eigenvalue problem require strictly positive-definite
	matrices, we introduce a regularization scheme and develop the Regularized
	Normalized Value Iteration (RNVI) algorithm. The resulting fixed points
	generate feedback controllers and certified lower and upper bounds for the
	optimal stabilizing rate. We also provide a sufficient condition under
	which these bounds become tight and identify a necessary structural
	condition for nonvanishing certified gaps.
The main contributions are summarized as follows.
\begin{enumerate}
	\item \textbf{Norm-based characterization of the optimal stabilizing rate}
	
	We extend the norm-based framework of~\cite{Hu17} to stochastic systems
	with multiplicative noise and establish a quantitative characterization of
	the optimal stabilizing rate through computable upper and lower bounds.
	
	\item \textbf{Bridging stabilization theory and optimal control}  \\
	\noindent Restricting to state-feedback policies,
	we introduce an \emph{optimal control formulation with a nonlinear cost
		functional} for the optimal stabilizing rate problem and derive a
	\emph{Bellman-type equation}. Under the conditions of
		Theorem~\ref{thm:bellman-main}, this formulation characterizes the optimal
		stabilizing rate.
	As this equation is not directly amenable to analysis, it is reduced to a nonlinear matrix eigenvalue problem that requires a strictly positive-definite solution, providing a tractable representation for determining the optimal stabilizing rate.
	
	\item \textbf{Regularized approximation for nonlinear matrix eigenvalue problems}  
	
	\noindent The nonlinear matrix eigenvalue equation derived from the Bellman-type formulation may fail to admit strictly positive-definite solutions, leading to ill-posedness on the boundary of the semi-definite cone.  
	To address this puzzle, we introduce a regularization parameter, which guarantees the existence of a strictly positive-definite fixed point.  
	More generally, this procedure offers a regularized approximation approach for such nonlinear matrix eigenvalue problems that require positive-definite solutions.  
	The resulting formulation not only restores well-posedness but also offers a constructive pathway to approximate eigen-solutions in settings where direct solutions are otherwise unattainable.
	
		\item \textbf{Certified bounds and gap analysis}
		
		\noindent The regularized fixed points yield certified lower and upper bounds for
		the optimal stabilizing rate. We further study the gap between these
		bounds. First, we give a sufficient condition under which the certified
		gap closes as the regularization parameter tends to zero. Second, we
		identify a necessary structural condition satisfied by each regular
		nonvanishing-gap fixed-point sequence.

	\item \textbf{Regularized Normalized Value Iteration (RNVI)}  
	
	\noindent To make this regularized approach constructive, we develop the RNVI algorithm. RNVI iteratively computes the positive-definite solutions and the corresponding feedback controllers for a sequence of decreasing regularization parameters.  
	The procedure starts from a relatively large value of the regularization parameter, for which the convergence is numerically reliable, and proceeds gradually through a continuation scheme that uses each computed solution as the initialization for the next stage.  
Along this
		continuation path, the certified lower and upper bounds developed above
		are evaluated, and the tightest bounds among all parameter values are reported as the final certified estimate.

\end{enumerate}

Together, these results provide both theoretical guarantees and computational tools 
for analyzing and quantifying the stochastic optimal stabilizing rate.
The remainder of the paper is organized as follows. Section~\ref{sec:problem} introduces the stochastic system model and the definition of the optimal stabilizing rate. Section~\ref{sec:norm} extends the norm-based framework of \cite{Hu17} to our stochastic setting.
Section~\ref{sec:ocp-bellman} develops an optimal control formulation within
the feedback policy class and derives a Bellman-type equation, whose strictly
positive definite quadratic solutions are characterized by a nonlinear matrix
eigenvalue equation. Section~\ref{sec:regularization} develops the RNVI algorithm to obtain bounds for the optimal stabilizing rate. Section~\ref{sec:numerics} reports numerical experiments that illustrate the performance of the proposed method. Section~\ref{sec:conclusion} then concludes this paper.

\textbf{Notations.}  
Let $\mathbb{E}[\cdot]$ denote the mathematical expectation, and 
$\mathbb{E}[\cdot \mid \mathcal{F}]$ the conditional expectation with respect to $\sigma$-algebra $\mathcal{F}$. $\mathbb{R}_{+}$ stands for the set of positive real numbers, and $\mathbb{N}$ represents the set of non-negative integers $\{0, 1, 2, \dots\}$. The symbol $\times$ denotes the Cartesian product of sets, and $\otimes$
	denotes the Kronecker product of matrices.
$\mathbb{S}^n_+$ and $\mathbb{S}^n_{++}$ denote the sets of $n \times n$ symmetric positive semi-definite matrices and symmetric positive-definite matrices, respectively; accordingly, $A \succeq 0$ ($A \succ 0$) indicates that matrix $A$ is symmetric positive semi-definite (symmetric positive-definite). $\lambda_{\min} (A)$ and $\lambda_{\max} (A)$ denote the minimum and maximum eigenvalues of the symmetric matrix $A$, respectively. For a square matrix \(A\), \(\operatorname{spr}(A)\) denotes its spectral radius.
The transpose and inverse of a matrix are denoted by $(\cdot)^\top$ and $(\cdot)^{-1}$, respectively. $\operatorname{Tr}(A)$ denotes the trace of matrix $A$. $I$ denotes the identity matrix with compatible dimensions, and $\sigma(x)$ represents the $\sigma$-algebra generated by the random variable $x$.  
For a matrix $A$, $\|A\|$ and $\|A\|_F$ denote its spectral norm and Frobenius norm, respectively. For two positive functions \(f\) and \(g\), \(f(x)\asymp g(x)\) means that \(c g(x)\le f(x)\le C g(x)\) for all sufficiently small \(x\) and some constants \(0<c\leq C<\infty\). The infimum of a function $f$ is denoted as $f^*$. $x\mapsto f(x)$ indicates the mapping from input $x$ to output $f(x)$. 
For a mapping $f$, $Df(x)[H]$ denotes directional derivative of $f$ at $x$ in direction $H$ and
$Df(x)$ denotes the corresponding linear operator $H\mapsto Df(x)[H]$.
Finally, $L^2(\Omega;\mathbb{R}^m)$ denotes the Hilbert space of $\mathbb{R}^m$-valued square-integrable random variables, equipped with the inner product $\langle x, y \rangle := \mathbb{E}[x^\top y]$ and the induced norm $\big(\mathbb{E}[x^\top x]\big)^{1/2}$.

\section{Problem Formulation}\label{sec:problem}
Consider a discrete-time plant, which evolves according to some discrete-time stochastic difference equation
\begin{align}\label{system}
	x_{k+1}=(A+\bar{A}\omega_k)x_k+(B+\bar{B}\omega_k)u_k,\quad k\in \mathbb{N}.
\end{align}
Here, $x_k\in \mathbb{R}^n$ and $u_k \in \mathbb{R}^{m}$ are  system state and control input, respectively. The matrices
	$A,\bar A\in\mathbb R^{n\times n}$ and $B,\bar B\in\mathbb R^{n\times m}$
	are constant system matrices. The noise sequence $\{\omega_k\}$ consists of independent and identically distributed (i.i.d) Gaussian random variables with zero mean and variance $\sigma^2$. The initial state $x_0\in L^2(\Omega;\mathbb R^n)$ is independent of the noise sequence $\{\omega_k\}$. Here, \(\omega_k\) is interpreted as a common random
	parameter that simultaneously affects the state and input coefficient
	matrices. This formulation is suitable for systems driven by a shared
	stochastic operating condition. More general models with distinct and
	possibly correlated noise processes can be treated by replacing the
	covariance-weighted terms in the associated operator.

Let $\mathcal{F}_k=\sigma(x_0,\omega_0,\dots,\omega_{k-1})$ be the
natural filtration generated by the past information. For \(l\in\mathbb N\) and \(k\in\mathbb N\), define
	\[
	\begin{aligned}
		L^2_{\mathcal F_k}(\Omega;\mathbb R^l)
		:=
		\bigl\{
		\phi_k:\Omega\to\mathbb R^l
		\ \big|\ 
		\phi_k \text{ is } \mathcal F_k\text{-measurable},
		\mathbb E[\phi_k^\top\phi_k]<\infty
		\bigr\}.
	\end{aligned}
	\]%
\noindent The control must satisfy the causality and square-integrability
constraints. Accordingly, we introduce the following admissible control set
\[
\mathcal U
:=
\left\{
u=(u_0,u_1,\dots)
\;\middle|\;
u_k\in L^2_{\mathcal F_k}(\Omega;\mathbb R^m),\ k\in\mathbb N
\right\}.
\]
We next introduce some basic notions of mean-square stability.
\begin{definition}
	The control-free system~\eqref{system} with $u_k\equiv0$ is said to be $\ell^2$-exponentially mean-square stable, if for any initial value $x_0\in L^2(\Omega;\mathbb R^n)$, there exist  constants $\kappa\geq 0$, $\rho \in (0,1)$ such that the solution of (\ref{system}) satisfies
	\begin{align}\label{de-ms}
		\mathbb{E}[x^\top_k x_k]\leq \kappa \rho^{2k}\mathbb{E}[x_0^\top x_0],\quad  \forall k \in \mathbb{N}.
	\end{align}
\end{definition}

\begin{definition}
	System (\ref{system}) is said to be open-loop $\ell^2$-exponentially stabilizable in the mean-square sense, if for any initial value $x_0\in L^2(\Omega;\mathbb R^n)$, there exists $u\in \mathcal{U}$  such that the solution of (\ref{system}) satisfies (\ref{de-ms}).
\end{definition}

\begin{definition}
	System (\ref{system}) is said to be closed-loop $\ell^2$-exponentially stabilizable in the mean-square sense, if there exists $u_k=-Kx_k$  with a constant matrix $K$, such that for any $x_0\in L^2(\Omega;\mathbb R^n)$ the closed-loop system of (\ref{system}) is $\ell^2$-exponentially mean-square stable.
\end{definition}

For linear stochastic system \eqref{system}, 
Proposition~4.1 of \cite{NI201565} shows that open-loop $\ell^2$-stabilizable is equivalent to closed-loop $\ell^2$-stabilizable. Hence, the open-loop $\ell^2$-stabilizability and the closed-loop $\ell^2$-stabilizability will be both called $\ell^2$-stabilizability. 
%Hence, when studying the attainability of the optimal stabilizing rate, one can restrict attention to (linear) state-feedback controllers without loss of stabilizability.
Inspired by the work of~\cite{Hu17}, where the concept of the \emph{optimal stabilizing rate} is introduced for deterministic switched systems, we extend this concept to the stochastic systems with multiplicative noise. 
This motivates the following definitions.

\begin{definition}\label{def-rho-u}
	For each admissible control policy $u\in\mathcal U$, its 
	\emph{mean-square stabilizing rate} is defined as	
	\begin{align*}	
		&\rho(u)
		:= \inf\Big\{\rho > 0 \;\Big|\; \exists\,\kappa\ge 0 \ \text{such that} \quad \mathbb{E}[x_k^\top x_k]
		\le \kappa\,\rho^{2k}\,\mathbb{E}[x_0^\top x_0], \
		\forall x_0\in L^2(\Omega;\mathbb R^n),\ \forall k\in\mathbb N \Big\},
	\end{align*}
	where $\{x_k\}$ denotes the state trajectory of system~\eqref{system} driven by~$u$.
	We use the convention that the infimum of the empty set is $+\infty$;
	hence $\rho(u)$ takes values in $[0,+\infty]$.
\end{definition}

\begin{remark}
	The constant $\kappa$ in Definition \ref{def-rho-u}
	does not affect the value of $\rho(u)$.
	The reason is that $\kappa$ only compensates for a finite number of initial
	steps, while  $\rho(u)$ characterizes the asymptotic
	growth of the second moment as $k\to\infty$.
\end{remark}
It is worth emphasizing that we do \emph{not} assume a priori that system~\eqref{system} is
$\ell^2$-exponentially stabilizable in the mean-square sense.  The quantity $\rho(u)$ defined above simply quantifies the exponential rate associated with the chosen admissible control policy $u$; 
if $\rho(u)<1$, the corresponding state trajectories converge in the mean-square sense, whereas 
$\rho(u)\ge 1$ means that it does not achieve a
uniform exponential decay factor strictly below one.

\begin{definition}\label{def-rho-star}
	The optimal stabilizing rate of system~\eqref{system} is then defined as
	\[
	\rho^\ast := \inf_{u\in\mathcal U} \rho(u).
	\]
	A control policy $u^\ast\in\mathcal U$ (if it exists) is called \emph{optimal} if it achieves this
	infimum, i.e., $\rho(u^\ast)=\rho^\ast$.
\end{definition}

\begin{remark}\label{rem-rho}
	Both $\rho(u)$ and $\rho^\ast$ are always well defined.  
For any admissible policy \(u\in\mathcal U\), if the set in
		Definition~\ref{def-rho-u} is nonempty, its infimum exists because it is
		bounded below by \(0\); if it is empty, we use the convention
		\(\inf\emptyset=+\infty\). Hence, \(\rho(u)\) is well defined in
		\([0,+\infty]\). Moreover, since the zero control policy $u_k\equiv0$ belongs to
		\(\mathcal U\) and admits a finite exponential bound, the set
		\(\{\rho(u):u\in\mathcal U\}\) is nonempty, bounded below by \(0\), and
		contains a finite element. Therefore, \(\rho^\ast\) is well defined and
		finite.
	Our definition differs slightly from the deterministic setting of~\cite{Hu17},
	where the analogue of the growth inequality $\|x_k\|\leq \kappa \rho^k \|x_0\|$ used to define the stabilizing rate is written with an
	\emph{arbitrary} vector norm on $\mathbb R^{n}$. For finite-dimensional linear space, all norms are equivalent and the stabilizing rate is therefore
	independent of the chosen norm.
	For the stochastic system~\eqref{system}, the state evolves in
	$L^{2}(\Omega;\mathbb R^{n})$, where different norms are not equivalent in general.
	Consequently, the value of $\rho^\ast$ depends on the chosen norm.
	This paper works with the mean-square norm
	$\big(\mathbb{E}[x^\top x]\big)^{1/2}$.
\end{remark}

\begin{remark}
	The boundary case \(\rho^\ast=0\) is allowed by
	Definition~\ref{def-rho-star}, but it should be interpreted as a limiting
	zero-rate case of the optimal stabilizing rate problem rather than as an
	ordinary exponential stabilization regime with a positive decay factor.
	For a fixed admissible policy \(u\), the quantity \(\rho(u)\) is defined as
	the infimum of all achievable exponential factors in the mean-square estimate
	\[
	\mathbb E[x_k^\top x_k]
	\leq
	\kappa\rho^{2k}\mathbb E[x_0^\top x_0],
	\qquad
	\forall x_0\in L^2(\Omega;\mathbb R^n),\ \forall k\in\mathbb N .
	\]
	Therefore, \(\rho^\ast=0\) means that the achievable decay factors over the
	admissible policy class can be made arbitrarily small: for every
	\(\rho>0\), there exists an admissible policy \(u_\rho\) satisfying the above
	estimate with this decay factor. Hence, \(\rho^\ast=0\) represents a boundary
	phenomenon of the optimization problem, which is stronger than ordinary
	mean-square exponential stabilizability.
	
	The zero rate may or may not be attained by a fixed policy. If there exists
	\(u^\ast\in\mathcal U\) such that \(\rho(u^\ast)=0\), then the infimum is
	achieved. In the linear feedback subclass \(u_k=-Kx_k\), this case has a
	finite-dimensional interpretation. Let
	\[
	F_K=A-BK,\qquad
	\bar F_K=\bar A-\bar B K .
	\]
	The second-moment dynamics satisfy
	\[
	\operatorname{vec}(\Sigma_{k+1})
	=
	\left(
	F_K\otimes F_K+
	\sigma^2\bar F_K\otimes\bar F_K
	\right)
	\operatorname{vec}(\Sigma_k),
	\]
	and therefore
	\[
	\rho(K)
	=
	\sqrt{
		\operatorname{spr}
		\left(
		F_K\otimes F_K+
		\sigma^2\bar F_K\otimes\bar F_K
		\right)} .
	\]
	Consequently, if a finite gain \(K\) attains zero rate, then the associated
	second-moment operator is nilpotent, and the second moment vanishes after
	finitely many steps.
	
	For comparison, consider the deterministic discrete-time linear system
	\[
	x_{k+1}=Ax_k+Bu_k .
	\]
	Under a linear feedback law \(u_k=-Kx_k\), the closed-loop dynamics are
	characterized by \(A-BK\), and the state decay rate is determined by
	\(\operatorname{spr}(A-BK)\). If a finite gain \(K\) achieves zero rate, then
	\[
	\operatorname{spr}(A-BK)=0,
	\]
	which implies that \(A-BK\) is nilpotent and the state converges to zero in
	finite time. This interpretation is consistent with the classical
	controllability decomposition and pole-placement theory: the controllable
	modes can be arbitrarily assigned by state feedback, whereas the
	uncontrollable modes are unaffected by the input and determine the
	limitation of achievable closed-loop rates.
	
	The multiplicative-noise case is fundamentally different. Although the
	second-moment operator becomes linear once a feedback gain is fixed, the
	feedback gain simultaneously affects both the nominal dynamics and the noise
	channel. Therefore, the zero-rate boundary is not characterized solely by a
	controllability decomposition of the pair \((A,B)\), and additional analysis
	of the induced second-moment dynamics is required.
	
	The case \(\rho^\ast=0\) may also be nonattained. Then no admissible policy
	achieves zero rate, but there exists a sequence of admissible policies
	\(\{u_j\}\) such that
	\[
	\rho(u_j)\rightarrow0 .
	\]
	Within the linear feedback subclass, if a sequence of finite gains
	\(\{K_j\}\) satisfies \(\rho(K_j)\rightarrow0\) while zero rate is not
	attained, then the gain sequence cannot remain bounded; otherwise, a
	convergent subsequence together with continuity of the associated
	second-moment operator would produce a finite gain achieving zero rate.
\end{remark}
Having introduced the quantities $\rho(u)$ and $\rho^\ast$, we now state the
optimization problem that will serve as the primary object of this paper.

\vspace{4pt}
\noindent\textbf{Problem (OSR).} Solve the optimization problem: 
\[
\inf_{u\in \mathcal{U}} \rho(u),
\]
subject to (\ref{system}).

\medskip
\begin{remark}
	Problem (OSR) does not assume mean-square stabilizability. 
	By Definition \ref{def-rho-star}, $\rho^\ast$ is the infimum over nonnegative $\rho$ for which the stability condition holds for some $\kappa$ and an admissible control policy. 
	Accordingly, $\rho^\ast$ may be less than, equal to, or greater than one; 
	$\rho^\ast<1$ implies that mean-square stabilization is achievable and $\rho^\ast$ characterizes the fastest rate of convergence, 
	whereas $\rho^\ast\ge 1$ indicates that only a minimal growth rate can be attained. 
	Throughout this paper, we uniformly refer to $\rho^\ast$ as the optimal stabilizing rate.
\end{remark}

In the remainder of the paper, we first develop a norm-based
characterization of the optimal stabilizing rate, which extends the
quantitative stability analysis from deterministic switched systems to the
stochastic setting with multiplicative noise and provides computable bounds
for the optimal stabilizing rate.
We then introduce an optimal control formulation within the state-feedback
policy class. This formulation leads to a Bellman-type equation, which can
be further reduced to a nonlinear matrix eigenvalue problem. Since valid
solutions of this eigenvalue problem require strictly positive-definite
matrices, we develop the Regularized Normalized Value Iteration scheme to
construct regularized fixed points and corresponding feedback controllers.
The resulting solutions yield certified bounds for the optimal stabilizing
rate. We also study the gap between the certified bounds, including a
sufficient condition for gap closing and a necessary structural condition
for nonvanishing gaps.

\section{Norm-Based Characterization of the Optimal Stabilizing Rate}
\label{sec:norm}

The norm-based framework introduced in~\cite{Hu17} provides a quantitative
approach for analyzing stabilizing rates in deterministic switched systems.
In particular, it establishes computable upper and lower bounds for the
optimal stabilizing rate through the analysis of an induced one-step
operator. In this section, we extend this framework to stochastic systems
with multiplicative noise.

Unlike the deterministic setting, the state of the stochastic system is a
random variable, and the natural state space becomes
\(L^2(\Omega;\mathbb{R}^n)\), which is infinite-dimensional. Unless stated otherwise, all subsequent results are derived under the following nondegeneracy assumption.
\begin{assumption}\label{Nondegenerate}
	The stacked matrix 
	\[
	U := 
	\begin{bmatrix}
		B \\[2pt]
		\sigma \bar{B}
	\end{bmatrix}
	\in \mathbb{R}^{2n\times m}
	\]
	has full column rank.
\end{assumption}

This condition rules out degenerate input configurations in which the deterministic channel $B$ and the multiplicative channel $\bar B$ fail to excite some control directions. In particular, for each $P \succ 0$, it implies
\[
B^\top P B + \sigma^2 \bar{B}^\top P \bar{B} \succ 0.
\]
Thus, the quadratic forms that appear in the mean-square estimates below are strictly positive.
We first introduce the  operator associated with a norm.

\begin{definition}\label{T-sem}
	Given a norm \(\xi\) on
	\(L^2(\Omega;\mathbb{R}^n)\), define the operator
	\(\mathcal T\) by
	\begin{equation}\label{map}
		(\mathcal T\xi)(x)
		:=
		\inf_{v\in L^2_{\mathcal F_k}(\Omega;\mathbb{R}^m)}
		\xi\left(
		(A+\bar A\omega)x+(B+\bar B\omega)v
		\right),
	\end{equation}
	where \(v\) is independent of the next noise realization
	\(\omega\sim\mathcal N(0,\sigma^2)\).
	For simplicity, we write
	$
	\xi_\sharp:=\mathcal T\xi .
	$
\end{definition}

The following proposition provides computable bounds for the optimal
stabilizing rate through the operator.

\begin{proposition}\label{norm-bounds}
	Let
	$
	\xi(x)=\sqrt{\mathbb E[x^\top x]}.
	$
	Then, the following assertions hold.
	
	\begin{itemize}
		\item[(i)]
		If there exists \(\alpha\geq0\) such that
		\[
		\xi_\sharp(x)\geq\alpha\xi(x),
		\qquad
		\forall x\in L^2_{\mathcal F_k}(\Omega;\mathbb R^n),
		\]
		then
		\[
		\rho^\ast\geq\alpha .
		\]
		
		\item[(ii)]
		If there exists \(\beta\geq0\) such that
		\[
		\xi_\sharp(x)\leq\beta\xi(x),
		\qquad
		\forall x\in L^2_{\mathcal F_k}(\Omega;\mathbb R^n),
		\]
		then
		\[
		\rho^\ast\leq\beta .
		\]
	\end{itemize}
\end{proposition}

\begin{proof}
	The proof of part (i) follows directly from the definition of the operator. Indeed, for any admissible policy \(u\),
	the one-step lower bound implies
	\[
	\xi(x_{k+1})\geq \alpha \xi(x_k),
	\]
	and hence
	\[
	\xi(x_k)\geq \alpha^k\xi(x_0).
	\]
	Therefore, no admissible policy can achieve a stabilizing rate smaller
	than \(\alpha\), which yields
	\[
	\rho^\ast\geq\alpha .
	\]
	
	For part (ii), we first characterize the  optimization problem.
	Fix \(x_k\in L^2_{\mathcal F_k}(\Omega;\mathbb R^n)\). By definition of
	\(\xi_\sharp\),
	\[
	\begin{aligned}
		&\xi_\sharp^2(x_k)
		=
		\inf_{v\in L^2_{\mathcal F_k}(\Omega;\mathbb R^m)}
		\mathbb E\Big[
		\big((A+\bar A\omega_k)x_k
		+(B+\bar B\omega_k)v\big)^\top
		\big((A+\bar A\omega_k)x_k
		+(B+\bar B\omega_k)v\big)
		\Big]\\
		&=\inf_{v\in L^2_{\mathcal F_k}(\Omega;\mathbb R^m)}
		\mathbb E\Big[
		x_k^\top (A^\top A+\sigma^2\bar A^\top\bar A)x_k
		+2x_k^\top (A^\top B+\sigma^2\bar A^\top\bar B)v
		+v^\top (B^\top B+\sigma^2\bar B^\top\bar B)v
		\Big].
	\end{aligned}
	\]
	Since \(B^\top B+\sigma^2\bar B^\top\bar B\succ 0\), the minimizing admissible random control exists and is
	given by
	\[
	v^*
	=
	-\big(B^\top B+\sigma^2\bar B^\top\bar B\big)^{-1}
	\big(B^\top A+\sigma^2\bar B^\top\bar A\big)x_k .
	\]
	Substituting this optimizer into the  optimization gives
	\[
	\xi_\sharp(x_k)
	=
	\xi\big(
	(A+\bar A\omega_k)x_k
	+
	(B+\bar B\omega_k)v^*
	\big).
	\]
	By the assumed upper bound,
	\[
	\xi(x_{k+1})
	\leq
	\beta\xi(x_k).
	\]
	Applying this estimate recursively yields
	\[
	\xi(x_k)\leq\beta^k\xi(x_0),
	\qquad
	\forall k\in\mathbb N,\; 	\forall x_0\in L^2(\Omega;\mathbb R^n) .
	\]
	Therefore, the achievable mean-square exponential decay factor satisfies
	\[
	\rho^\ast\leq\beta .
	\] This completes the proof.
\end{proof}

This section extends the norm-based quantitative analysis of
\cite{Hu17} to stochastic systems with multiplicative noise.  In the next section, we develop an alternative viewpoint based on
an optimal control formulation, which leads to a Bellman-type
characterization and a constructive RNVI scheme.
\section{Optimal Control Formulation for Problem (OSR)}\label{sec:ocp-bellman}

In this section, we introduce an optimal control formulation for the original stabilizing rate problem within the feedback policy class. The purpose is to characterize the
exponential growth rate of the state energy through an optimization problem
defined over state dependent decision rules.

We first introduce the admissible feedback policies
\begin{align*}
	\mathcal U_{\mathrm{fb}}
	:=
	\Bigl\{
	u=(u_0,u_1,\dots)\ \Big|\ 
	&u_k=\mu_k(x_k),\
	\mu_k:\mathbb R^n\to\mathbb R^m,
	\mathbb E[u_k^\top u_k]<\infty, k\in\mathbb N
	\Bigr\},
\end{align*}
where $\{x_k\}$ denotes the state trajectory of system~\eqref{system} driven
by $u$. Clearly, $\mathcal U_{\mathrm{fb}}\subseteq\mathcal U$.
We emphasize that this restriction is imposed for technical reasons. Working
with feedback policies allows us to express the closed-loop trajectories in
terms of state dependent decision rules, which is needed for the optimal
control formulation below. We do not claim that feedback policies exhaust the full admissible class
\(\mathcal U\), nor that the optimal stabilizing rate over
\(\mathcal U_{\mathrm{fb}}\) coincides with that over \(\mathcal U\).
Nevertheless, Proposition~4.1 of~\cite{NI201565} shows that, 
for the
	controlled mean-field stochastic difference equations considered there,
	open-loop \(\ell^2\)-stabilizability is equivalent to closed-loop
	\(\ell^2\)-stabilizability. The present model (\ref{system}) is a special case of that
	framework with all mean-field coefficient matrices set to zero. Accordingly, in the remainder of the paper, we restrict the optimal
	stabilizing rate problem to the feedback policy class
	\(\mathcal U_{\mathrm{fb}}\). With a slight abuse of notation, its optimal
	value is still denoted by \(\rho^\ast\).

For an initial value $x_0\in L^2(\Omega;\mathbb R^n)$ with
$\mathbb E[x_0^\top x_0]>0$ and a feedback policy
$u\in\mathcal U_{\mathrm{fb}}$, define
\begin{align*}
	J(x_0,u)
	:=
	\limsup_{k\to\infty}
	\frac{1}{k}
	\log\left(
	\frac{\mathbb E[x_k^\top x_k]}
	{\mathbb E[x_0^\top x_0]}
	\right).
\end{align*}
This functional represents the asymptotic logarithmic growth rate of the
expected state energy along the trajectory generated from the prescribed
initial value $x_0$. When $J(x_0,u)<0$, the state energy decays
exponentially along this trajectory in the mean-square sense.

\medskip

\noindent\textbf{Problem (OC).} For a given initial value
$x_0\in L^2(\Omega;\mathbb R^n)$ with $\mathbb E[x_0^\top x_0]>0$, solve
\[
\begin{array}{cl}
	\inf\limits_{u \in \mathcal U_{\mathrm{fb}}}
	&
	J(x_0,u)
	=
	\limsup\limits_{k\to\infty}
	\frac{1}{k}
	\log\!\left(
	\frac{\mathbb E[x_k^\top x_k]}
	{\mathbb E[x_0^\top x_0]}
	\right),
	\\[1mm]
	\text{s.t.}
	&
	x_{k+1}
	=
	(A+\bar A\omega_k)x_k
	+
	(B+\bar B\omega_k)u_k .
\end{array}
\]

We next characterize Problem (OC) through a Bellman-type equation and
relate it to the stabilizing rate. Although \(J(x_0,u)\) is evaluated for a prescribed initial value \(x_0\),
	the Bellman-type equation below is imposed in the almost sure sense for each
	\(x_k\in L^2_{\mathcal F_k}(\Omega;\mathbb R^n)\). This formulation yields uniform
	exponential bounds over all admissible initial values and thus connects
	Problem (OC) with the global stabilizing rate under some conditions.

	\begin{theorem}\label{thm:bellman-main}
		Suppose there exist a scalar $\lambda\in\mathbb R$ and a quadratic function $h(x)=x^\top Px$ with
		$P\in\mathbb S^n_{++}$ such that, for all $x_k\in L^2_{\mathcal F_k}(\Omega;\mathbb R^n)$, the following equality holds almost surely,
		\begin{equation}\label{eq-B}
			e^\lambda h(x_k)
			=
			\inf_{\mu:\mathbb R^n\to\mathbb R^m}
			\mathbb E\left[
			h(x_{k+1})\;
			\middle|\;x_k
			\right],\;k\in\mathbb N,
		\end{equation}
		where
		\[
		x_{k+1}
		=
		(A+\bar A\omega_k)x_k
		+
		(B+\bar B\omega_k)\mu(x_k),
		\quad
		\omega_k\sim\mathcal N(0,\sigma^2).
		\]
		Then, the following statements hold.
		
		\begin{itemize}
			\item[i)] For each feedback policy $u\in\mathcal U_{\mathrm{fb}}$
			and each initial value $x_0\in L^2(\Omega;\mathbb R^n)$
			with $\mathbb E[x_0^\top x_0]>0$, it holds
			\[
			J(x_0,u)\ge \lambda,
			\qquad
			\rho(u)\ge e^{\lambda/2}.
			\]
			
			\item[ii)] If a feedback law $\mu^*$ attains the infimum
			in~\eqref{eq-B}, let $u^*_k=\mu^*(x_k)$ for all
			$k$, and define the corresponding feedback policy
			$u^*=(u_0^*,u_1^*,\dots)$. Then, for each initial value
			$x_0\in L^2(\Omega;\mathbb R^n)$ with
			$\mathbb E[x_0^\top x_0]>0$, it holds
			\[
			J(x_0,u^*)=J^*=\lambda,
			\qquad
			\rho(u^*)=\rho^*=e^{\lambda/2},
			\]
			where $J^*$ denotes the optimal value of Problem (OC). In particular, $J^*$ is independent of $x_0$.
			
		\end{itemize}
	\end{theorem}
	
	\begin{proof}
		Fix an arbitrary feedback policy \(u\in\mathcal U_{\rm fb}\) and an arbitrary
		initial value \(x_0\in L^2(\Omega;\mathbb R^n)\) with
		\(\mathbb E[x_0^\top x_0]>0\). Since
		\eqref{eq-B} holds  almost surely, the corresponding trajectory of (\ref{system})
		satisfies
		\[
		e^\lambda x_k^\top P x_k
		\le
		\mathbb E\!\left[
		x_{k+1}^\top P x_{k+1}
		\;\middle|\; x_k
		\right], \quad \forall k\in\mathbb N.
		\]
		Taking expectations and iterating this inequality yield
		\begin{equation}\label{eq-exp-growth}
			\mathbb E[x_k^\top P x_k]
			\ge
			e^{k\lambda}\mathbb E[x_0^\top P x_0],
			\qquad \forall k\in\mathbb N.
		\end{equation}
		Since $P\succ0$, we have
		\[
		\lambda_{\min}(P)x^\top x
		\le
		x^\top P x
		\le
		\lambda_{\max}(P)x^\top x .
		\]
		Combining these bounds with~\eqref{eq-exp-growth}, we obtain
		\begin{align}\label{big}
			\mathbb E[x_k^\top x_k]
			\ge
			\frac{\lambda_{\min}(P)}{\lambda_{\max}(P)}
			e^{k\lambda}
			\mathbb E[x_0^\top x_0],
			\qquad \forall k\in\mathbb N .
		\end{align}
		Hence, it holds
		\[
		\frac{1}{k}
		\log\!\left(
		\frac{\mathbb E[x_k^\top x_k]}
		{\mathbb E[x_0^\top x_0]}
		\right)
		\ge
		\lambda
		+
		\frac{1}{k}
		\log\!\left(
		\frac{\lambda_{\min}(P)}{\lambda_{\max}(P)}
		\right).
		\]
		Taking the limit superior yields
		\[
		J(x_0,u)\ge \lambda .
		\]
		Since the initial value \(x_0\) was chosen arbitrarily, this
		lower bound holds for each
		\(x_0\in L^2(\Omega;\mathbb R^n)\) with
		\(\mathbb E[x_0^\top x_0]>0\).
		
		It remains to prove the lower bound for $\rho(u)$. If the set in the
		definition of $\rho(u)$ is empty, then $\rho(u)=+\infty$, and
		inequality $\rho(u)\ge e^{\lambda/2}$ is trivial. Otherwise, take any $\bar\rho>0$ for which there
		exists $\kappa>0$ such that
		\[
		\mathbb E[x_k^\top x_k]
		\le
		\kappa \bar\rho^{2k}\mathbb E[x_0^\top x_0],
		\quad
		\forall x_0\in L^2(\Omega;\mathbb R^n),\quad
		\forall k\in\mathbb N .
		\]
		Combining this estimate with~\eqref{big} and using
		\(\mathbb E[x_0^\top x_0]>0\), we obtain
		\[
		\frac{\lambda_{\min}(P)}{\lambda_{\max}(P)}
		e^{k\lambda}
		\le
		\kappa \bar\rho^{2k},
		\quad
		\forall k\in\mathbb N.
		\]
		Taking the $k$th root and letting $k\to\infty$ yield
		\[
		e^\lambda\le \bar\rho^2.
		\]
		Since this holds for each such \(\bar\rho\), it follows that
		\[
		\rho(u)\ge e^{\lambda/2}.
		\]
		This proves statement (i).
		
		Now, suppose that $\mu^*$ attains the infimum in~\eqref{eq-B} for
		all $x_k\in L^2_{\mathcal F_k}(\Omega;\mathbb R^n)$, and let $u_k^*=\mu^*(x_k)$. Then, the preceding
		inequality (\ref{eq-exp-growth}) becomes an equality along the trajectory generated by
		$u^*$:
		\[
		\mathbb E\!\left[
		x_{k+1}^\top P x_{k+1}
		\;\middle|\; x_k
		\right]
		=
		e^\lambda x_k^\top P x_k, \quad \forall k\in\mathbb N.
		\]
		Hence, it holds
		\[
		\mathbb E[x_k^\top P x_k]
		=
		e^{k\lambda}\mathbb E[x_0^\top P x_0],
		\qquad \forall k\in\mathbb N.
		\]
		Using again the eigenvalue bounds of $P$, we obtain, for the arbitrary
		initial value \(x_0\) fixed above,
		\begin{align}\label{bbig}
			\frac{\lambda_{\min}(P)}{\lambda_{\max}(P)}
			e^{k\lambda}\mathbb E[x_0^\top x_0]
			\le
			\mathbb E[x_k^\top x_k]
			\le
			\frac{\lambda_{\max}(P)}{\lambda_{\min}(P)}
			e^{k\lambda}\mathbb E[x_0^\top x_0].
		\end{align}
		Consequently,
		\[
		J(x_0,u^*)=\lambda
		\]
		holds for each \(x_0\in L^2(\Omega;\mathbb R^n)\) with
		\(\mathbb E[x_0^\top x_0]>0\).
		Moreover, since \(x_0\) was arbitrary and the constant in the upper bound
		of~\eqref{bbig} is independent of \(x_0\), the number \(e^{\lambda/2}\)
		satisfies the defining exponential estimate for \(u^*\). Hence,
		\[
		\rho(u^*)\le e^{\lambda/2}
		\]
		holds. Combining this with statement (i), we obtain
		\[
		\rho(u^*)=e^{\lambda/2}.
		\]
		Therefore, $u^*$ attains the optimal value of Problem (OC) for each
		initial value $x_0$ with
		\(\mathbb E[x_0^\top x_0]>0\), and this value is $\lambda$, independent
		of $x_0$. It also attains the optimal stabilizing rate within the
		feedback policy class, and thus
		\[
		J^*=\lambda,
		\qquad
		\rho^*=e^{\lambda/2}.
		\]
		This completes the proof.
		\hfill $\square$
	\end{proof}
	Although \(J(x_0,u)\) may depend on the initial state for a general
	policy, Theorem~\ref{thm:bellman-main} shows that, when an optimal feedback
	law attains the infimum in equation (\ref{eq-B}), the optimal value is
	independent of \(x_0\) and satisfies
	\[
	J^*=2\log\rho^*.
	\]
	Hence, within the class of feedback policies, Problem (OC) is equivalent to Problem (OSR) under the condition of Theorem \ref{thm:bellman-main}.

\begin{remark}
	Equation~\eqref{eq-B} is not the exact Bellman equation associated with Problem (OC), since the logarithmic growth rate in $J$ does not admit a direct additive decomposition.  
	Nevertheless, it serves as a powerful surrogate: if a positive-definite quadratic solution exists, it provides a rigorous lower bound on $J(x_0,u)$ and coincides with the optimal value when the infimum is attained.  
	Hence, we shall refer to~\eqref{eq-B} as a Bellman-type equation in the remainder of the paper.
\end{remark}

For any $P\in\mathbb S^n_{++}$, define the standard Riccati-type blocks
\begin{align*}
	R(P)&:=B^\top P B+\sigma^2\,\bar B^\top P\bar B,\\
	S(P)&:=A^\top P B+\sigma^2\,\bar A^\top P\bar B,
\end{align*}
and the associated operator
\begin{align}\label{eq-Phi}
	\Phi(P):=A^\top P A+\sigma^2\,\bar A^\top P\bar A\;-\;S(P)\,R(P)^{-1}\,S(P)^\top,
\end{align}
together with the feedback gain
\begin{align}\label{eq-K}
	K(P):=R(P)^{-1}S(P)^\top.
\end{align}
Under Assumption~\ref{Nondegenerate}, $R(P)\succ0$, ensuring that the inverse in~\eqref{eq-Phi} is well-defined. 
If $P$ is only positive semi-definite, $R(P)$ may fail to be invertible;  
in this case, the operator $\Phi(P)$ is defined by replacing the inverse $R(P)^{-1}$ with the Moore-Penrose generalized inverse $R(P)^{\dagger}$, where $R(P)^\dagger$ denotes the Moore-Penrose generalized inverse (\cite{penrose55}),
\begin{align}\label{eq:Phi-dagger}
	\tilde\Phi(P)
	:= A^\top P A+\sigma^2\,\bar A^\top P\bar A
	- S(P)\,R(P)^{\dagger} S(P)^\top.
\end{align}

\begin{proposition}\label{prop-quadratic}
	The following statements are equivalent.
	\begin{itemize}
		\item [i)] There exists $(\lambda,h(\cdot))$ with $h(x)=x^\top P x$, $P\in\mathbb S^n_{++}$, satisfying the Bellman-type equation~\eqref{eq-B}.
		\item [ii)] There exists $(\gamma,P)\in \mathbb R_{+}\times \mathbb S_{++}^n$ such that 
		\begin{align}\label{eq-eig}
			\Phi(P)=\gamma P,\qquad \gamma=e^\lambda.
		\end{align}
	\end{itemize}
	Equation~\eqref{eq-eig} thus represents a \emph{nonlinear matrix eigenvalue problem}.
	%whose eigen pair $(\gamma,P)$ characterizes the exponential rate and quadratic form associated with the Bellman-type equation.
	In this case, the feedback policy achieving the infimum in~\eqref{eq-B} 
	is  $\mu^{*}(x)=-K(P)x$
	with $K(P)$ defined in~\eqref{eq-K}.  
\end{proposition}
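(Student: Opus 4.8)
The plan is to evaluate the right-hand side of the Bellman-type equation~\eqref{eq-B} in closed form and then match the two resulting quadratic forms. The first point is that, conditionally on the current state $x$, the value $\mu(x)$ is a fixed vector in $\mathbb{R}^m$, and as $\mu$ ranges over all maps $\mathbb{R}^n\to\mathbb{R}^m$ this vector ranges over all of $\mathbb{R}^m$; moreover $\omega$ is independent of $x$, so conditioning on $x$ merely fixes the realization of $x$ while leaving the law of $\omega$ intact. Hence for each fixed $x\in\mathbb{R}^n$ the right-hand side of~\eqref{eq-B} collapses to the finite-dimensional minimization
\[
\inf_{v\in\mathbb{R}^m}\ \mathbb{E}\!\left[\big((A+\bar A\omega)x+(B+\bar B\omega)v\big)^\top P\big((A+\bar A\omega)x+(B+\bar B\omega)v\big)\right],\qquad \omega\sim\mathcal N(0,\sigma^2).
\]

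Next I would expand this quadratic and take the expectation over $\omega$ using $\mathbb{E}[\omega]=0$ and $\mathbb{E}[\omega^2]=\sigma^2$, recognizing the coefficient blocks: the pure-$v$ term has Hessian $R(P)=B^\top P B+\sigma^2\bar B^\top P\bar B$, the cross term contributes $2x^\top S(P)v$ with $S(P)=A^\top P B+\sigma^2\bar A^\top P\bar B$, and the $v$-free term equals $x^\top(A^\top P A+\sigma^2\bar A^\top P\bar A)x$. Under Assumption~\ref{Nondegenerate} and $P\succ0$ we have $R(P)\succ0$, so the objective is a strictly convex quadratic in $v$ with unique minimizer $v^{*}=-R(P)^{-1}S(P)^\top x=-K(P)x$ and minimum value $x^\top\Phi(P)x$, with $\Phi(P)$ and $K(P)$ as in~\eqref{eq-Phi} and~\eqref{eq-K}. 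This simultaneously produces the closed-form right-hand side and identifies the optimal feedback $\mu^{*}(x)=-K(P)x$.

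With this computation in hand, \eqref{eq-B} reduces to the identity $e^{\lambda}\,x^\top P x=x^\top\Phi(P)x$ for all $x\in\mathbb{R}^n$. Since $P$ and $\Phi(P)$ are both symmetric, equality of the associated quadratic forms on all of $\mathbb{R}^n$ is equivalent (by polarization) to the matrix identity $\Phi(P)=e^{\lambda}P$. Setting $\gamma:=e^{\lambda}$ yields~\eqref{eq-eig}, and $\gamma=e^{\lambda}>0$ so $\gamma\in\mathbb{R}_{+}$ automatically; conversely, from any $(\gamma,P)\in\mathbb{R}_{+}\times\mathbb{S}^n_{++}$ solving~\eqref{eq-eig} we recover $\lambda=\log\gamma$. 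This establishes (i)$\Leftrightarrow$(ii). For the last assertion, $\mu^{*}(x)=-K(P)x$ attains the infimum in~\eqref{eq-B} for every $x$, and one checks $u^{*}_k=\mu^{*}(x_k)\in\mathcal U_{\mathrm{fb}}$: the closed-loop map $x\mapsto[(A-BK(P))+(\bar A-\bar B K(P))\omega]x$ is linear with Gaussian coefficients, so $x_k\in L^2$ for all $k$ by induction from $x_0$, whence $\mathbb{E}[(u^{*}_k)^\top u^{*}_k]<\infty$; Theorem~\ref{thm:bellman-lb} then makes $u^{*}$ optimal within $\mathcal U_{\mathrm{fb}}$.

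The step requiring the most care is the reduction of the functional infimum $\inf_{\mu:\mathbb{R}^n\to\mathbb{R}^m}$ inside the conditional expectation to the pointwise vector infimum $\inf_{v\in\mathbb{R}^m}$: one must argue cleanly that, because $\omega\perp x$ and conditioning fixes $x$, there is no measurability or integrability obstruction to selecting the minimizing vector for each $x$, and that the resulting selection $x\mapsto -K(P)x$ is a genuine feedback map. The remaining work — expanding the quadratic, computing the Gaussian second moments, completing the square, and the polarization step — is routine algebra.
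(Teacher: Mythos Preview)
Your proposal is correct and follows essentially the same route as the paper: expand the conditional expectation into a quadratic in $v$, invoke Assumption~\ref{Nondegenerate} with $P\succ0$ to get $R(P)\succ0$, complete the square to obtain the minimizer $-K(P)x$ and minimum $x^\top\Phi(P)x$, and then match quadratic forms. Your explicit justification of the reduction $\inf_{\mu}\leadsto\inf_{v\in\mathbb{R}^m}$ and the polarization step are more detailed than the paper's account, and your closing admissibility check for $u^\ast\in\mathcal U_{\mathrm{fb}}$ goes slightly beyond what the proposition itself requires, but none of this deviates from the paper's argument.
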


\begin{proof}
	Suppose i) holds.
	For any  state $x\in L^2_{\mathcal F_k}(\Omega;\mathbb R^n)$, consider a control law $\mu(\cdot)$ applied at this state. 
	Substituting $h(x)$ into~\eqref{eq-B} then gives the quadratic optimization problem
	\begin{align}\label{eq-quadratic-u}
		&\inf_{\mu:\,\mathbb{R}^n\to\mathbb{R}^m}\ \mathbb E\Big[ h\big((A+\bar A \omega)x+(B+\bar B \omega)\mu(x)\big)\,\big|\,x\Big] \nonumber = \inf_{\mu:\,\mathbb{R}^n\to\mathbb{R}^m} \Big\{ x^\top\!\big(A^\top P A+\sigma^2\bar A^\top P \bar A\big)x   \nonumber\\
		&\quad\quad\quad + 2x^\top\!\big(A^\top P B+\sigma^2\bar A^\top P \bar B\big)\mu(x)
		+ (\mu(x))^\top\!\big(B^\top P B+\sigma^2\bar B^\top P \bar B\big)\mu(x) \Big\}, 
	\end{align}
	where $\omega \sim \mathcal N(0,\sigma^2)$ is independent of $x$.
	By Assumption~\ref{Nondegenerate}, 
	$B^\top P B+\sigma^2\bar B^\top P\bar B\succ0$ 
	and thus the problem  has a unique minimizer
	\[
	\mu^*(x) = -K(P)x.
	\]
	Therefore, the Bellman-type equation~\eqref{eq-B} reduces exactly to the nonlinear eigenvalue problem $\Phi(P)=\gamma P$.  
	
	Conversely, suppose there exist $P\succ0$ and $\gamma>0$ such that $\Phi(P)=\gamma P$. Then, for $h(x)=x^\top P x$ and control policy $\mu^{*}(x)=-K(P)x$, we obtain
	\begin{align*}
		&\inf_{\mu:\,\mathbb{R}^n\to\mathbb{R}^m}
		\mathbb{E}\!\left[
		h\!\left( (A+\bar A \omega)x + (B+\bar B \omega)\mu(x) \right)
		\middle| x
		\right]\\
		&\quad=x^\top\Phi(P)x = \gamma h(x);
	\end{align*}
	this shows that $\Phi(P)\succeq 0$ and $(\lambda,h)$ with $\lambda=\log\gamma$ solves the Bellman-type equation~\eqref{eq-B}.  
	This proves the equivalence. \hfill $\square$
\end{proof}

This equivalence reduces the search for solutions of (\ref{eq-B}) to a nonlinear matrix eigenvalue problem of the form $\Phi(P)=\gamma P$. 
In the next section, we establish the existence of strictly positive-definite fixed points for a perturbed version of equation (\ref{eq-eig}) by introducing a regularization scheme, which also leads to computable performance bounds for $\rho^*$.

\section{Regularized Approximations and Certified Performance Bounds}\label{sec:regularization}

In this section, we develop a constructive framework for analyzing and computing the solution of a perturbed version of nonlinear eigenvalue matrix equation~\eqref{eq-eig}. The framework addresses both the existence of strictly positive-definite solutions and the computation of certified performance bounds. 
It is organized around four mutually reinforcing components.

(i) \textbf{Regularized Existence Guarantee.} 
Direct attempts to solve~\eqref{eq-eig} may lead only to semi-definite solutions. 
To overcome the possible absence of such solutions, we introduce a regularization parameter $\tau\in(0,1)$ and construct a regularized normalized operator $\hat{\Phi}_\tau$ defined in (\ref{Phi-tau}).  
The corresponding fixed points $P^{(\tau)}\succ0$ are well-defined and are subsequently used to construct
certified bounds for the optimal stabilizing rate.

(ii) \textbf{Certified Performance Bounds.} 
The fixed point $P^{(\tau)}$ provides computable information on the optimal cost and optimal stabilizing rate. 
The quadratic functions yield explicit lower bounds, while the feedback policy induced by $P^{(\tau)}$ provides computable upper bounds. 
Together, these results provide computable bounds for   $\rho^*$.

	(iii) \textbf{Certified Gap Analysis.}
	We further study the gap between the certified lower and upper bounds.
	First, we give a sufficient condition under which this gap closes as
	\(\tau\to0\). Second, we identify a necessary structural condition
	satisfied by each regular nonvanishing-gap fixed-point sequence.
	Specifically, there exists a subsequence of the regularization parameters
	and their corresponding fixed points such that the fixed points converge
	to a singular limit and the feedback gains corresponding to these fixed
	points converge to a limiting feedback matrix. The resulting limiting
	closed-loop matrices admit simultaneous block upper triangular forms,
	and their critical and dominant diagonal block pairs satisfy a strict
	mean-square spectral separation.

(iv) \textbf{Regularized Value Iteration Algorithm.} 
To compute $P^{(\tau)}$ in practice, we propose a \emph{regularized normalized value iteration} (RNVI) algorithm. 
For sufficiently large~$\tau$, we establish global contraction via explicit Lipschitz constants, and then employ a continuation strategy to extend the convergence guarantees to smaller~$\tau$.

Taken together, these elements yield a unified algorithm that produces both the regularized approximations $P^{(\tau)}$ and the certified performance bounds for the optimal stabilizing rate. 
For clarity, we summarize the constants and notations used throughout this section.  
Define
\begin{align}\label{def-C}
	R_0 := B^\top B + \sigma^2\,\bar B^\top \bar B,\quad
	C_A := \lambda_{\max}\big(A A^\top + \sigma^2\,\bar A \bar A^\top\big).
\end{align}
For a regularization parameter $\tau\in(0,1)$, set
\begin{align}\label{def-delta}
	\delta_\tau := \frac{\tfrac{\tau}{n}}{(1-\tau)C_A + \tau}.
\end{align}
We further denote the following auxiliary constants that appear in the Lipschitz analysis:
\begin{align}
	\alpha_A := \|A\|^2 + \sigma^2\|\bar A\|^2, \quad
	\alpha_S := \,\|A\|\,\|B\| + \sigma^2\|\bar A\|\,\|\bar B\|, \quad
	\alpha_R := \|B\|^2 + \sigma^2\|\bar B\|^2.
\end{align}

\subsection{Existence of Regularized Positive-Definite Solutions}

The nonlinear matrix eigenvalue equation~\eqref{eq-eig} may in general admit only semi-definite solutions.  
To overcome the possible absence of such solutions, we introduce a \emph{regularized normalization scheme}, which forms the foundation of our subsequent fixed-point analysis.
\begin{definition}\label{dam-eq}
	For $\tau\in(0,1)$, define
	\begin{align}\label{Phi-tau}
		\widehat\Phi_\tau(P):=\frac{(1-\tau)\,\Phi(P)+\tfrac{\tau}{n}I}
		{\operatorname{Tr}\!\big((1-\tau)\,\Phi(P)+\tfrac{\tau}{n}I\big)},\qquad P\in\mathbb S_{++}^n.
	\end{align}
\end{definition}
This operator interpolates between the unregularized map $\frac{\Phi}{\operatorname{Tr}\Phi}$ (as $\tau\to0$) and the uniform normalization $I/n$ (as $\tau\to1$), ensuring that its image always remains in the interior of $\mathbb{S}_{+}^n$.
We recall Brouwer’s fixed-point theorem, which provides the existence of fixed points for continuous self-maps on compact convex sets (see, e.g.,Theorem 8.1.3 of~\cite{borwein2006}). 
A self map refers to a mapping whose domain and range coincide.
\begin{theorem}[Brouwer]\label{Brouwer}
	Any continuous self map of a nonempty compact convex subset of a finite-dimensional Euclidean space has a fixed point.
\end{theorem}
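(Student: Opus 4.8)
The final statement is the classical Brouwer fixed-point theorem, which the paper invokes as a known result; for completeness here is the route I would take to prove it from first principles, transporting the problem to a standard simplex and then running a combinatorial argument. \emph{Step 1 (reduction to a simplex).} Let $C$ be a nonempty compact convex subset of a finite-dimensional Euclidean space and let $d$ be the dimension of its affine hull; after restricting to that affine hull and translating, I may assume $C \subset \mathbb{R}^d$ with $0 \in \operatorname{int} C$. With $g$ the Minkowski gauge of $C$ (so $C = \{x : g(x) \le 1\}$ and $g$ is continuous and positively homogeneous), the radial map $x \mapsto g(x)\,x/\|x\|$ for $x \ne 0$, $0 \mapsto 0$, is a homeomorphism of $C$ onto the closed unit ball, which is in turn radially homeomorphic to the standard simplex $\Delta^d = \{\lambda \in \mathbb{R}^{d+1}_{\ge 0} : \sum_i \lambda_i = 1\}$. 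Since the fixed-point property is a topological invariant — if $\phi : C \to \Delta^d$ is a homeomorphism, then $x$ is a fixed point of $f$ iff $\phi(x)$ is a fixed point of $\phi \circ f \circ \phi^{-1}$ — it suffices to prove the theorem for an arbitrary continuous $f : \Delta^d \to \Delta^d$; write $n$ for $d$ below.

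\emph{Step 2 (Sperner labeling).} Fix a sequence of triangulations $\mathcal{T}_m$ of $\Delta^n$ with mesh tending to $0$ (iterated barycentric subdivisions work, the mesh shrinking by a factor $\le n/(n+1)$ each time). For a vertex $v$ of $\mathcal{T}_m$, the identity $\sum_i f(v)_i = \sum_i v_i = 1$ forces some coordinate $i$ with $v_i > 0$ and $f(v)_i \le v_i$; assign to $v$ such a label $\ell(v) \in \{0,\dots,n\}$. Since a vertex $v$ on a boundary face $\{\lambda : \lambda_j = 0,\ j \notin I\}$ has $v_j = 0$ for $j \notin I$, the label $\ell(v)$ lies in $I$, so $\ell$ is a proper Sperner labeling. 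Sperner's lemma — which I would prove by induction on $n$ via the standard parity count, pairing each $(n-1)$-face carrying the labels $\{0,\dots,n-1\}$ with the simplices containing it (interior faces counted twice, boundary such faces counted once and all lying in the single facet $\{\lambda_n = 0\} \cong \Delta^{n-1}$, to which the inductive hypothesis applies) — then produces a fully labeled $n$-simplex $\sigma_m \in \mathcal{T}_m$ whose vertices realize all of the labels $0,\dots,n$.

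\emph{Step 3 (passing to the limit).} Write the vertices of $\sigma_m$ as $v^{(m)}_0,\dots,v^{(m)}_n$ with $\ell(v^{(m)}_i) = i$, so $f(v^{(m)}_i)_i \le (v^{(m)}_i)_i$ for every $i$. By compactness of $\Delta^n$ together with $\operatorname{diam}(\sigma_m) \to 0$, a subsequence of the $v^{(m)}_i$ converges, simultaneously for all $i$, to a common point $x^\ast \in \Delta^n$; continuity of $f$ gives $f(x^\ast)_i \le x^\ast_i$ for all $i$. Summing over $i$ and invoking $\sum_i f(x^\ast)_i = \sum_i x^\ast_i = 1$ turns each inequality into an equality, i.e. $f(x^\ast) = x^\ast$, which unwound through the homeomorphism of Step 1 yields a fixed point of the original map on $C$.

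I expect Sperner's lemma in Step 2 to be the main obstacle: building a valid labeling is routine, but the combinatorial parity induction guaranteeing a fully labeled cell — in particular the claim that boundary faces carrying labels $\{0,\dots,n-1\}$ all lie in one facet — must be executed with care. As a fallback I would instead go through the analytic no-retraction theorem (no continuous $r : \overline{B}^n \to S^{n-1}$ satisfies $r|_{S^{n-1}} = \mathrm{id}$), proved by smoothing $r$, applying Sard's theorem to obtain a regular value, and noting that $t \mapsto \int_{\overline{B}^n} \det D\big((1-t)x + t\,r(x)\big)\,dx$ is a polynomial in $t$ equal to the volume of the ball at $t=0$ yet forced to vanish at $t=1$; Brouwer then follows from the standard construction sending a fixed-point-free $f$ to the retraction obtained by shooting the ray from $f(x)$ through $x$ onto the sphere.
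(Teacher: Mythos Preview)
Your proof is correct: the reduction to a simplex via the Minkowski gauge, the Sperner labeling built from coordinates where $f(v)_i \le v_i$, and the compactness limit are all sound, and your outlined parity induction for Sperner's lemma is the standard one. The paper, however, does not prove this theorem at all --- it simply quotes it as a classical result with a citation to Borwein--Lewis (Theorem~8.1.3) and then applies it in the proof of Theorem~\ref{thm-regularized}. So you have supplied substantially more than the paper does; there is nothing to compare on the level of approach, since the paper treats Brouwer as a black box while you give a self-contained Sperner-based argument (with a no-retraction fallback). One small remark: in Step~1 you should explicitly dispose of the zero-dimensional case $d=0$ (where $C$ is a point and the result is trivial) before invoking the gauge construction, since the radial map is only defined for $d\ge 1$.
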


\begin{theorem}\label{thm-regularized}
	For each $\tau\in(0,1)$, there exists $P^{(\tau)}\succ 0$ with $\operatorname{Tr}P^{(\tau)}=1$ such that
	\begin{align}\label{tau-fix}
		\widehat\Phi_\tau(P^{(\tau)})=P^{(\tau)}.
	\end{align}
\end{theorem}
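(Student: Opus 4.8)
The plan is to apply Brouwer's fixed-point theorem (Theorem~\ref{Brouwer}) on a suitably "thickened" set of normalized positive semi-definite matrices. The full simplex $\{P\in\mathbb{S}^n_+:\operatorname{Tr}P=1\}$ is compact and convex but touches the boundary of the cone, where $R(P)$ may be singular and the Moore--Penrose inverse makes $\Phi$, hence $\widehat\Phi_\tau$, discontinuous. To bypass this I would instead work on
\[
\mathcal K_\tau:=\bigl\{\,P\in\mathbb{S}^n_+ \;:\; \operatorname{Tr}P=1,\ P\succeq\delta_\tau I\,\bigr\},
\]
with $\delta_\tau$ as in~\eqref{def-delta}. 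The first step is to check that $\mathcal K_\tau$ is nonempty, compact, and convex: nonemptiness follows from $\delta_\tau\le 1/n$ (equivalently $0\le(1-\tau)C_A$, which holds since $\tau<1$ and $C_A\ge0$), so that $I/n\in\mathcal K_\tau$; compactness and convexity are immediate, as $\mathcal K_\tau$ is the intersection of the compact convex simplex with the closed convex set $\{P\succeq\delta_\tau I\}$.

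The heart of the argument is the self-map property $\widehat\Phi_\tau(\mathcal K_\tau)\subseteq\mathcal K_\tau$. For $P\in\mathcal K_\tau$ we have $P\succ0$, so by Assumption~\ref{Nondegenerate}, $R(P)\succeq\delta_\tau R_0\succ0$, and the ordinary inverse is in force. The key structural fact is $\Phi(P)\succeq0$: writing
\[
M(P):=\begin{bmatrix}A^\top P A+\sigma^2\bar A^\top P\bar A & S(P)\\[2pt] S(P)^\top & R(P)\end{bmatrix}
=\mathbb{E}\!\left[Z_\omega^\top P\,Z_\omega\right],\qquad Z_\omega:=\bigl[\,A+\bar A\omega\ \ B+\bar B\omega\,\bigr],\ \ \omega\sim\mathcal N(0,\sigma^2),
\]
positivity of $P$ gives $M(P)\succeq0$, and $\Phi(P)$ is exactly the Schur complement of the invertible block $R(P)$, hence $\Phi(P)\succeq0$. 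Consequently the numerator of $\widehat\Phi_\tau(P)$ satisfies $(1-\tau)\Phi(P)+\tfrac{\tau}{n}I\succeq\tfrac{\tau}{n}I\succ0$. For the denominator I would use $\Phi(P)\preceq A^\top P A+\sigma^2\bar A^\top P\bar A$ together with the trace estimate $\operatorname{Tr}\!\bigl((AA^\top+\sigma^2\bar A\bar A^\top)P\bigr)\le C_A\operatorname{Tr}P=C_A$ to get $\operatorname{Tr}\Phi(P)\le C_A$, whence $\operatorname{Tr}\!\bigl((1-\tau)\Phi(P)+\tfrac{\tau}{n}I\bigr)\le(1-\tau)C_A+\tau$. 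Dividing the numerator lower bound by this denominator upper bound yields $\widehat\Phi_\tau(P)\succeq\delta_\tau I$, while $\operatorname{Tr}\widehat\Phi_\tau(P)=1$ holds by construction; thus $\widehat\Phi_\tau(P)\in\mathcal K_\tau$.

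It remains to note that $\widehat\Phi_\tau$ is continuous on $\mathcal K_\tau$: there $R(P)^{-1}$ depends rationally, hence smoothly, on $P$, so $\Phi$ is continuous, and the denominator is bounded below by $\tau>0$ (again from $\Phi(P)\succeq0$), so the quotient is continuous. Brouwer's theorem then produces a fixed point $P^{(\tau)}\in\mathcal K_\tau$; since $P^{(\tau)}\succeq\delta_\tau I$ it is strictly positive definite, and $\operatorname{Tr}P^{(\tau)}=1$ by membership in $\mathcal K_\tau$, which is precisely~\eqref{tau-fix}. I expect the main obstacle to be the self-map step — choosing the correct invariant set $\mathcal K_\tau$ and establishing the two-sided bound $0\preceq\Phi(P)\preceq A^\top P A+\sigma^2\bar A^\top P\bar A$ through the Schur-complement representation of $M(P)$; once that is secured, nonemptiness, compactness, convexity, and continuity are routine.
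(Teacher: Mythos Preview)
Your proposal is correct and follows essentially the same route as the paper's proof: both work on the invariant slice $\{P\succeq\delta_\tau I,\ \operatorname{Tr}P=1\}$, establish the self-map property via the two-sided estimate $\tfrac{\tau}{n}I\preceq(1-\tau)\Phi(P)+\tfrac{\tau}{n}I$ with $\operatorname{Tr}\Phi(P)\le C_A$, and then invoke Brouwer. Your Schur-complement justification of $\Phi(P)\succeq0$ and the explicit continuity check are actually more detailed than the paper, which states these points without further argument.
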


\begin{proof}
	For any $P\succeq0$ with $\operatorname{Tr}P=1$, note that
	\[
	\operatorname{Tr}\Phi(P)\ \le\ \operatorname{Tr}\!\big(P(AA^\top+\sigma^2\bar A \bar A^\top)\big)\ \le\ C_A
	\]
	with $C_A$ defined in (\ref{def-C}). Hence, for any $\tau\in(0,1)$, it holds
	\begin{align*}
		(1-\tau)\Phi(P)+\tfrac{\tau}{n}I \ \succeq\ \tfrac{\tau}{n}I,\quad \operatorname{Tr}\big((1-\tau)\Phi(P)+\tfrac{\tau}{n}I\big)\ \le\ (1-\tau)C_A+\tau.
	\end{align*}
	Then, the regularized operator $\widehat\Phi_\tau(P)$ satisfies
	\begin{align*}
		\lambda_{\min}\!\Big(\widehat\Phi_\tau(P)\Big)
		=\frac{\lambda_{\min}\!\big((1-\tau)\Phi(P)+\tfrac{\tau}{n}I\big)}
		{\operatorname{Tr}\!\big((1-\tau)\Phi(P)+\tfrac{\tau}{n}I\big)}
	\ge \frac{\tfrac{\tau}{n}}{(1-\tau)C_A+\tau}
		=\delta_\tau>0.
	\end{align*}
	Define the trace-normalized slice
	\[
	\mathcal S_{\delta_\tau}:=\Big\{X\in\mathbb S^n_{++}:\ X\succeq \delta_\tau I,\ \mathrm{Tr}(X)=1\Big\}.
	\]
	Since $\delta_\tau \le 1/n$, we have $I/n \in \mathcal S_{\delta_\tau}$, which shows that the set $\mathcal S_{\delta_\tau}$ is nonempty; moreover, this set is convex and compact. By its construction,  $\widehat\Phi_\tau:\mathcal S_{\delta_\tau}\to \mathcal S_{\delta_\tau}$ is a continuous mapping.  
	Thus, by Theorem~\ref{Brouwer}, there exists $P^{(\tau)}\in\mathcal S_{\delta_\tau}$ with $\widehat\Phi_\tau(P^{(\tau)})=P^{(\tau)}$. \hfill $\square$
\end{proof}

Having established the existence of strictly positive definite fixed points for each $\tau>0$, 
we now investigate the behavior as the regularization parameter tends to zero.

\begin{theorem}\label{thm:limit-Ptau}
	Let $\{\tau_j\}_{j\ge1}$ be a decreasing sequence in $(0,1)$ with
	$\tau_j\downarrow 0$. For each $\tau_j$, let $P^{(\tau_j)}$ be the
	regularized fixed point from Theorem~\ref{thm-regularized}, and define
	\[
	\gamma^{(\tau_j)}
	:=
	\operatorname{Tr}\!\Big((1-\tau_j)\Phi(P^{(\tau_j)})
	+\tfrac{\tau_j}{n}I\Big).
	\]
	Then, there exist convergent subsequences of
	\(\{P^{(\tau_j)}\}_{j\ge1}\) and
	\(\{\gamma^{(\tau_j)}\}_{j\ge1}\) (not relabelled), with
	limits $P^\ast\succeq0$ and $\gamma^\ast\in[0,C_A]$ such that
	\[
	P^{(\tau_j)} \to P^\ast,\qquad
	\gamma^{(\tau_j)} \to \gamma^\ast,\qquad
	\Phi(P^{(\tau_j)}) \to \gamma^\ast P^\ast
	\]
	as $j\to \infty$.
\end{theorem}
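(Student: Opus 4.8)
The plan is to combine compactness of the trace-one positive semidefinite slice with the \emph{a priori} bound on $\Phi$ already established in the proof of Theorem~\ref{thm-regularized}, and then to pass to the limit in the fixed-point relation~\eqref{tau-fix} rewritten in unnormalized form. Multiplying~\eqref{tau-fix} through by $\gamma^{(\tau_j)}$ gives, for each $j$,
\[
\gamma^{(\tau_j)}\,P^{(\tau_j)}
\;=\;(1-\tau_j)\,\Phi\!\big(P^{(\tau_j)}\big)+\tfrac{\tau_j}{n}I,
\qquad
\gamma^{(\tau_j)}=(1-\tau_j)\operatorname{Tr}\Phi\!\big(P^{(\tau_j)}\big)+\tau_j .
\]

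First I would collect the boundedness facts. Each $P^{(\tau_j)}$ lies in the compact set $\{X\in\mathbb S^n:\,X\succeq0,\ \operatorname{Tr}X=1\}$, so along a subsequence $P^{(\tau_j)}\to P^\ast$ with $P^\ast\succeq0$ and $\operatorname{Tr}P^\ast=1$. For the operator values, since each $P^{(\tau_j)}\succ0$, Assumption~\ref{Nondegenerate} guarantees $R(P^{(\tau_j)})\succ0$, so $\Phi(P^{(\tau_j)})$ is the genuine Schur complement, with respect to its lower-right block $R(P^{(\tau_j)})$, of the matrix $\begin{bmatrix}A&B\end{bmatrix}^\top P^{(\tau_j)}\begin{bmatrix}A&B\end{bmatrix}+\sigma^2\begin{bmatrix}\bar A&\bar B\end{bmatrix}^\top P^{(\tau_j)}\begin{bmatrix}\bar A&\bar B\end{bmatrix}$, which is positive semidefinite because $P^{(\tau_j)}\succeq0$; hence $\Phi(P^{(\tau_j)})\succeq0$. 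Together with the trace bound $\operatorname{Tr}\Phi(P^{(\tau_j)})\le C_A$ (exactly the estimate used in Theorem~\ref{thm-regularized}), this confines $\{\Phi(P^{(\tau_j)})\}$ to a compact subset of $\mathbb S^n_+$, so on a further subsequence $\Phi(P^{(\tau_j)})\to\Psi^\ast$ for some $\Psi^\ast\succeq0$ with $\operatorname{Tr}\Psi^\ast\in[0,C_A]$.

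Next I would identify the limits. By continuity of the trace, $\gamma^{(\tau_j)}\to\gamma^\ast:=\operatorname{Tr}\Psi^\ast\in[0,C_A]$. Letting $j\to\infty$ in the displayed unnormalized equation, its left-hand side converges to $\gamma^\ast P^\ast$ and its right-hand side to $\Psi^\ast$ (since $\tau_j\to0$, $1-\tau_j\to1$), whence $\Psi^\ast=\gamma^\ast P^\ast$. Thus a single subsequence delivers $P^{(\tau_j)}\to P^\ast$, $\gamma^{(\tau_j)}\to\gamma^\ast$, and $\Phi(P^{(\tau_j)})\to\gamma^\ast P^\ast$ simultaneously, and the relation $\operatorname{Tr}\Psi^\ast=\gamma^\ast\operatorname{Tr}P^\ast=\gamma^\ast$ is automatically consistent. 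The only bookkeeping is to carry out the two subsequence extractions (first for $P^{(\tau_j)}$, then for $\Phi(P^{(\tau_j)})$) in succession, after which convergence of $\gamma^{(\tau_j)}$ is immediate.

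The delicate point, and the reason the statement asserts $\Phi(P^{(\tau_j)})\to\gamma^\ast P^\ast$ rather than $\Phi(P^\ast)=\gamma^\ast P^\ast$, is that $\Phi$ need not be continuous at the boundary of $\mathbb S^n_+$: if $P^\ast$ is singular, $R(P^\ast)$ may be singular and the Moore-Penrose extension of $\Phi$ is discontinuous there, so one cannot pass the limit inside $\Phi$. The argument sidesteps this by never evaluating $\Phi$ at $P^\ast$; it extracts the limit $\Psi^\ast$ of the \emph{sequence} $\{\Phi(P^{(\tau_j)})\}$ directly from the uniform bounds $0\preceq\Phi(P^{(\tau_j)})$ and $\operatorname{Tr}\Phi(P^{(\tau_j)})\le C_A$, and then pins $\Psi^\ast$ down through the limiting fixed-point identity. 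Accordingly, the substantive work is the Schur-complement verification that $\Phi(P^{(\tau_j)})\succeq0$ together with the trace bound; the rest is routine compactness and passage to the limit.
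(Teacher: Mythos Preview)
Your argument is correct and follows essentially the same strategy as the paper: compactness of the trace-one slice plus passage to the limit in the fixed-point identity. The only organizational difference is that the paper, after extracting convergent subsequences for $P^{(\tau_j)}$ and the bounded scalars $\gamma^{(\tau_j)}$, rearranges the fixed-point relation as
\[
\Phi\big(P^{(\tau_j)}\big)=\frac{\gamma^{(\tau_j)} P^{(\tau_j)}-\tfrac{\tau_j}{n}I}{1-\tau_j}
\]
and reads off the limit $\gamma^\ast P^\ast$ directly; this bypasses your separate Schur-complement verification that $\Phi(P^{(\tau_j)})\succeq0$ and the second subsequence extraction for $\{\Phi(P^{(\tau_j)})\}$. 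Your route is perfectly valid (and the added remark on why one cannot conclude $\Phi(P^\ast)=\gamma^\ast P^\ast$ is a nice clarification), but the paper's rearrangement is marginally more economical.
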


\begin{proof}
	By Theorem~\ref{thm-regularized}, for each \(\tau\in(0,1)\), the
	regularized fixed point \(P^{(\tau)}\) satisfies
	\[
	P^{(\tau)}\succeq0,\qquad \operatorname{Tr}P^{(\tau)}=1.
	\]
	Thus, for any sequence \(\tau_j\downarrow0\), all matrices
	\(P^{(\tau_j)}\) belong to the set
	\[
	\mathcal S
	:=
	\{X\in\mathbb S^n_+:\operatorname{Tr}X=1\}.
	\]
	This set is closed and bounded in the finite-dimensional space of
	symmetric matrices, and hence compact. By the Bolzano--Weierstrass
	theorem, the sequence \(\{P^{(\tau_j)}\}_{j\ge1}\) admits a convergent
	subsequence. Passing to this subsequence and without relabeling, we may
	assume that
	\[
	P^{(\tau_j)}\to P^\ast
	\qquad\text{for some }P^\ast\in\mathcal S.
	\]
	In particular, \(P^\ast\succeq0\) and \(\operatorname{Tr}P^\ast=1\).
	We next extract a convergent subsequence for the corresponding scalars.
	By the definition of \(\gamma^{(\tau_j)}\) and the trace estimate used in
	the proof of Theorem~\ref{thm-regularized}, we have
	\begin{align}\label{gamma-bound}
		0<\gamma^{(\tau_j)}
		=
		(1-\tau_j)\operatorname{Tr}\Phi(P^{(\tau_j)})+\tau_j\le
		(1-\tau_j)C_A+\tau_j
		\le C_A+1 .
	\end{align}
	Thus, \(\{\gamma^{(\tau_j)}\}_{j\ge1}\) is bounded in \(\mathbb R\).
	Applying the Bolzano--Weierstrass theorem again, we pass if necessary to a
	further subsequence of the already selected subsequence of \(\{\tau_j\}_{j\ge1}\) and, without
	relabeling, assume that
	\[
	\gamma^{(\tau_j)}\to\gamma^\ast
	\]
	for some \(\gamma^\ast\in\mathbb R\). Since
	\(\tau_j\to0\), the above bound (\ref{gamma-bound}) also gives \(\gamma^\ast\in[0,C_A]\).
	Since this second extraction is taken from the subsequence along which
	\(P^{(\tau_j)}\to P^\ast\), the convergence of \(P^{(\tau_j)}\) is
	preserved along the same indices. Therefore, along the final subsequence, it holds
	\[
	P^{(\tau_j)}\to P^\ast,\qquad
	\gamma^{(\tau_j)}\to\gamma^\ast,\qquad
	\tau_j\to0.
	\]
	For each \(j\), the regularized fixed point relation gives
	\begin{align*}
		P^{(\tau_j)}
		=
		\frac{(1-\tau_j)\Phi(P^{(\tau_j)})+\tfrac{\tau_j}{n}I}
		{\operatorname{Tr}\!\left((1-\tau_j)\Phi(P^{(\tau_j)})
			+\tfrac{\tau_j}{n}I\right)}=
		\frac{(1-\tau_j)\Phi(P^{(\tau_j)})+\tfrac{\tau_j}{n}I}
		{\gamma^{(\tau_j)}} .
	\end{align*}
	Rearranging this identity yields
	\[
	\Phi(P^{(\tau_j)})
	=
	\frac{\gamma^{(\tau_j)}P^{(\tau_j)}
		-\tfrac{\tau_j}{n}I}{1-\tau_j}.
	\]
	Using the convergences obtained above, we have
	\[
	\gamma^{(\tau_j)}P^{(\tau_j)}
	-\tfrac{\tau_j}{n}I
	\to
	\gamma^\ast P^\ast,
	\qquad
	1-\tau_j\to1.
	\]
	Consequently, it holds
	\[
	\Phi(P^{(\tau_j)})\to\gamma^\ast P^\ast.
	\]
	This proves the claim. 
\end{proof}

\begin{remark}
	The limiting matrix $P^\ast$ may in general lie on the boundary of $\mathbb S^n_{+}$, and thus fail to be strictly positive definite. 
	Nevertheless, the regularization procedure ensures that for each $\tau>0$ the approximation $P^{(\tau)}$ remains strictly positive definite, thereby providing a family of well-conditioned surrogates approaching~$P^\ast$. As shown in the next subsection, these approximations also enable the derivation of explicit lower and upper bounds for the optimal stabilizing rate $\rho^{*}$, thus yielding computable near-optimality guarantees.
\end{remark}

\subsection{Certified Performance Bounds}

In this subsection, we leverage the regularized fixed points established in Theorem~\ref{thm-regularized} to derive the certified performance guarantees.
The quadratic functions yield universal lower bounds for the optimal stabilizing rate, 
while the feedback policies induced by the regularized fixed points provide computable upper bounds.  
Together, these results lead to two-sided estimates for the optimal stabilizing rate~$\rho^{\ast}$.

\begin{lemma}\label{lem:CW-local}
	Let $P\in\mathbb S_{++}^n$ and define
	\begin{align*}
		L(P)&:=\lambda_{\min}\big(P^{-1/2}\Phi(P)P^{-1/2}\big),\\
		U(P)&:=\lambda_{\max}\big(P^{-1/2}\Phi(P)P^{-1/2}\big),
	\end{align*}
	where $\Phi(P)$ is given by~\eqref{eq-Phi}. Then, for all $x\in\mathbb{R}^n$, it holds
	\[
	L(P)\,h_P(x)\ \le\ x^\top\Phi(P)\,x\ \le\ U(P)\,h_P(x)
	\]
	with $h_P(x)=x^\top Px$.
\end{lemma}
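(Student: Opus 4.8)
The statement is a generalized Rayleigh-quotient estimate, so the plan is to reduce it to the ordinary spectral bounds by a congruence change of variables. First I would record two preliminary observations. (a) Since $P\in\mathbb S^n_{++}$, it has a unique symmetric positive-definite square root $P^{1/2}$, which is invertible with symmetric positive-definite inverse $P^{-1/2}$. (b) The matrix $\Phi(P)$ defined in~\eqref{eq-Phi} is symmetric: $A^\top PA$ and $\sigma^2\bar A^\top P\bar A$ are symmetric because $P$ is, $R(P)=B^\top PB+\sigma^2\bar B^\top P\bar B$ is symmetric (and invertible under Assumption~\ref{Nondegenerate}), and $\big(S(P)R(P)^{-1}S(P)^\top\big)^\top = S(P)R(P)^{-1}S(P)^\top$ since $R(P)^{-1}$ is symmetric. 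Consequently $M:=P^{-1/2}\Phi(P)P^{-1/2}$ is a real symmetric matrix, and $L(P)=\lambda_{\min}(M)$, $U(P)=\lambda_{\max}(M)$ are well-defined real numbers.

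Next I would perform the substitution $y:=P^{1/2}x$, equivalently $x=P^{-1/2}y$; this is a bijection of $\mathbb R^n$ since $P^{1/2}$ is invertible. Under this change of variables, $h_P(x)=x^\top Px = y^\top P^{-1/2}PP^{-1/2}y = y^\top y = \|y\|^2$, and $x^\top\Phi(P)x = y^\top P^{-1/2}\Phi(P)P^{-1/2}y = y^\top M y$. Thus the inequality to be proved is equivalent to
\[
\lambda_{\min}(M)\,\|y\|^2 \ \le\ y^\top M y \ \le\ \lambda_{\max}(M)\,\|y\|^2,\qquad \forall\, y\in\mathbb R^n,
\]
which is exactly the classical Rayleigh-quotient bound for the symmetric matrix $M$ (obtained, e.g., by diagonalizing $M=Q\Lambda Q^\top$ with $Q$ orthogonal and estimating the diagonal form). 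Translating back through $y=P^{1/2}x$ yields $L(P)\,h_P(x)\le x^\top\Phi(P)x\le U(P)\,h_P(x)$ for every $x\in\mathbb R^n$, as claimed.

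There is essentially no hard step here: the only things that genuinely need to be checked are the symmetry of $\Phi(P)$ (so that $L(P)$ and $U(P)$ are legitimate extreme eigenvalues rather than spectral abscissas of a nonsymmetric matrix) and the invertibility of $P^{1/2}$ (so the substitution is a genuine change of variables). Once these are in place, the result is an immediate consequence of the spectral theorem applied to $M$. If one prefers to avoid introducing $P^{1/2}$ explicitly, an alternative is to argue directly that $\Phi(P)-L(P)P\succeq0$ and $U(P)P-\Phi(P)\succeq0$: indeed $\Phi(P)-L(P)P = P^{1/2}\big(M-L(P)I\big)P^{1/2}\succeq0$ because $M-L(P)I\succeq0$ and congruence by $P^{1/2}$ preserves positive semi-definiteness, and symmetrically for the upper bound; evaluating these matrix inequalities in the quadratic form $x^\top(\cdot)x$ gives the stated result.
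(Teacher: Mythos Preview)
Your proof is correct and follows essentially the same approach as the paper: substitute $y=P^{1/2}x$ to reduce the inequality to the standard Rayleigh-quotient bounds $\lambda_{\min}(M)\|y\|^2\le y^\top M y\le\lambda_{\max}(M)\|y\|^2$ for the symmetric matrix $M=P^{-1/2}\Phi(P)P^{-1/2}$. Your additional remarks on the symmetry of $\Phi(P)$ and the alternative congruence argument are sound but not needed beyond what the paper records.
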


\begin{proof}
	Set $y:=P^{1/2}x$. Then, we obtain
	\[
	x^\top\Phi(P)x \;=\; y^\top\!\big(P^{-1/2}\Phi(P)P^{-1/2}\big)y,
	\quad
	h_P(x)=y^\top y.
	\]
	The bounds follow immediately from the min-max characterization of eigenvalues.
\end{proof}

The next result shows that each quadratic test function yields a lower bound for cost $J$ and $\rho(u)$.

\begin{proposition}\label{prop:J-lower}
	For $P\in\mathbb S_{++}^n$, let $L(P)$ be given in
	Lemma~\ref{lem:CW-local}. For any admissible policy
	$u\in\mathcal U_{\mathrm{fb}}$ and any
	$x_0\in L^2(\Omega;\mathbb R^n)$ with
	$\mathbb E[x_0^\top x_0]>0$, it holds
	\[
	J(x_0,u)\ge \log L(P),
	\quad
	\rho(u)\ge\sqrt{L(P)},
	\]
	where $\log0:=-\infty$. In particular,
	\[
	\inf_{u\in\mathcal U_{\mathrm{fb}}}J(x_0,u)\ge\sup_{P\succ0}\log L(P),
	\quad
	\rho^{*}\ge\sup_{P\succ0}\sqrt{L(P)}.
	\]
\end{proposition}

\begin{proof}
	For each $x_k$, the one-step quadratic minimization defining
	$\Phi(P)$ gives
	\[
	\min_{u_k}\mathbb E\!\left[h_P(x_{k+1})\mid x_k\right]
	=x_k^\top\Phi(P)x_k
	\ge L(P)h_P(x_k).
	\]
	Hence, under any $u\in\mathcal U_{\mathrm{fb}}$, it holds
	\[
	\mathbb E[h_P(x_k)]
	\ge \big(L(P)\big)^k\mathbb E[h_P(x_0)].
	\]
	Since $P\succ0$, this implies
	\begin{align}\label{J-upp}
		\mathbb E[x_k^\top x_k]
		\ge
		\frac{\lambda_{\min}(P)}{\lambda_{\max}(P)}
		\big(L(P)\big)^k
		\mathbb E[x_0^\top x_0].
	\end{align}
	Taking the logarithmic growth rate yields
	$J(x_0,u)\ge\log L(P)$. To prove the bound for \(\rho(u)\), let \(\bar\rho\) be any rate
	admissible in Definition~\ref{def-rho-u}. Then, there exists
	\(C_{\bar\rho}>0\) such that, for each \(k\) and each 
	initial value \(x_0\in L^2(\Omega;\mathbb R^n)\),
	\[
	\mathbb E[x_k^\top x_k]
	\le
	C_{\bar\rho}\bar\rho^{2k}
	\mathbb E[x_0^\top x_0].
	\]
	Combining this estimate with the preceding lower bound (\ref{J-upp}) gives
	\[
	\frac{\lambda_{\min}(P)}{\lambda_{\max}(P)}
	\bigl(L(P)\bigr)^k
	\le
	C_{\bar\rho}\bar\rho^{2k}.
	\]
	Taking the \(k\)th root and letting \(k\to\infty\) yields
	\(L(P)\le\bar\rho^2\). Since \(\bar\rho\) is arbitrary,
	\[
	\rho(u)\ge\sqrt{L(P)}.
	\]
	Taking the infimum over $u$ and the supremum over $P\succ0$
	completes the proof. 
\end{proof}

\begin{remark}\label{rem:best-quad}
	The term $\sup_{P\succ0}L(P)$ depends only on the system matrices and represents the sharpest lower bound achievable by quadratic test functions.  
	In practice, however, directly maximizing $L(P)$ is computationally intractable.  
	Instead, one evaluates $L(P)$ at the regularized fixed points $P^{(\tau)}$ defined in~\eqref{tau-fix}, which are computable via the regularized operator $\widehat{\Phi}_\tau$.
\end{remark}

We now turn to upper bounds for the optimal stabilizing rate $\rho^*$.  
For each $\tau\in(0,1)$, recall from Definition~\ref{dam-eq} and Theorem~\ref{thm-regularized} that $\widehat{\Phi}_\tau(P^{(\tau)})=P^{(\tau)}$ admits a strictly positive-definite solution with trace one.  
Let 
\begin{align}\label{def-u-tau}
	\mu^{*}_{(\tau)}(x):=-K(P^{(\tau)})x
\end{align}
denote the corresponding one-step minimizing feedback law associated with
the minimization $\min_{u_k}\mathbb E[
h_{P^{(\tau)}}(x_{k+1})\mid x_k]$.
\begin{proposition}\label{prop:J-upper}
	For $\tau\in(0,1)$, let $P^{(\tau)}\succ0$ solve
	\eqref{tau-fix} and define
	\[
	\gamma^{(\tau)}
	:=
	\operatorname{Tr}\!\Big(
	(1-\tau)\Phi(P^{(\tau)})+\tfrac{\tau}{n}I
	\Big).
	\]
	Then, for each $x_0\in L^2(\Omega;\mathbb R^n)$ with
	$\mathbb E[x_0^\top x_0]>0$, the feedback policy
	$u_{(\tau)}^{*}=(\mu_{(\tau)}^{*}(x_0), \mu_{(\tau)}^{*}(x_1),\cdots)$ defined in~\eqref{def-u-tau} satisfies
	\[
	\inf_{u\in\mathcal U_{\mathrm{fb}}}J(x_0,u)
	\le J(x_0,u_{(\tau)}^{*})
	\le\log\!\Big(\tfrac{\gamma^{(\tau)}}{1-\tau}\Big),
	\]
	and
	\[
	\rho^*
	\le\rho(u_{(\tau)}^{*})
	\le\sqrt{\tfrac{\gamma^{(\tau)}}{1-\tau}}.
	\]
\end{proposition}
\begin{proof}
	From~\eqref{tau-fix}, for any $x$, it holds
	\[
	x^\top\Phi(P^{(\tau)})x
	=
	\frac{\gamma^{(\tau)}h_{P^{(\tau)}}(x)
		-\tfrac{\tau}{n}x^\top x}{1-\tau}
	\le
	\frac{\gamma^{(\tau)}}{1-\tau}h_{P^{(\tau)}}(x)
	\]
	with $h_{P^{(\tau)}}(x)=x^\top P^{(\tau)}x$.
	Under $u_{(\tau)}^{*}$, we have  
	\[
	\mathbb E[h_{P^{(\tau)}}(x_{k+1})\mid x_k]=x_k^\top \Phi(P^{(\tau)})x_k \le \tfrac{\gamma^{(\tau)}}{1-\tau}\ h_{P^{(\tau)}}(x_k).
	\]
	Taking expectations and iterating give
	\[
	\mathbb E[h_{P^{(\tau)}}(x_k)]
	\le
	\left(\frac{\gamma^{(\tau)}}{1-\tau}\right)^k
	\mathbb E[h_{P^{(\tau)}}(x_0)].
	\]
	Thus, it holds
	\[
	J(x_0,u_{(\tau)}^{*})
	\le
	\log\!\Big(\tfrac{\gamma^{(\tau)}}{1-\tau}\Big).
	\]
	Since \(P^{(\tau)}\succ0\), for each
	\(x_0\in L^2(\Omega;\mathbb R^n)\), the same estimate gives
	\[
	\mathbb E[x_k^\top x_k]
	\le
	\frac{\lambda_{\max}(P^{(\tau)})}
	{\lambda_{\min}(P^{(\tau)})}
	\left(\frac{\gamma^{(\tau)}}{1-\tau}\right)^k
	\mathbb E[x_0^\top x_0].
	\]
	Both
	\[
	\frac{\lambda_{\max}(P^{(\tau)})}
	{\lambda_{\min}(P^{(\tau)})}
	\quad\text{and}\quad
	\frac{\gamma^{(\tau)}}{1-\tau}
	\]
	are independent of \(k\) and \(x_0\).
	Therefore,
	\(\sqrt{\frac{\gamma^{(\tau)}}{1-\tau}}\) is admissible in
	Definition~\ref{def-rho-u} for $u_{(\tau)}^{*}$, and hence
	$
	\rho(u_{(\tau)}^{*})
	\le
	\sqrt{\tfrac{\gamma^{(\tau)}}{1-\tau}}.
	$
	The remaining inequalities follow directly from the definitions of
	\(\inf_{u\in\mathcal U_{\mathrm{fb}}}J(x_0,u)\) and \(\rho^*\). 
\end{proof}

The following theorem summarizes the main result of this paper, 
providing explicit two-sided bounds that certify the optimal  stabilizing rate.
\begin{theorem}\label{cor:certified}
	For any $\tau\in(0,1)$ and any solution
	$P^{(\tau)}\succ0$ of~\eqref{tau-fix}, 
	the optimal stabilizing rate satisfy
	\begin{align*}
		\sqrt{L\big(P^{(\tau)}\big)}
		&\le \rho^*
		\le \sqrt{\tfrac{\gamma^{(\tau)}}{1-\tau}}.
	\end{align*}
	Here, $L(P^{(\tau)})$ is defined in
	Lemma~\ref{lem:CW-local}, and $\gamma^{(\tau)}$ is defined in
	Proposition~\ref{prop:J-upper}.
\end{theorem}

\subsection{Certified Gap Analysis.}

Having obtained the computable upper and lower bounds for $\rho^*$ in Theorem~\ref{cor:certified}, 
we next analyze how tight these bounds are as $\tau$ varies.

\begin{theorem}\label{thm:gap}
	For any $\tau\in(0,1)$ and any solution $P^{(\tau)}$ of~\eqref{tau-fix}, define
	\[
	L_\tau := L\big(P^{(\tau)}\big),\quad
	U_\tau := \frac{\gamma^{(\tau)}}{1-\tau},\quad
	\Delta_\tau := U_\tau - L_\tau .
	\]
	Then, the following statements hold.
	\begin{itemize}
		\item [i)] The gap between the squared certified bounds for $\rho^*$ in Theorem~\ref{cor:certified} admits the explicit expression
		\begin{align}\label{gap-em}
			\Delta_\tau
			= \frac{\tau}{n(1-\tau)}\,\lambda_{\max}\big((P^{(\tau)})^{-1}\big).
		\end{align}
		\item [ii)] Let $\{\tau_k\}_{k\ge1}\subset(0,1)$ with $\tau_k\downarrow0$, and let
		$P^*\succeq0$ be a limit point of $\{P^{(\tau_k)}\}$ as characterized in
		Theorem~\ref{thm:limit-Ptau}.
		If the limit satisfies $P^*\succ0$, then the certified squared bound becomes
		asymptotically tight along the corresponding convergent subsequence:
		\[
		\Delta_{\tau_k}\ \longrightarrow\ 0
		\qquad\text{as }k\to\infty.
		\]
	\end{itemize}
\end{theorem}

\begin{proof}
	Fix $\tau\in(0,1)$ and a corresponding fixed point $P^{(\tau)}\succ0$ of~\eqref{tau-fix}.
	By the fixed-point relation~\eqref{tau-fix}, it holds
	\[
	(1-\tau)\,\Phi(P^{(\tau)})+\tfrac{\tau}{n}I = \gamma^{(\tau)} P^{(\tau)}.
	\]
	Multiplying by $({P^{(\tau)}})^{-1/2}$ on both sides of above equation and rearranging it yield
	\[
	(1-\tau)\,({P^{(\tau)}})^{-1/2}\Phi(P^{(\tau)})({P^{(\tau)}})^{-1/2}
	= \gamma^{(\tau)} I - \tfrac{\tau}{n}(P^{(\tau)})^{-1}.
	\]
	Hence, the eigenvalues of $({P^{(\tau)}})^{-1/2}\Phi(P^{(\tau)})({P^{(\tau)}})^{-1/2}$ are
	\[
	\lambda_i = \frac{\gamma^{(\tau)}}{1-\tau}
	- \frac{\tau}{n(1-\tau)}\,\mu_i,\quad i=1,\cdots,n,
	\]
	where $\mu_i$ is the corresponding eigenvalues of $(P^{(\tau)})^{-1}$. 
	Therefore, it holds
	\[
	L_\tau = \min_i \lambda_i
	= \frac{\gamma^{(\tau)}}{1-\tau} - \frac{\tau}{n(1-\tau)}\,\lambda_{\max}((P^{(\tau)})^{-1}),
	\]
	which proves part~(i).
	
	For part~(ii),  
	if $P^*\succ0$, then $\lambda_{\min}(P^{(\tau_k)})\to\lambda_{\min}(P^*)>0$ and hence
	$\lambda_{\max}\big((P^{(\tau_k)})^{-1}\big)$ stays bounded.  
	Since $\tau_k/(1-\tau_k)\to0$, it holds
	\[
	\Delta_{\tau_k}
	=\frac{\tau_k}{n(1-\tau_k)}\,\lambda_{\max}\big((P^{(\tau_k)})^{-1}\big)
	\longrightarrow 0 .
	\]
	We complete the proof.
	
\end{proof}

	The same result also implies gap closing on the original \(\rho\)-scale,
	because
	\[
	0\le \sqrt{U_\tau}-\sqrt{L_\tau}
	\le \sqrt{\Delta_\tau}.
	\]

\begin{remark}\label{rem:gap}
	Theorem~\ref{thm:gap} shows that the certified squared gap is governed by
	(\ref{gap-em}). Thus, the asymptotic tightness follows if
	\(\lambda_{\min}(P^{(\tau)})\) decays slower than \(\tau\). On the other
	hand, the regularization term in the fixed point equation (\ref{dam-eq}), together with
	the boundedness of \(\gamma^{(\tau)}\), prevents
	\(\lambda_{\min}(P^{(\tau)})\) from decaying faster than the order of
	\(\tau\). Therefore, a nonvanishing certified gap corresponds to the
	critical case where
	\[
	\lambda_{\min}(P^{(\tau)})\asymp \tau .
	\]
	This motivates the structural diagnosis below.
\end{remark}

Theorem~\ref{thm:gap} provides a sufficient condition for the certified
gap to close. For completeness, we next consider the complementary case
and identify necessary structural features of sequences of regularized
fixed points along which the certified gap does not vanish. 
Let $\{\tau_k\}_{k\ge1}\subset(0,1)$ with $\tau_k\downarrow0$.
For each \(k\), let \(P_k\) be a fixed point of
\(\widehat{\Phi}_{\tau_k}\), satisfying
$
\widehat{\Phi}_{\tau_k}(P_k)=P_k.
$
Denote the corresponding feedback gain and certified squared gap by
\[
K_k:=K(P_k),
\qquad
\Delta_k:=
\frac{\tau_k}
{n(1-\tau_k)\lambda_{\min}(P_k)},
\]
where \(K(P)\) is defined in~\eqref{eq-K}.
If
$
\liminf_{k\to\infty}\Delta_k>0,
$
then every limit point of \(\{P_k\}_{k\ge1}\) is singular. The additional
regularity conditions on the feedback gains, the eigenvalue scales of
\(P_k\), and the associated spectral subspaces are specified below.

\begin{definition}\label{regu}
	The pair sequence
	$
	\{(\tau_k,P_k)\}_{k\ge1}
	$
	introduced above is called a \emph{regular nonvanishing-gap fixed-point
		sequence}, if
	\[
	\liminf_{k\to\infty}\Delta_k>0,\;\{K_k\}_{k\ge1}\ \text{is bounded},
	\]
	and the eigenvalues of \(P_k\) have the following three-layer
	asymptotic structure. Specifically, let
	\[
	0<\lambda_{1,k}\le\cdots\le\lambda_{n,k}
	\]
	denote the eigenvalues of \(P_k\); there exist integers
	\(1\le r\le\ell\le n-1\) such that the following structural results of  eigenvalues hold.
	
	\emph{1) Critical layer of eigenvalues.}
	The first \(r\) eigenvalues are of the same order as \(\tau_k\):
	\[
	\lambda_{i,k}\asymp\tau_k,
	\quad
	i=1,\ldots,r.
	\]
	Equivalently, there exist constants \(0<c\leq C<\infty\) such that, for all sufficiently large \(k\),
	\[
	c\tau_k
	\le
	\lambda_{i,k}
	\le
	C\tau_k,
	\quad
	i=1,\ldots,r.
	\]
	
	\emph{2) Intermediate vanishing layer of eigenvalues.}
	If \(r<\ell\), the eigenvalues
	\(\lambda_{r+1,k},\ldots,\lambda_{\ell,k}\) converge to zero but decay
	more slowly than \(\tau_k\):
	\[
	\lim_{k\to\infty}\lambda_{i,k}=0,
	\quad
	\lim_{k\to\infty}
	\frac{\lambda_{i,k}}{\tau_k}
	=\infty,
	\quad
	i=r+1,\ldots,\ell.
	\]
	If \(r=\ell\), this layer is absent.
	
	\emph{3) Dominant layer of eigenvalues.}
	The remaining eigenvalues stay away from zero:
	\[
	\liminf_{k\to\infty}\lambda_{i,k}>0,
	\quad
	i=\ell+1,\ldots,n.
	\]
	
	\emph{4) Spectral subspace convergence}. Let \(\Pi_k^{\rm c}\) and \(\Pi_k^{\rm v}\) denote the spectral
	projection matrices of \(P_k\) associated with its first \(r\) and first
	\(\ell\) eigenvalues, respectively. It is further required that there
	exist orthogonal projection matrices \(\Pi^{\rm c}\) and \(\Pi^{\rm v}\) such
	that
	\[
	\lim_{k\to\infty}
	\|\Pi_k^{\rm c}-\Pi^{\rm c}\|=0,
	\qquad
	\lim_{k\to\infty}
	\|\Pi_k^{\rm v}-\Pi^{\rm v}\|=0.
	\]
\end{definition}
\begin{remark}
	The condition \(P^*\succ0\) in ii) of Theorem~\ref{thm:gap} means that
	all eigenvalues remain bounded away from zero. In the layer notation
	of Definition~\ref{regu}, this corresponds to the degenerate case
	\[
	r=\ell=0,\qquad q=n,
	\]
	and the certified gap closes. More generally, \eqref{gap-em} shows
	that
	\[
	\lim_{k\to\infty}\frac{\lambda_{\min}(P_k)}{\tau_k}=\infty
	\]
	also implies \(\lim_{k\to\infty}\Delta_k=0\). This corresponds to
	\(r=0\), while \(\ell\) may range from \(0\) to \(n-1\); hence, an intermediate vanishing layer may still be present. The spectral subspace convergence in
	Definition~\ref{regu} guarantees the existence of limiting critical and
	vanishing spectral subspaces, which are used to construct the orthogonal matrix \(T=[X\ Y\ Q]\) in
	Theorem~\ref{thm:regular-gap-structure} below.
\end{remark}
\medskip
For a matrix pair \((F,\bar F)\), define the squared mean-square spectral
radius by
\[
\varrho_{\rm ms}(F,\bar F)
:=
\operatorname{spr}\big(F\otimes F+\sigma^2\bar F\otimes\bar F\big).
\]

\begin{theorem}%[Necessary structure of a regular nonvanishing certified gap]
	\label{thm:regular-gap-structure}
	Let \(\{(\tau_k,P_k)\}_{k\ge1}\) be a regular
	nonvanishing-gap fixed-point sequence. Then, there exist a subsequence
	$
	\{(\tau_{k_j},P_{k_j})\}_{j\ge1},
	$
	a feedback matrix \(K^*\) satisfying
	$
	\lim_{j\to\infty}K_{k_j}=K^*,
	$ an orthogonal matrix
	$
	T=[X\ Y\ Q],
	$
	and integers
	\[
	r\ge1,\; s\ge0,\; \ell\ge 1,\; q\ge1,\quad r+s+q=n,\; s=\ell-r,
	\]
	such that the closed-loop matrices
	$
	A_{K^*}=A-BK^*, \bar A_{K^*}=\bar A-\bar B K^*
	$
	admit the following simultaneous block upper triangular forms under
	the same orthogonal transformation \(T\):
	\[
	T^\top\! A_{K^*}\!T
	\!=\!
	\begin{pmatrix}
		A_c & A_{cb} & A_{cd}\\
		0 & A_b & A_{bd}\\
		0 & 0 & A_d
	\end{pmatrix},
	T^\top\! \bar A_{K^*}\!T
	\!=\!
	\begin{pmatrix}
		\bar A_c & \bar A_{cb} & \bar A_{cd}\\
		0 & \bar A_b & \bar A_{bd}\\
		0 & 0 & \bar A_d
	\end{pmatrix}.
	\]
	Here,
	$
	A_c,\bar A_c\in\mathbb R^{r\times r},
	A_b,\bar A_b\in\mathbb R^{s\times s},
	A_d,\bar A_d\in\mathbb R^{q\times q},
	$
	and
	$
	X\in\mathbb R^{n\times r},
	Y\in\mathbb R^{n\times s},
	Q\in\mathbb R^{n\times q}.
	$
	Accordingly, \((A_c,\bar A_c)\) and \((A_d,\bar A_d)\) are referred
	to as the critical diagonal block and the dominant diagonal block, respectively.
	Moreover, these two blocks exhibit a strict mean square spectral
	separation:
	\begin{equation}\label{eq:strict-ms-separation}
		\varrho_{\rm ms}(A_c,\bar A_c)
		<
		\varrho_{\rm ms}(A_d,\bar A_d).
	\end{equation}
\end{theorem}
The proof of this theorem is provided in Appendix~\ref{sec:large-gap-diagnosis}.

\begin{corollary}\label{up}
	For the subsequence
	\(\{(\tau_{k_j},P_{k_j})\}_{j\ge1}\) and the feedback matrix
	\(K^*\) given in
	Theorem~\ref{thm:regular-gap-structure}, it holds
	\[
	\lim_{j\to\infty}U_{\tau_{k_j}}
	=
	\varrho_{\rm ms}(A_d,\bar A_d),
	\]
	where \(U_\tau\) is defined in Theorem~\ref{thm:gap}.
\end{corollary}
\begin{proof}
	By the definition of \(U_\tau\),
	\[
	U_{\tau_{k_j}}
	=
	\frac{\gamma_{k_j}}{1-\tau_{k_j}}.
	\]
	Since
	\[
	\lim_{j\to\infty}\tau_{k_j}=0,
	\qquad
	\lim_{j\to\infty}\gamma_{k_j}=\gamma^*,
	\]
	we obtain
	\[
	\lim_{j\to\infty}U_{\tau_{k_j}}
	=
	\gamma^*.
	\]
	Moreover, relation~(R6) established in the proof of
	Theorem~\ref{thm:regular-gap-structure} gives
	\[
	\gamma^*
	=
	\varrho_{\rm ms}(A_d,\bar A_d).
	\]
	Therefore,
	\[
	\lim_{j\to\infty}U_{\tau_{k_j}}
	=
	\varrho_{\rm ms}(A_d,\bar A_d).
	\]
	This completes the proof.
\end{proof}
\medskip

For the subsequence \(\{(\tau_{k_j},P_{k_j})\}_{j\ge1}\) given in Theorem~\ref{thm:regular-gap-structure}, the proof yields \[ \lim_{j\to\infty}K_{k_j}=K^*, \qquad \lim_{j\to\infty}P_{k_j}=P^*. \]
The orthogonal matrix
\(T=[X\ Y\ Q]\) in Theorem~\ref{thm:regular-gap-structure} is constructed
in the proof from the limiting spectral subspaces of \(P_{k_j}\).
Specifically, the columns of \(X\) span the limiting spectral subspace
associated with the first \(r\) eigenvalues  of \(P_{k_j}\) as $\tau_{k_j}\downarrow0$, while the
columns of \([X\ Y]\) span the limiting spectral subspace
associated with its first \(\ell\) eigenvalues. Since the first \(\ell\) eigenvalues
converge to zero and the remaining eigenvalues are uniformly bounded away
from zero, we have
$
\ker P^*=\operatorname{Im}[X\ Y].
$
Moreover, the columns of \(Q\) span \((\ker P^*)^\perp\). Therefore, it holds
\[
\mathcal W\subseteq\mathcal V\subsetneq\mathbb R^n,~
\mathcal W:=\operatorname{Im}X,
~
\mathcal V:=\ker P^*=\operatorname{Im}[X\ Y].
\]
The construction in the proof also shows that both \(\mathcal W\) and
\(\mathcal V\) are invariant under \(A_{K^*}\) and
\(\bar A_{K^*}\). With respect to the ordered orthonormal matrix
\(T=[X\ Y\ Q]\), these nested invariance relations force all blocks
below the diagonal to vanish. This gives the simultaneous block upper
triangular forms of \(A_{K^*}\) and \(\bar A_{K^*}\) stated in
Theorem~\ref{thm:regular-gap-structure}.

Consequently, a necessary structural feature of any regular
nonvanishing-gap fixed-point sequence is the existence of a subsequence
\(\{(\tau_{k_j},P_{k_j})\}_{j\ge1}\) and a limiting feedback matrix
\(K^*\) such that
$
\lim_{j\to\infty}K_{k_j}=K^*,
$
and the corresponding closed-loop matrices \(A_{K^*}\) and
\(\bar A_{K^*}\) admit the simultaneous block upper triangular
decompositions stated in Theorem~\ref{thm:regular-gap-structure}. The
critical and dominant diagonal block pairs in these decompositions satisfy
the strict mean-square spectral separation in~\eqref{eq:strict-ms-separation}.
Furthermore, Corollary~\ref{up} shows that the dominant diagonal block
pair determines the limiting value of the squared upper bound $\lim_{j\to\infty}U_{\tau_{k_j}}$.

\subsection{Regularized Normalized Value Iteration }

Having established the existence of regularized fixed points in Theorem~\ref{thm-regularized}, we now turn to a practical procedure for computing them. 
We propose a \emph{regularized normalized value iteration} (RNVI), a fixed-point scheme for solving fixed-point problem $\widehat\Phi_\tau(P^{(\tau)})=P^{(\tau)}$. 

Given $\tau\in(0,1)$ and an initial $P_0\in\mathbb S^n_{++}$ with $\operatorname{Tr}P_0=1$, define the iteration
\[
P_{k+1}\ :=\ \widehat\Phi_\tau(P_k),\qquad k\in\mathbb{N}.
\]
We next derive explicit constants leading to a global contraction condition and, consequently, to linear convergence.
The next result bounds the Lipschitz constant of $\Phi$ on interior slices, which is the key step toward a contraction argument.

\begin{lemma}\label{lem:Lip-Phi}
	Let $a>0$ and define $c_R(a):=(a\,\lambda_{\min}(R_0))^{-1}$. 
	Then, for any $P,Q\in\mathcal S_a:=\{X\in\mathbb S^n_{++}: X\succeq aI,\ \operatorname{Tr}X=1\}$, it holds
	\begin{align*}
		\|\Phi(P)-\Phi(Q)\|\ \le\ L_\Phi(a)\,\|P-Q\|,
		\quad L_\Phi(a):=\alpha_A+2\alpha_S^2 c_R(a)+\alpha_S^2\alpha_R\,c_R(a)^2.
	\end{align*}
	Moreover,
	\[
	\sup_{P\in\mathcal S_a}\ \|\Phi(P)\|\ \le\ C_\Phi(a):=\alpha_A+\alpha_S^2\,c_R(a)
	\] holds.
\end{lemma}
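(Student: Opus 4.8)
The plan is to regard $\Phi$ as built from three maps that are \emph{linear} in $P$ — namely $\mathcal A(P):=A^\top P A+\sigma^2\bar A^\top P\bar A$, $S(P)$, and $R(P)$ — together with one matrix inversion, and to assemble both estimates from submultiplicativity of the spectral norm plus two uniform a priori bounds valid on $\mathcal S_a$. First I would record the elementary operator-norm bounds $\|\mathcal A(H)\|\le\alpha_A\|H\|$, $\|S(H)\|\le\alpha_S\|H\|$, $\|R(H)\|\le\alpha_R\|H\|$ for arbitrary symmetric $H$, which follow directly from the triangle inequality and $\|X^\top HY\|\le\|X\|\,\|H\|\,\|Y\|$. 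Then I would specialize to the slice: on $\mathcal S_a$ one has $X\succeq0$ and $\operatorname{Tr}X=1$, hence $\|X\|=\lambda_{\max}(X)\le\operatorname{Tr}X=1$, so $\|\mathcal A(P)\|\le\alpha_A$ and $\|S(P)\|\le\alpha_S$; and $P\succeq aI$ forces $R(P)=B^\top PB+\sigma^2\bar B^\top P\bar B\succeq a(B^\top B+\sigma^2\bar B^\top\bar B)=aR_0$, hence $\|R(P)^{-1}\|\le(a\lambda_{\min}(R_0))^{-1}=c_R(a)$. With these in hand the boundedness claim is one line: for $P\in\mathcal S_a$,
\[
\|\Phi(P)\|\ \le\ \|\mathcal A(P)\|+\|S(P)\|\,\|R(P)^{-1}\|\,\|S(P)^\top\|\ \le\ \alpha_A+\alpha_S^2 c_R(a)\ =\ C_\Phi(a).
\]

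For the Lipschitz estimate I would split $\Phi(P)-\Phi(Q)=\mathcal A(P-Q)-\big[S(P)R(P)^{-1}S(P)^\top-S(Q)R(Q)^{-1}S(Q)^\top\big]$; the first term contributes at most $\alpha_A\|P-Q\|$ by linearity. For the difference of the Schur-complement terms I would use the centered telescoping identity
\begin{align*}
S(P)R(P)^{-1}S(P)^\top-S(Q)R(Q)^{-1}S(Q)^\top
&=\big(S(P)-S(Q)\big)R(P)^{-1}S(P)^\top\\
&\quad+S(Q)\big(R(P)^{-1}-R(Q)^{-1}\big)S(P)^\top\\
&\quad+S(Q)R(Q)^{-1}\big(S(P)-S(Q)\big)^\top,
\end{align*}
bounding the two outer summands by $\alpha_S\cdot c_R(a)\cdot\alpha_S\,\|P-Q\|$ each (using $\|S(P)-S(Q)\|=\|S(P-Q)\|\le\alpha_S\|P-Q\|$), and the middle summand through the resolvent identity $R(P)^{-1}-R(Q)^{-1}=R(P)^{-1}\big(R(Q)-R(P)\big)R(Q)^{-1}$, which gives $\|R(P)^{-1}-R(Q)^{-1}\|\le c_R(a)^2\,\|R(P-Q)\|\le c_R(a)^2\alpha_R\|P-Q\|$ and hence a contribution of $\alpha_S^2\alpha_R c_R(a)^2\|P-Q\|$. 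Summing the four pieces yields $\|\Phi(P)-\Phi(Q)\|\le\big(\alpha_A+2\alpha_S^2 c_R(a)+\alpha_S^2\alpha_R c_R(a)^2\big)\|P-Q\|=L_\Phi(a)\|P-Q\|$.

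I do not expect a genuine obstacle here: the argument is a routine perturbation estimate for a Riccati-type operator. The only points that need care are (a) making the telescoping decomposition \emph{centered}, so that in every summand the factors $S(\cdot)$ and $R(\cdot)^{-1}$ are evaluated at a point of $\mathcal S_a$ (either $P$ or $Q$), which is exactly what licenses the uniform bounds $\|S(\cdot)\|\le\alpha_S$ and $\|R(\cdot)^{-1}\|\le c_R(a)$; and (b) observing that the crucial lower bound $R(P)\succeq aR_0$ uses only $P\succeq aI$, whereas $\|P\|\le1$ uses only $\operatorname{Tr}P=1$ together with $P\succeq0$, so both hypotheses defining $\mathcal S_a$ are genuinely needed and are used in the right places.
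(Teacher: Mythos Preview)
Your proposal is correct and follows essentially the same approach as the paper: the same a priori bounds on $\mathcal S_a$ ($\|P\|\le1$, $R(P)\succeq aR_0$), the same splitting of $\Phi(P)-\Phi(Q)$ into the affine part and the Schur-complement difference, the same three-term centered telescoping of the latter, and the same resolvent identity for $R(P)^{-1}-R(Q)^{-1}$. The only cosmetic difference is that you name the linear maps $\mathcal A,S,R$ explicitly up front, which makes the structure slightly clearer.
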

\begin{proof}
	Let \(a>0\) and \(P,Q\in\mathcal S_a\). 
	Since $P\succeq aI$, we have
	\[
	R(P)\succeq aR_0 \quad\Rightarrow\quad \|R(P)^{-1}\|\le c_R(a),
	\]
	and the same holds for $R(Q)^{-1}$. 
	Moreover, it holds
	\begin{align*}
		\|S(P)-S(Q)\| \le\ \alpha_S\,\|P-Q\|, \quad \|R(P)-R(Q)\| \le\ \alpha_R\,\|P-Q\|,
	\end{align*}
	and $\|S(P)\|,\|S(Q)\|\le \alpha_S$ since $\operatorname{Tr}P=\operatorname{Tr}Q=1$.
	Decompose
	\begin{align*}
		&\Phi(P)-\Phi(Q)
		=A^\top(P-Q)A+\sigma^2\,\bar A^\top(P-Q)\bar A\quad-\Big(S(P)R(P)^{-1}S(P)^\top-S(Q)R(Q)^{-1}S(Q)^\top\Big).
	\end{align*}
	The affine part is bounded by $\alpha_A\|P-Q\|$. 
	For the nonlinear term, inserting and subtracting intermediate expressions yield
	\begin{align*}
		&\|S(P)R(P)^{-1}S(P)^\top-S(Q)R(Q)^{-1}S(Q)^\top\|\le\big(2\alpha_S^2 c_R(a)+\alpha_S^2\alpha_R c_R(a)^2\big)\|P-Q\|.
	\end{align*}
	Combining the two estimates gives the Lipschitz bound.
	Finally, for any $P\in\mathcal S_a$, it holds
	\begin{align*}
		&\|\Phi(P)\|
		\le \|A^\top P A+\sigma^2\bar A^\top P\bar A\|
		\quad+\|S(P)R(P)^{-1}S(P)^\top\|\le \alpha_A+\alpha_S^2 c_R(a),
	\end{align*}
	which completes the proof. 
\end{proof}

We can now control the Lipschitz constant of the regularized normalized operator $\widehat\Phi_\tau$.

\begin{theorem}\label{thm:RNVI-global}
	Fix $\tau\in(0,1)$ and let $\delta_\tau$ be as in (\ref{def-delta}).
	For all $P,Q\in
	\mathcal S_{\delta_\tau}=\{X\in\mathbb S^n_{++}:\ X\succeq \delta_\tau I,\ \mathrm{Tr}(X)=1\}
	$, it holds
	\[
	\|\widehat\Phi_\tau(P)-\widehat\Phi_\tau(Q)\|\ \le\ \Lambda(\tau)\,\|P-Q\|
	\]
	with the constant
	\begin{align*}
		\Lambda(\tau)\ :=&\ \frac{(1-\tau)\,L_\Phi(\delta_\tau)}{\tau}
	+
		\frac{(1-\tau)\,\big((1-\tau)\,C_\Phi(\delta_\tau)+\tfrac{\tau}{n}\big)\,n\,L_\Phi(\delta_\tau)}{\tau^2},
	\end{align*}
	where $L_\Phi$ and $C_\Phi$ are defined in Lemma $\ref{lem:Lip-Phi}$. If $\Lambda(\tau)<1$, the following statements hold.
	\begin{itemize}
		\item [i)] $\widehat\Phi_\tau$ is a contraction mapping on $(\mathcal S_{\delta_\tau},\|\cdot\|)$. 
		\item [ii)] There exists a \emph{unique} fixed point $P^{(\tau)}\in \mathcal S_{\delta_\tau}$ with $\widehat\Phi_\tau(P^{(\tau)})=P^{(\tau)}$.
		\item [iii)]  The sequence $(P_k)_{k\in\mathbb N}$ generated by RNVI 
		converges globally and linearly to $P^{(\tau)}$:
		\[
		\|P_{k}-P^{(\tau)}\|\ \le\ \Lambda(\tau)^k\,\|P_0-P^{(\tau)}\|,\qquad k\in \mathbb{N}.
		\]
	\end{itemize}
	
\end{theorem}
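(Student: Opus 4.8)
The plan is to separate the statement into its quantitative core --- the Lipschitz estimate for $\widehat\Phi_\tau$ on the slice $\mathcal S_{\delta_\tau}$ --- and the routine fixed-point consequences that follow once $\Lambda(\tau)<1$. First I would prove the Lipschitz bound unconditionally; then, under the hypothesis $\Lambda(\tau)<1$, statement (i) is immediate from that bound, the implications (i)$\Rightarrow$(ii)$\Rightarrow$(iii) are the Banach contraction principle together with a direct iteration, and the remaining implications needed to close the equivalence hold because under $\Lambda(\tau)<1$ all three assertions are simultaneously true.

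For the Lipschitz estimate, write $N(P):=(1-\tau)\Phi(P)+\tfrac{\tau}{n}I$ and $t(P):=\operatorname{Tr}N(P)$, so $\widehat\Phi_\tau(P)=N(P)/t(P)$, and use the quotient identity $\widehat\Phi_\tau(P)-\widehat\Phi_\tau(Q)=\dfrac{N(P)-N(Q)}{t(P)}+N(Q)\,\dfrac{t(Q)-t(P)}{t(P)\,t(Q)}$. This reduces the bound to three ingredients. (a) A uniform lower bound $t(P)\ge\tau$: on $\mathcal S_{\delta_\tau}$ one has $P\succeq0$ with $\operatorname{Tr}P=1$, and the Schur-complement representation of $\Phi$ (valid since $R(P)\succ0$ under Assumption~\ref{Nondegenerate}) gives $\Phi(P)\succeq0$, hence $N(P)\succeq\tfrac{\tau}{n}I$ and $t(P)\ge\tau$. (b) Numerator control via Lemma~\ref{lem:Lip-Phi}: $\|N(P)-N(Q)\|=(1-\tau)\|\Phi(P)-\Phi(Q)\|\le(1-\tau)L_\Phi(\delta_\tau)\|P-Q\|$ and $\|N(Q)\|\le(1-\tau)C_\Phi(\delta_\tau)+\tfrac{\tau}{n}$. (c) Denominator perturbation: $|t(P)-t(Q)|=(1-\tau)\,|\operatorname{Tr}(\Phi(P)-\Phi(Q))|\le(1-\tau)\,n\,\|\Phi(P)-\Phi(Q)\|\le(1-\tau)\,n\,L_\Phi(\delta_\tau)\|P-Q\|$, using $|\operatorname{Tr}M|\le n\|M\|$ for symmetric $M$. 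Substituting (a)--(c) into the quotient identity and collecting terms produces exactly $\|\widehat\Phi_\tau(P)-\widehat\Phi_\tau(Q)\|\le\Lambda(\tau)\|P-Q\|$.

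For the equivalence, assume $\Lambda(\tau)<1$. The Lipschitz bound then says directly that $\widehat\Phi_\tau$ contracts on $\mathcal S_{\delta_\tau}$, i.e. (i) holds. The set $\mathcal S_{\delta_\tau}$ is closed and bounded in the finite-dimensional space $\mathbb S^n$, hence compact and in particular a complete metric space, and $\widehat\Phi_\tau$ is a self-map of it (the $\lambda_{\min}(\widehat\Phi_\tau(P))\ge\delta_\tau$ computation in the proof of Theorem~\ref{thm-regularized}); so the Banach fixed-point theorem yields a unique fixed point $P^{(\tau)}\in\mathcal S_{\delta_\tau}$, which is (ii). Iterating $\|P_{k+1}-P^{(\tau)}\|=\|\widehat\Phi_\tau(P_k)-\widehat\Phi_\tau(P^{(\tau)})\|\le\Lambda(\tau)\|P_k-P^{(\tau)}\|$ along the RNVI sequence (observing $P_k\in\mathcal S_{\delta_\tau}$ for $k\ge1$, and taking $P_0\in\mathcal S_{\delta_\tau}$ without loss of generality, or absorbing the first step) gives $\|P_k-P^{(\tau)}\|\le\Lambda(\tau)^k\|P_0-P^{(\tau)}\|\to0$, which is (iii). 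Since $\Lambda(\tau)<1$ has already been shown to imply (i) unconditionally, the reverse implications needed for equivalence are immediate, and the three statements are logically equivalent under this hypothesis.

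The main obstacle is item (c) together with its reliance on (a): the perturbation of the normalizing denominator must be controlled \emph{uniformly} over the slice, which is exactly why the a priori lower bound $t(\cdot)\ge\tau$ is needed and why the Schur-complement positivity $\Phi(P)\succeq0$ on $\mathcal S_{\delta_\tau}$ cannot be omitted. A secondary point requiring care is that Lemma~\ref{lem:Lip-Phi} supplies $L_\Phi(\delta_\tau)$ and $C_\Phi(\delta_\tau)$ only on the interior slice $\mathcal S_{\delta_\tau}$, so one must confirm $\widehat\Phi_\tau(\mathcal S_{\delta_\tau})\subseteq\mathcal S_{\delta_\tau}$ before applying those bounds iteratively --- but this self-map property has already been established in Theorem~\ref{thm-regularized}, so no additional estimate is needed there.
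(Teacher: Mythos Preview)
Your proposal is correct and follows essentially the same approach as the paper: decompose the normalized quotient via the identity $\widehat\Phi_\tau(P)-\widehat\Phi_\tau(Q)=\frac{N(P)-N(Q)}{t(P)}+N(Q)\,\frac{t(Q)-t(P)}{t(P)\,t(Q)}$, bound each piece using $t(\cdot)\ge\tau$, Lemma~\ref{lem:Lip-Phi}, and $|\operatorname{Tr}M|\le n\|M\|$, then invoke Banach's fixed-point theorem on the complete self-mapped slice $\mathcal S_{\delta_\tau}$ once $\Lambda(\tau)<1$. Your discussion of why (i)--(iii) are ``equivalent'' (namely, all three hold simultaneously under the hypothesis) is slightly more explicit than the paper's one-line appeal to Banach, but the substance is identical.
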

\begin{proof}
	Recall that
	\[
	\widehat\Phi_\tau(P)
	=\frac{Y_\tau(P)}{\operatorname{Tr}Y_\tau(P)},
	\qquad
	Y_\tau(P):=(1-\tau)\Phi(P)+\tfrac{\tau}{n}I .
	\]
	For any $P,Q\in\mathcal S_{\delta_\tau}$, a standard normalization estimate yields
	\begin{align*}
		&\|\widehat\Phi_\tau(P)-\widehat\Phi_\tau(Q)\|
		\le
		\frac{\|Y_\tau(P)-Y_\tau(Q)\|}{\nu}+\frac{\max\{\|Y_\tau(P)\|,\|Y_\tau(Q)\|\}}{\nu^2}
		\,|\operatorname{Tr}(Y_\tau(P)-Y_\tau(Q))|,
	\end{align*}
	where $\nu:=\inf_{X\in\mathcal S_{\delta_\tau}}\operatorname{Tr}Y_\tau(X)\ge\tau$.
	By Lemma~\ref{lem:Lip-Phi}, we know
	\begin{align*}
		\|Y_\tau(P)-Y_\tau(Q)\|
		&=(1-\tau)\|\Phi(P)-\Phi(Q)\|\le (1-\tau)L_\Phi(\delta_\tau)\|P-Q\|.
	\end{align*}
	Moreover, it holds
	\begin{align*}
		&|\operatorname{Tr}(Y_\tau(P)-Y_\tau(Q))|
		\le n\|Y_\tau(P)-Y_\tau(Q)\|,\quad\|Y_\tau(\cdot)\|
		\le (1-\tau)C_\Phi(\delta_\tau)+\tfrac{\tau}{n}.
	\end{align*}
	Combining these estimates gives
	\[
	\|\widehat\Phi_\tau(P)-\widehat\Phi_\tau(Q)\|
	\le \Lambda(\tau)\,\|P-Q\|
	\]
	with $\Lambda(\tau)$ as stated.
	If $\Lambda(\tau)<1$, Banach’s fixed-point theorem yields
	(i)--(iii). 
\end{proof}

\begin{lemma}\label{Lip-con}
	Fix a trace-normalized interior slice 
	\[
	S_a := \{\,X\in\mathbb S_{++}^n : X\succeq aI,\ \operatorname{Tr}X=1\,\},\quad a>0.
	\]
	For $\tau\in(0,1)$, let $\widehat\Phi_\tau$ be defined by~\eqref{Phi-tau}.
	Then, the mapping
	\[
	(P,\tau,H)\ \longmapsto\ D\widehat\Phi_\tau(P)[H]
	\]
	is jointly continuous on $S_a\times(0,1)\times\{H:\|H\|=1\}$.
	Consequently, the local Lipschitz modulus
	\[
	{\rm Lip}(P,\tau)
	:=\|D\widehat\Phi_\tau(P)\|
	=\sup_{\|H\|=1}\,\|D\widehat\Phi_\tau(P)[H]\|
	\]
	depends continuously on $(P,\tau)\in S_a\times(0,1)$.
\end{lemma}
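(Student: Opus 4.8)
\textit{Proof plan.}
The plan is to exhibit $\widehat\Phi_\tau$ as a jointly smooth function of $(P,\tau)$ on an open set containing $S_a\times(0,1)$, to compute the derivative $D\widehat\Phi_\tau(P)[H]$ by the chain and quotient rules, and then to read off joint continuity of $(P,\tau,H)\mapsto D\widehat\Phi_\tau(P)[H]$ from the fact that every term of the resulting expression is an algebraic combination of jointly continuous functions whose denominators never vanish. The claim about ${\rm Lip}$ will then follow from the continuity of a supremum of a jointly continuous function over the fixed compact index set $\{H:\|H\|=1\}$.

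The first step is to record the two structural positivity facts that make the construction regular. For $P\in S_a$ we have $P\succeq aI$, hence $R(P)\succeq aR_0\succ0$ by Assumption~\ref{Nondegenerate}; therefore $R(\cdot)^{-1}$ is defined on a neighbourhood of $S_a$ and, being a restriction of matrix inversion, is $C^\infty$ there, so that $\Phi(\cdot)$ — built from the fixed linear maps $P\mapsto A^\top PA+\sigma^2\bar A^\top P\bar A$, $P\mapsto S(P)$, $P\mapsto R(P)$ and from $R(\cdot)^{-1}$ by matrix multiplication — is $C^\infty$ on a neighbourhood of $S_a$ as well. Next, $\Phi(P)\succeq0$ for every $P\succeq0$: indeed $\Phi(P)$ is the Schur complement, relative to the positive-definite block $R(P)$, of the positive semidefinite matrix $\mathbb E\big[G_\omega^\top P\,G_\omega\big]$ with $G_\omega:=[\,A+\bar A\omega\ \ B+\bar B\omega\,]$ (equivalently, $x^\top\Phi(P)x=\inf_{\mu}\mathbb E[(G_\omega[x;\mu(x)])^\top P(G_\omega[x;\mu(x)])\mid x]\ge0$). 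Writing $Y_\tau(P):=(1-\tau)\Phi(P)+\tfrac{\tau}{n}I$, it follows that $\operatorname{Tr}Y_\tau(P)=(1-\tau)\operatorname{Tr}\Phi(P)+\tau\ge\tau>0$ on $S_a\times(0,1)$, so $\widehat\Phi_\tau(P)=Y_\tau(P)/\operatorname{Tr}Y_\tau(P)$ — affine in $\tau$, $C^\infty$ in $P$, divided by a strictly positive denominator — is jointly $C^\infty$ in $(P,\tau)$ on a neighbourhood of $S_a\times(0,1)$.

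Differentiating in $P$, with $DS(P)[H]=A^\top HB+\sigma^2\bar A^\top H\bar B$, $DR(P)[H]=B^\top HB+\sigma^2\bar B^\top H\bar B$ and $D(R^{-1})[\,\cdot\,]=-R^{-1}(DR)[\,\cdot\,]R^{-1}$, the chain rule gives
\[
D\Phi(P)[H]=A^\top HA+\sigma^2\bar A^\top H\bar A-\big(DS(P)[H]\big)R(P)^{-1}S(P)^\top+S(P)R(P)^{-1}\big(DR(P)[H]\big)R(P)^{-1}S(P)^\top-S(P)R(P)^{-1}\big(DS(P)[H]\big)^\top,
\]
and the quotient rule then yields
\[
D\widehat\Phi_\tau(P)[H]=\frac{1-\tau}{\operatorname{Tr}Y_\tau(P)}\Big(D\Phi(P)[H]-\frac{\operatorname{Tr}\big(D\Phi(P)[H]\big)}{\operatorname{Tr}Y_\tau(P)}\,Y_\tau(P)\Big).
\]
Every ingredient here — polynomials in the entries of $P$, $H$ and in $\tau$, the continuous matrix-valued map $R(\cdot)^{-1}$ on $S_a$, and division by $\operatorname{Tr}Y_\tau(P)\ge\tau>0$ — is jointly continuous on $S_a\times(0,1)\times\{\|H\|=1\}$, and joint continuity is preserved under sums, products, and quotients with nonvanishing denominator; hence $(P,\tau,H)\mapsto D\widehat\Phi_\tau(P)[H]$ is jointly continuous.

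For the final assertion, ${\rm Lip}(P,\tau)=\sup_{\|H\|=1}\|D\widehat\Phi_\tau(P)[H]\|$ is the maximum of the jointly continuous function $(P,\tau,H)\mapsto\|D\widehat\Phi_\tau(P)[H]\|$ over the fixed compact set $\{H:\|H\|=1\}$; by a standard parametric-maximum argument (uniform continuity on a compact neighbourhood of any $(P_0,\tau_0)$, or Berge's maximum theorem) this maximum is continuous in $(P,\tau)$ on $S_a\times(0,1)$. Equivalently, $(P,\tau)\mapsto D\widehat\Phi_\tau(P)$ is continuous into the finite-dimensional space of linear operators on symmetric matrices and the operator norm is continuous there. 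The only place where the problem structure genuinely enters is the pair of positivity facts $R(P)\succ0$ and $\Phi(P)\succeq0$ on $S_a$, which guarantee that the matrix inversion and the trace-normalization are well-behaved; beyond this, the argument is chain-rule bookkeeping together with the stability of joint continuity under algebraic operations, so I do not anticipate a substantive obstacle.
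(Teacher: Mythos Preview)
Your proof is correct and follows essentially the same route as the paper: both arguments verify $R(P)\succ0$ on $S_a$, compute $D\Phi(P)[H]$ and then $D\widehat\Phi_\tau(P)[H]$ via the quotient rule (your formula and the paper's coincide once one substitutes $\widehat\Phi_\tau(P)=Y_\tau(P)/\operatorname{Tr}Y_\tau(P)$), and conclude continuity of ${\rm Lip}(P,\tau)$ via Berge's maximum theorem over the compact unit sphere. Your explicit justification that $\Phi(P)\succeq0$ (hence $\operatorname{Tr}Y_\tau(P)\ge\tau>0$) is a small elaboration the paper leaves implicit, but otherwise the arguments match.
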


\begin{proof}
	On $S_a$, $R(P)=B^\top P B+\sigma^2\bar B^\top P\bar B$ satisfies 
	$R(P)\succeq aR_0\succ0$; hence, $R(P)^{-1}$ depends continuously on $P$.
	Direct differentiation of~\eqref{eq-Phi} yields
	\begin{align*}
		\quad D\Phi(P)[H]
		&= A^\top H A+\sigma^2\bar A^\top H\bar A
		- (A^\top HB+\sigma^2\bar A^\top H\bar B)R(P)^{-1}S(P)^\top\\
		&- S(P) R(P)^{-1}(B^\top H A+\sigma^2\bar B^\top H\bar A) + S(P)R(P)^{-1}(B^\top H B+\sigma^2\bar B^\top H\bar B)R(P)^{-1}S(P)^\top,
	\end{align*}
	an affine function of $H$ whose coefficients depend continuously on $P$.
	Thus, $(P,H)\mapsto D\Phi(P)[H]$ is continuous.
	
	For $Y_\tau(P)=(1-\tau)\Phi(P)+\tfrac{\tau}{n}I$ and 
	$s_\tau(P)=\operatorname{Tr} Y_\tau(P)$, we have
	\begin{align*}
		D\widehat\Phi_\tau(P)[H]
		=& \frac{1-\tau}{s_\tau(P)}\Big(
		D\Phi(P)[H]- \widehat\Phi_\tau(P)\,\operatorname{Tr}(D\Phi(P)[H])
		\Big),
	\end{align*}
	where $(P,\tau)\mapsto s_\tau(P)$ and $(P,\tau)\mapsto \widehat\Phi_\tau(P)$ are continuous.
	Hence, the trivariate map 
	\[
	(P,\tau,H)\ \longmapsto\ D\widehat\Phi_\tau(P)[H]
	\]
	is continuous on $S_a\times(0,1)\times\{H:\|H\|=1\}$.
	
	For fixed $(P,\tau)$, $D\widehat\Phi_\tau(P)$ is linear in $H$.  
	Since the unit sphere $\{H:\|H\|=1\}$ is compact and the above trivariate map is continuous, 
	Berge's maximum theorem (see \cite{Berge63}, p.115, Theorem 1) implies that
	\[
	(P,\tau)\ \longmapsto\ 
	{\rm Lip}(P,\tau)
	= \sup_{\|H\|=1}\,\|D\widehat\Phi_\tau(P)[H]\|
	\]
	is continuous on $S_a\times(0,1)$. 
\end{proof}

The continuity of the local Lipschitz modulus established in Lemma~\ref{Lip-con} 
allows us to track the fixed points along a decreasing sequence of~$\tau$, 
which is the essence of the continuation method formalized below.
\begin{algorithm}[t]\label{alg:RNVI}
	\caption{Regularized Normalized Value Iteration}
	\begin{algorithmic}[1]
		\Statex \hspace{-\algorithmicindent} \textbf{Input:} System matrices $(A,\bar A,B,\bar B)$; noise $\omega_k\sim\mathcal{N}(0,\sigma^2)$; 
		initial $P_0\succ0$ with $\operatorname{Tr}(P_0)=1$;
		decreasing schedule $\tau_0>\tau_1>\cdots>\tau_M$;
		tolerance $\varepsilon$.
		\Statex \hspace{-\algorithmicindent}\textbf{Output:} The tightest certified bounds for the optimal stabilizing rate, together with 
		the parameters at which the best lower and upper bounds are attained and the gain associated with the best upper bound.
		
		\State $P \gets P_0$
		\State $\rho_{\mathrm{low}}^{\mathrm{best}}\gets 0$, 
		$\rho_{\mathrm{up}}^{\mathrm{best}}\gets +\infty$
		\State $\tau_{\mathrm{low}}\gets\texttt{None}$, 
		$\tau_{\mathrm{up}}\gets\texttt{None}$,
		$K_{\mathrm{up}}\gets\texttt{None}$
		
		\For{$j=0,1,\dots,M$}
		\State $\tau\gets\tau_j$, \quad $P^{(0)}\gets P$
		\For{$k=0,1,2,\dots$}
		\State $P^{(k+1)}\gets
		\dfrac{(1-\tau)\Phi(P^{(k)})+\tfrac{\tau}{n}I}
		{\operatorname{Tr}\!\big((1-\tau)\Phi(P^{(k)})+\tfrac{\tau}{n}I\big)}$
		\If{$\|P^{(k+1)}-P^{(k)}\|_F\le\varepsilon$}
		\State\textbf{break}
		\EndIf
		\EndFor
		\State $P^{(\tau)}\gets P^{(k+1)}$
		\State $K^{(\tau)}\gets R(P^{(\tau)})^{-1}S(P^{(\tau)})^\top$
		\State $\gamma^{(\tau)}\gets
		\operatorname{Tr}\!\big((1-\tau)\Phi(P^{(\tau)})+\tfrac{\tau}{n}I\big)$
		\State $L(P^{(\tau)})\gets
		\lambda_{\min}\big((P^{(\tau)})^{-1/2}
		\Phi(P^{(\tau)})(P^{(\tau)})^{-1/2}\big)$
		\State $\rho_{\mathrm{low}}\gets\sqrt{L(P^{(\tau)})}$, \quad 
		$\rho_{\mathrm{up}}\gets
		\sqrt{\tfrac{\gamma^{(\tau)}}{1-\tau}}$
		
		\If{$\rho_{\mathrm{low}}>\rho_{\mathrm{low}}^{\mathrm{best}}$}
		\State $\rho_{\mathrm{low}}^{\mathrm{best}}\gets\rho_{\mathrm{low}}$, \quad
		$\tau_{\mathrm{low}}\gets\tau$
		\EndIf
		\If{$\rho_{\mathrm{up}}<\rho_{\mathrm{up}}^{\mathrm{best}}$}
		\State $\rho_{\mathrm{up}}^{\mathrm{best}}\gets\rho_{\mathrm{up}}$, \quad
		$\tau_{\mathrm{up}}\gets\tau$, \quad
		$K_{\mathrm{up}}\gets K^{(\tau)}$
		\EndIf
		
		\State $P\gets P^{(\tau)}$ 
		\EndFor
		
		\State \textbf{Return:}
		$\rho^\ast\in
		[\rho_{\mathrm{low}}^{\mathrm{best}},
		\rho_{\mathrm{up}}^{\mathrm{best}}]$, \quad
		corresponding parameters $(\tau_{\mathrm{low}},\tau_{\mathrm{up}})$
		and gain $K_{\mathrm{up}}$.
	\end{algorithmic}
\end{algorithm}

	\begin{theorem}\label{thm:continuation}
		Suppose $\tau_{0}\in(0,1)$ satisfies $\Lambda(\tau_{0})<1$.  
		Then, there exists a strictly decreasing sequence
		\[
		\tau_{0}>\tau_{1}>\tau_{2}>\cdots
		\]
		with associated locally selected fixed points $\{P^{(\tau_j)}\}_{j\ge0}$
		of~\eqref{tau-fix}, such that RNVI with parameter $\tau_{j+1}$,
		initialized at $P^{(\tau_j)}$, converges locally to
		$P^{(\tau_{j+1})}$ for each $j$.
	\end{theorem}
	
	\begin{proof}
		Let $P_j:=P^{(\tau_j)}$. We argue inductively. For $j=0$, the assumption
		$\Lambda(\tau_0)<1$ gives ${\rm Lip}(P_0,\tau_0)<1$. Suppose that
		\[
		{\rm Lip}(P_j,\tau_j)
		=
		\|D\widehat\Phi_{\tau_j}(P_j)\|<1.
		\]
		Choose $q_j$ such that
		\[
		{\rm Lip}(P_j,\tau_j)<q_j<1.
		\]
		Since $P_j\succ0$, choose $a_j>0$ with
		$a_j<\lambda_{\min}(P_j)$, and then choose $r_j>0$ so small that
		\[
		r_j<\lambda_{\min}(P_j)-a_j.
		\]
		Define the relative closed ball
		\[
		\overline B(P_j,r_j)
		:=
		\{P\in\mathbb S^n:\operatorname{Tr}P=1,\ \|P-P_j\|\le r_j\}.
		\]
		Then, it holds
		\[
		\overline B(P_j,r_j)
		\subset
		S_{a_j}
		:=
		\{P\in\mathbb S^n_{++}:P\succeq a_jI,\ \operatorname{Tr}P=1\},
		\]
		because, for each $P\in\overline B(P_j,r_j)$,
		\[
		\lambda_{\min}(P)
		\ge
		\lambda_{\min}(P_j)-\|P-P_j\|
		>
		a_j.
		\]
		
		By Lemma~\ref{Lip-con}, after reducing $r_j$ if necessary, there exists
		$\varepsilon_{j,1}>0$ such that
		\[
		\|D\widehat\Phi_\tau(P)\|\le q_j,
		\qquad
		P\in\overline B(P_j,r_j),\quad
		\tau\in(\tau_j-\varepsilon_{j,1},\tau_j].
		\]
		Hence, for all such $\tau$ and all
		$P,Q\in\overline B(P_j,r_j)$,
		\[
		\|\widehat\Phi_\tau(P)-\widehat\Phi_\tau(Q)\|
		\le
		q_j\|P-Q\|.
		\]
		Thus $\widehat\Phi_\tau$ is a contraction on $\overline B(P_j,r_j)$.
		
		It remains to make this ball invariant. Since
		$\widehat\Phi_{\tau_j}(P_j)=P_j$ and $\widehat\Phi_\tau(P_j)$ is continuous
		in $\tau$, there exists $\varepsilon_{j,2}>0$ such that
		\[
		\|\widehat\Phi_\tau(P_j)-P_j\|
		\le
		(1-q_j)r_j,
		\qquad
		\tau\in(\tau_j-\varepsilon_{j,2},\tau_j].
		\]
		Set
		\[
		\varepsilon_j
		:=
		\min\{\varepsilon_{j,1},\varepsilon_{j,2},\tau_j\},
		\]
		and choose
		\[
		\tau_{j+1}\in(\tau_j-\varepsilon_j,\tau_j).
		\]
		Then, for each $P\in\overline B(P_j,r_j)$, it holds
		\[
		\begin{aligned}
			\|\widehat\Phi_{\tau_{j+1}}(P)-P_j\|
			&\le
			\|\widehat\Phi_{\tau_{j+1}}(P)
			-\widehat\Phi_{\tau_{j+1}}(P_j)\|+
			\|\widehat\Phi_{\tau_{j+1}}(P_j)-P_j\|\le
			q_j\|P-P_j\|+(1-q_j)r_j
			\le r_j.
		\end{aligned}
		\]
		Since $\widehat\Phi_{\tau_{j+1}}$ is trace normalized, it follows that
		\[
		\widehat\Phi_{\tau_{j+1}}\big(\overline B(P_j,r_j)\big)
		\subseteq
		\overline B(P_j,r_j).
		\]
		
		The ball \(\overline B(P_j,r_j)\) is complete under the spectral norm.
		Therefore, by the Banach fixed point theorem, the contraction self map
		\(\widehat\Phi_{\tau_{j+1}}\) has a unique fixed point
		\(P^{(\tau_{j+1})}\) in this ball, and RNVI initialized at \(P_j\) remains
		in the ball and converges to \(P^{(\tau_{j+1})}\). Since
		\(P^{(\tau_{j+1})}\in\overline B(P_j,r_j)\), the derivative bound above
		implies
		\[
		{\rm Lip}(P^{(\tau_{j+1})},\tau_{j+1})\le q_j<1.
		\]
		Thus the induction can be continued, and the theorem follows.	
	\end{proof}

Having established global convergence under a contraction condition and local convergence via continuation, we summarize the complete scheme in Algorithm~\textcolor{blue}{1}. 
The algorithm couples RNVI with the certified bounds for $\rho^*$ from Theorem~\ref{cor:certified}. 

	\begin{remark}
		For the bounds
		\(\underline{\rho}:=\rho_{\rm low}^{\rm best}\) and
		\(\overline{\rho}:=\rho_{\rm up}^{\rm best}\) returned by
		Algorithm~1 upon convergence of the fixed-point iterations, the corresponding gain \(K_{\rm up}\) satisfies
		\[
		\underline{\rho}
		\le \rho^*
		\le \rho(K_{\rm up})
		\le \overline{\rho},
		\]
		where \(\rho(K_{\rm up})\) denotes the stabilizing rate of the feedback
		control \(u_k=-K_{\rm up}x_k\). The inequalities follow, respectively,
		from the lower-bound certificate, the definition of \(\rho^*\), and
		Proposition~\ref{prop:J-upper}; hence $K_{\rm up}$  can  be regarded as a near-optimal gain.
	\end{remark}

\begin{remark}
	The fixed point of~\eqref{tau-fix} is known to exist by Brouwer’s
	fixed-point theorem, but that theorem is nonconstructive and does not
	provide a mechanism for obtaining the fixed point. The continuation result
	in Theorem~\ref{thm:continuation} addresses this issue by providing a
	problem-specific procedure that generates a decreasing sequence
	\(\{\tau_j\}_{j\geq 1}\) and associated fixed points
	\(\{P^{(\tau_j)}\}_{j\geq 1}\) through locally convergent RNVI iterations initialized
	from the preceding stage.
	The condition \(\Lambda(\tau)<1\) serves only as a sufficient global
		contraction certificate for obtaining a certified starting point of the
		continuation procedure, rather than a necessary condition for RNVI
		convergence. Since \(\Lambda(\tau)\to0\) as \(\tau\uparrow1\), there always
		exists \(\tau_0\) close to \(1\) such that
		\(\Lambda(\tau_0)<1\). The continuation steps do not require
		\(\Lambda(\tau_j)<1\) at every stage. Its limitation is that it does not
		guarantee that an arbitrarily prescribed target value of \(\tau\) can be
		reached along a continuation path. In numerical implementation, a
		prescribed decreasing grid is used as a practical continuation schedule.
		If RNVI fails to satisfy the stopping criterion at a trial value of
		\(\tau\), additional intermediate values can be inserted between this value
		and the last successfully computed one, and the continuation procedure is
		repeated. Only iterates satisfying the stopping criterion are used to
		evaluate the reported bounds.
\end{remark}

	\smallskip\noindent\textbf{Relation to Riccati theory and numerical methods.}
	Equation~\eqref{tau-fix} has a Riccati-type algebraic form, since the operator
	$\Phi$ resembles generalized Riccati operators arising in discrete-time
	stochastic LQ control with multiplicative noise.  However, its role here is
	fundamentally different from that of classical LQ Riccati equations.
	
	\textit{Classical Riccati methods.}
	In continuous- and discrete-time LQ control, algebraic Riccati equations
	characterize optimal regulators and stabilizing feedback gains under standard
	assumptions~(\cite{Anderson07}).  This structure enables Hamiltonian or
	symplectic methods~(\cite{Laub79}), Newton--Kleinman iterations~(\cite{feitzinger2009}),
	and projection/Krylov solvers~(\cite{bunse96}).
	
	\textit{Why do these methods not apply directly in our setting?}
	Despite its Riccati-type form, our problem is a Bellman-type \emph{nonlinear
		eigenvalue problem} $\Phi(P)=\gamma P$, and the regularization in~\eqref{tau-fix}
	is essential.  As a result, $\widehat\Phi_\tau(P)=P$ is not a standard ARE: there
	is no associated Hamiltonian matrix, and trace normalization destroys the
	Loewner-order monotonicity exploited by classical Riccati iterations.
	We therefore rely on the contraction framework in
	Theorem~\ref{thm:RNVI-global}, which provides a direct analytical foundation for
	RNVI.
	
	\textit{Future directions.}
	The order structure of $P\mapsto\Phi(P)$ suggests possible connections to
	nonlinear Perron--Frobenius theory and Hilbert’s projective metric,
	which may lead to stronger analytical guarantees beyond contractivity.

\section{Numerical Experiments}\label{sec:numerics}
We now demonstrate the proposed framework through two representative numerical examples. 
The first example reports a randomized evaluation over \(100\) generated
	systems and examines the tightness of the certified lower and upper bounds.
The second example considers a challenging system near the mean-square
stability boundary, for which it is not clear a priori whether the system
can be stabilized in the mean-square sense.

\subsection{Randomized evaluation of certified gaps}
\label{subsec:random-4x4}

\textbf{Setup.}
We evaluate the proposed certificates on \(100\) independently generated
four-dimensional stochastic systems
\[
x_{k+1}=(A+\omega_k\bar A)x_k+(B+\omega_k\bar B)u_k.
\]
In this experiment, we take
\[
n=4,\qquad m=2,\qquad \sigma=1 .
\]
The coefficient matrices \(A,\bar A,B,\bar B\) are randomly generated with
independent entries drawn from uniform distributions:
\[
A_{ij}\sim {\rm Unif}[-0.8,0.8],
\qquad
\bar A_{ij}\sim {\rm Unif}[-0.3,0.3],
\]
and
\[
B_{ij}\sim {\rm Unif}[-0.8,0.8],
\qquad
\bar B_{ij}\sim {\rm Unif}[-0.3,0.3].
\]
The random seed is fixed by \texttt{rng(0)}. We accept only samples for which
\(\operatorname{rank}[B^\top,\,\sigma\bar B^\top]^\top=m\) and
\(\sigma_{\min}([B^\top,\,\sigma\bar B^\top]^\top)\ge0.03\).

\textbf{Methodology.}
For each accepted instance, RNVI is run over a logarithmic grid
\[
\tau\in[5\times10^{-1},10^{-8}]
\]
with \(30\) grid points, using continuation in \(\tau\). We record the best
certified lower and upper bounds produced along this grid, and measure the
certified gap on the \(\rho\)-scale by
\[
{\rm gap}
=
\rho_{\rm up}^{\rm best}
-
\rho_{\rm low}^{\rm best}.
\]

\textbf{Overall results.}
All \(100\) instances successfully converge on the tested regularization
grid. Table~\ref{tab:all-100-gaps} reports the certified gaps of all
instances, with boxed entries marking gaps exceeding
\(10^{-2}\). The summary statistics are reported in
Table~\ref{tab:random-gap-statistics}.
\begin{table}[htbp]
	\centering
	\caption{Certified gaps of all \(100\) random instances}
	\label{tab:all-100-gaps}
	\footnotesize
	\setlength{\tabcolsep}{3pt}
	\resizebox{0.75\linewidth}{!}{
		\begin{tabular}{c c|c c|c c|c c}
			\toprule
			Inst. & Gap & Inst. & Gap & Inst. & Gap & Inst. & Gap \\
			\midrule
			\(1\) & \(\boxed{0.2018}\) & \(26\) & \(3.990{\times}10^{-7}\) & \(51\) & \(1.675{\times}10^{-7}\) & \(76\) & \(5.866{\times}10^{-7}\) \\
			\(2\) & \(3.027{\times}10^{-7}\) & \(27\) & \(7.039{\times}10^{-6}\) & \(52\) & \(1.164{\times}10^{-7}\) & \(77\) & \(4.742{\times}10^{-5}\) \\
			\(3\) & \(7.030{\times}10^{-6}\) & \(28\) & \(3.166{\times}10^{-7}\) & \(53\) & \(\boxed{0.5018}\) & \(78\) & \(2.581{\times}10^{-7}\) \\
			\(4\) & \(1.031{\times}10^{-7}\) & \(29\) & \(4.273{\times}10^{-5}\) & \(54\) & \(2.519{\times}10^{-7}\) & \(79\) & \(1.252{\times}10^{-7}\) \\
			\(5\) & \(4.012{\times}10^{-7}\) & \(30\) & \(1.651{\times}10^{-7}\) & \(55\) & \(4.233{\times}10^{-7}\) & \(80\) & \(2.600{\times}10^{-3}\) \\
			\(6\) & \(9.584{\times}10^{-8}\) & \(31\) & \(1.008{\times}10^{-6}\) & \(56\) & \(3.832{\times}10^{-6}\) & \(81\) & \(8.989{\times}10^{-6}\) \\
			\(7\) & \(6.756{\times}10^{-6}\) & \(32\) & \(1.567{\times}10^{-7}\) & \(57\) & \(1.356{\times}10^{-7}\) & \(82\) & \(1.305{\times}10^{-4}\) \\
			\(8\) & \(8.782{\times}10^{-8}\) & \(33\) & \(8.326{\times}10^{-7}\) & \(58\) & \(7.905{\times}10^{-8}\) & \(83\) & \(7.441{\times}10^{-8}\) \\
			\(9\) & \(2.763{\times}10^{-7}\) & \(34\) & \(3.687{\times}10^{-8}\) & \(59\) & \(7.432{\times}10^{-7}\) & \(84\) & \(\boxed{0.2884}\) \\
			\(10\) & \(8.275{\times}10^{-7}\) & \(35\) & \(2.309{\times}10^{-8}\) & \(60\) & \(4.793{\times}10^{-8}\) & \(85\) & \(1.155{\times}10^{-6}\) \\
			\(11\) & \(6.958{\times}10^{-6}\) & \(36\) & \(1.218{\times}10^{-7}\) & \(61\) & \(2.398{\times}10^{-7}\) & \(86\) & \(8.100{\times}10^{-8}\) \\
			\(12\) & \(2.161{\times}10^{-6}\) & \(37\) & \(6.019{\times}10^{-8}\) & \(62\) & \(3.513{\times}10^{-8}\) & \(87\) & \(3.349{\times}10^{-7}\) \\
			\(13\) & \(4.734{\times}10^{-7}\) & \(38\) & \(3.773{\times}10^{-7}\) & \(63\) & \(8.692{\times}10^{-8}\) & \(88\) & \(3.747{\times}10^{-8}\) \\
			\(14\) & \(1.255{\times}10^{-6}\) & \(39\) & \(2.015{\times}10^{-7}\) & \(64\) & \(2.932{\times}10^{-8}\) & \(89\) & \(2.736{\times}10^{-7}\) \\
			\(15\) & \(9.383{\times}10^{-8}\) & \(40\) & \(1.057{\times}10^{-7}\) & \(65\) & \(1.275{\times}10^{-5}\) & \(90\) & \(5.776{\times}10^{-7}\) \\
			\(16\) & \(8.260{\times}10^{-8}\) & \(41\) & \(4.765{\times}10^{-5}\) & \(66\) & \(3.619{\times}10^{-8}\) & \(91\) & \(3.700{\times}10^{-5}\) \\
			\(17\) & \(2.169{\times}10^{-8}\) & \(42\) & \(7.379{\times}10^{-7}\) & \(67\) & \(2.649{\times}10^{-8}\) & \(92\) & \(1.162{\times}10^{-6}\) \\
			\(18\) & \(5.669{\times}10^{-7}\) & \(43\) & \(1.924{\times}10^{-7}\) & \(68\) & \(1.908{\times}10^{-7}\) & \(93\) & \(2.041{\times}10^{-7}\) \\
			\(19\) & \(1.086{\times}10^{-5}\) & \(44\) & \(2.053{\times}10^{-6}\) & \(69\) & \(4.781{\times}10^{-8}\) & \(94\) & \(1.250{\times}10^{-7}\) \\
			\(20\) & \(1.929{\times}10^{-7}\) & \(45\) & \(6.385{\times}10^{-4}\) & \(70\) & \(3.317{\times}10^{-7}\) & \(95\) & \(1.850{\times}10^{-7}\) \\
			\(21\) & \(1.792{\times}10^{-7}\) & \(46\) & \(7.379{\times}10^{-8}\) & \(71\) & \(7.108{\times}10^{-8}\) & \(96\) & \(1.741{\times}10^{-7}\) \\
			\(22\) & \(9.576{\times}10^{-7}\) & \(47\) & \(1.048{\times}10^{-5}\) & \(72\) & \(2.479{\times}10^{-6}\) & \(97\) & \(2.783{\times}10^{-7}\) \\
			\(23\) & \(3.462{\times}10^{-7}\) & \(48\) & \(9.908{\times}10^{-7}\) & \(73\) & \(9.887{\times}10^{-8}\) & \(98\) & \(1.311{\times}10^{-5}\) \\
			\(24\) & \(4.282{\times}10^{-6}\) & \(49\) & \(3.280{\times}10^{-5}\) & \(74\) & \(1.450{\times}10^{-6}\) & \(99\) & \(3.953{\times}10^{-7}\) \\
			\(25\) & \(1.102{\times}10^{-7}\) & \(50\) & \(4.721{\times}10^{-8}\) & \(75\) & \(9.465{\times}10^{-8}\) & \(100\) & \(6.709{\times}10^{-5}\) \\
			\bottomrule
	\end{tabular}}
\end{table}
\begin{table}[htbp]
	\centering
	\caption{Certified gap statistics over \(100\) random instances}
	\label{tab:random-gap-statistics}
	\begin{tabular}{l c}
		\toprule
		Statistic & Value\\
		\midrule
		Successful instances & \(100/100\)\\
		Median gap & \(3.0968\times10^{-7}\)\\
		Mean gap & \(9.9576\times10^{-3}\)\\
		\(90\%\) quantile & \(3.7573\times10^{-5}\)\\
		\(95\%\) quantile & \(1.5591\times10^{-4}\)\\
		Maximum gap & \(5.0181\times10^{-1}\)\\
		\bottomrule
	\end{tabular}
\end{table}
The results show that the two certificates are tight for most of the
generated systems: \(97\) out of \(100\) instances have certified gaps below
\(10^{-2}\). The median gap is \(3.0968\times10^{-7}\), and the \(95\%\)
quantile is \(1.5591\times10^{-4}\), indicating that the certified intervals
are nearly closed in most cases. The larger mean reflects the influence of
the three outliers, namely instances \(1\), \(53\), and \(84\), whose gaps
exceed \(10^{-2}\). Their coefficient matrices and corresponding certified
bounds are reported together in
Table~\ref{tab:large-gap-instances}.
\begin{table*}[t]
	\centering
	\caption{Coefficient matrices and certified bounds for the three
		instances with large gaps}
	\label{tab:large-gap-instances}
	\scriptsize
	\setlength{\tabcolsep}{3pt}
	\renewcommand{\arraystretch}{1.05}
	\resizebox{\textwidth}{!}{
		\begin{tabular}{c|cccc|cccc|cc|cc|c|c|c}
			\toprule
			Inst.
			& \multicolumn{4}{c}{\(A\)}
			& \multicolumn{4}{c}{\(\bar A\)}
			& \multicolumn{2}{c}{\(B\)}
			& \multicolumn{2}{c}{\(\bar B\)}
			& \(\rho_{\rm low}^{\rm best}\)
			& \(\rho_{\rm up}^{\rm best}\)
			& Gap\\
			\midrule
			
			\multirow{4}{*}{\(1\)}
			& 0.5036 & 0.2118 & 0.7320 & 0.7315
			& -0.0469 & 0.0934 & 0.1072 & 0.0933
			& -0.3569 & 0.3117
			& -0.0368 & -0.1879
			& \multirow{4}{*}{\(0.1219\)}
			& \multirow{4}{*}{\(0.3237\)}
			& \multirow{4}{*}{\(0.2018\)}\\
			& 0.6493 & -0.6439 & 0.7438 & -0.0234
			& 0.2494 & -0.2786 & 0.1546 & -0.1973
			& -0.7261 & -0.2926
			& -0.0711 & -0.0061
			& & &\\
			& -0.5968 & -0.3544 & -0.5478 & 0.4804
			& 0.1753 & 0.2095 & 0.1459 & 0.1236
			& -0.6446 & 0.7204
			& 0.1593 & -0.0326
			& & &\\
			& 0.6614 & 0.0750 & 0.7529 & -0.5730
			& 0.2757 & 0.2604 & -0.0647 & -0.2809
			& 0.5175 & -0.7449
			& 0.1771 & 0.0878
			& & &\\
			\midrule
			
			\multirow{4}{*}{\(53\)}
			& 0.4432 & 0.5692 & 0.3264 & 0.5487
			& -0.2992 & -0.2863 & 0.2557 & -0.0957
			& -0.1851 & -0.0422
			& -0.0318 & -0.0375
			& \multirow{4}{*}{\(0.2582\)}
			& \multirow{4}{*}{\(0.7600\)}
			& \multirow{4}{*}{\(0.5018\)}\\
			& 0.0030 & 0.2733 & -0.1894 & 0.6381
			& -0.2981 & -0.0455 & -0.1209 & -0.2171
			& 0.3131 & 0.7195
			& 0.0525 & 0.1477
			& & &\\
			& -0.1192 & 0.0377 & 0.1083 & 0.7024
			& -0.2475 & -0.0954 & -0.0971 & 0.0047
			& 0.2046 & -0.6664
			& 0.2266 & -0.0193
			& & &\\
			& 0.1780 & -0.3219 & 0.6206 & 0.5047
			& -0.1436 & 0.0248 & 0.2157 & 0.2140
			& -0.0794 & -0.3523
			& -0.0185 & 0.2165
			& & &\\
			\midrule
			
			\multirow{4}{*}{\(84\)}
			& -0.7796 & -0.6373 & -0.2225 & -0.7366
			& -0.0438 & 0.0242 & 0.2561 & 0.2983
			& -0.6315 & -0.6332
			& -0.1078 & -0.1870
			& \multirow{4}{*}{\(0.3578\)}
			& \multirow{4}{*}{\(0.6462\)}
			& \multirow{4}{*}{\(0.2884\)}\\
			& -0.1965 & -0.7372 & 0.2307 & -0.0490
			& 0.2732 & 0.1233 & -0.2950 & -0.1634
			& -0.3709 & -0.0484
			& 0.2145 & 0.1555
			& & &\\
			& -0.5315 & 0.6932 & -0.6913 & -0.5598
			& 0.1345 & -0.2970 & 0.1948 & 0.2517
			& 0.4222 & -0.4495
			& -0.1441 & -0.2810
			& & &\\
			& 0.0644 & 0.7545 & -0.4673 & 0.7861
			& 0.0485 & 0.1695 & 0.1604 & 0.0852
			& 0.4888 & 0.6763
			& 0.2268 & 0.0854
			& & &\\
			\bottomrule
	\end{tabular}}
\end{table*}
For all three large-gap instances, both the best lower bound and the best
upper bound are attained at the smallest tested value \(\tau=10^{-8}\).

\textbf{Diagnosis of the large gap instances.}
We next examine whether the three outliers are numerically consistent with
the conditions in Definition~\ref{regu} and the
necessary structural conclusions in
Theorem~\ref{thm:regular-gap-structure}. The diagnosis uses the last six
tested values of \(\tau\), ending at \(\tau=10^{-8}\).

\textbf{1) Numerical assessment of the regularity conditions.}
We next examine whether the numerical results for instances
\(1\), \(53\), and \(84\) are consistent with the conditions in
Definition~\ref{regu}. The diagnosis uses the last six tested values
\[
2.1252\times10^{-7},\quad
1.1533\times10^{-7},\quad
6.2581\times10^{-8},\quad
3.3960\times10^{-8},\quad
1.8428\times10^{-8},\quad
10^{-8}.
\]

\paragraph{(a) Nonvanishing certified gaps.}
For all three selected instances, both
\(\rho_{\rm low}^{\rm best}\) and \(\rho_{\rm up}^{\rm best}\) are attained
at the smallest tested value \(\tau=10^{-8}\), while the resulting gaps
remain \(0.2018\), \(0.5018\), and \(0.2884\), respectively. Thus, the
computed tail does not exhibit a closing of the certified gap. This
provides finite-grid evidence for the nonvanishing gap condition in
Definition~\ref{regu}.

\paragraph{(b) Multiscale eigenvalue structure.}
Let
\[
0<\lambda_1(P^{(\tau)})\le\cdots\le\lambda_4(P^{(\tau)})
\]
be the eigenvalues of \(P^{(\tau)}\). For each eigenvalue, we fit the tail
data according to
\[
\log\lambda_i(P^{(\tau)})
\approx
a_i+\alpha_i\log\tau,
\qquad i=1,\ldots,4.
\]
A fitted exponent close to \(1\) indicates a critical eigenvalue at the
\(\tau\) scale, whereas an exponent close to \(0\) is consistent with a
nonvanishing \(O(1)\) scale.

Table~\ref{tab:regular_branch_eigenvalues} reports the fitted exponents
together with the ratios of the first two eigenvalues to \(\tau\) at
\(\tau=10^{-8}\).

\begin{table}[htbp]
	\centering
	\caption{Eigenvalue scale diagnosis for the three large-gap instances}
	\label{tab:regular_branch_eigenvalues}
	\small
	\setlength{\tabcolsep}{4pt}
	\begin{tabular}{c|cccc|cc}
		\hline
		Instance
		& \(\alpha_1\)
		& \(\alpha_2\)
		& \(\alpha_3\)
		& \(\alpha_4\)
		& \(\lambda_1/\tau\)
		& \(\lambda_2/\tau\)\\
		\hline
		\(1\)
		& \(0.99952\)
		& \(0.99909\)
		& \(5.6056\times10^{-4}\)
		& \(-1.4496\times10^{-5}\)
		& \(2.7803\)
		& \(10.208\)\\
		\(53\)
		& \(1.00000\)
		& \(1.00000\)
		& \(1.1193\times10^{-6}\)
		& \(-2.7490\times10^{-7}\)
		& \(0.48926\)
		& \(0.68931\)\\
		\(84\)
		& \(0.99999\)
		& \(0.99996\)
		& \(-9.0151\times10^{-8}\)
		& \(-2.7034\times10^{-7}\)
		& \(0.86332\)
		& \(3.1837\)\\
		\hline
	\end{tabular}
\end{table}

In every instance, the first two exponents are close to \(1\), while the
last two are close to \(0\). Moreover, the ratios
\(\lambda_i(P^{(\tau)})/\tau\), \(i=1,2\), remain finite and positive over
the tested tail. The numerical eigenvalue structure is therefore
consistent with
\[
\lambda_1(P^{(\tau)}),\lambda_2(P^{(\tau)})\asymp\tau,
\]
while the remaining two eigenvalues stay on a nonvanishing scale. Hence,
all three instances have the detected dimensions
\[
r=\ell=2,\qquad s=0,\qquad q=2.
\]
In particular, there is no intermediate vanishing layer.

Figure~\ref{fig:large_gap_eig_over_tau} displays the eigenvalues after
division by \(\tau\). The first two normalized eigenvalues approach stable
positive levels over the tested tail, whereas the last two grow after this
normalization, further supporting the above classification into two
distinct scales.

\begin{figure}[htbp]
	\centering
	\begin{subfigure}{0.32\textwidth}
		\centering
		\includegraphics[width=\textwidth]
		{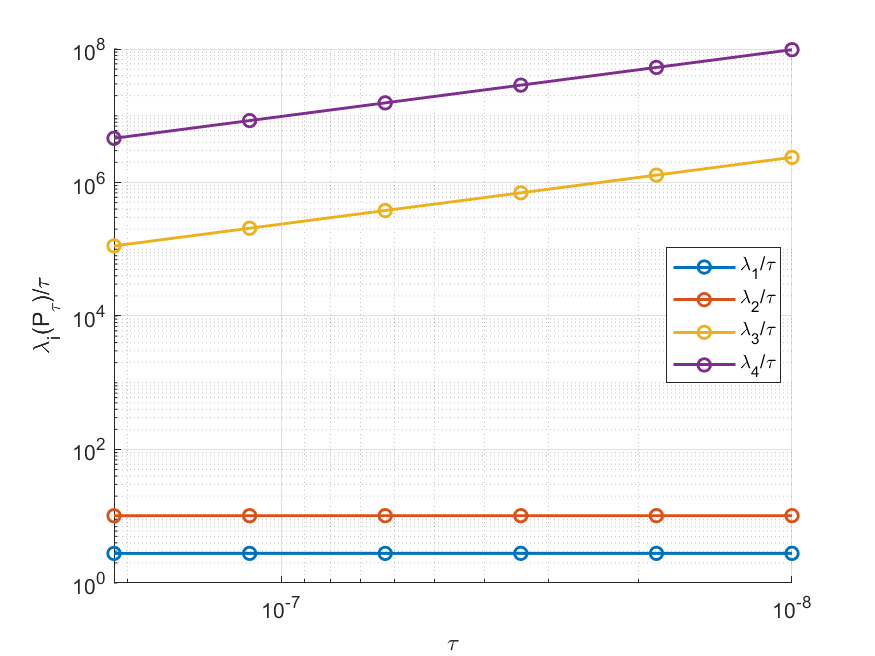}
		\caption{Instance \(1\)}
	\end{subfigure}
	\begin{subfigure}{0.32\textwidth}
		\centering
		\includegraphics[width=\textwidth]
		{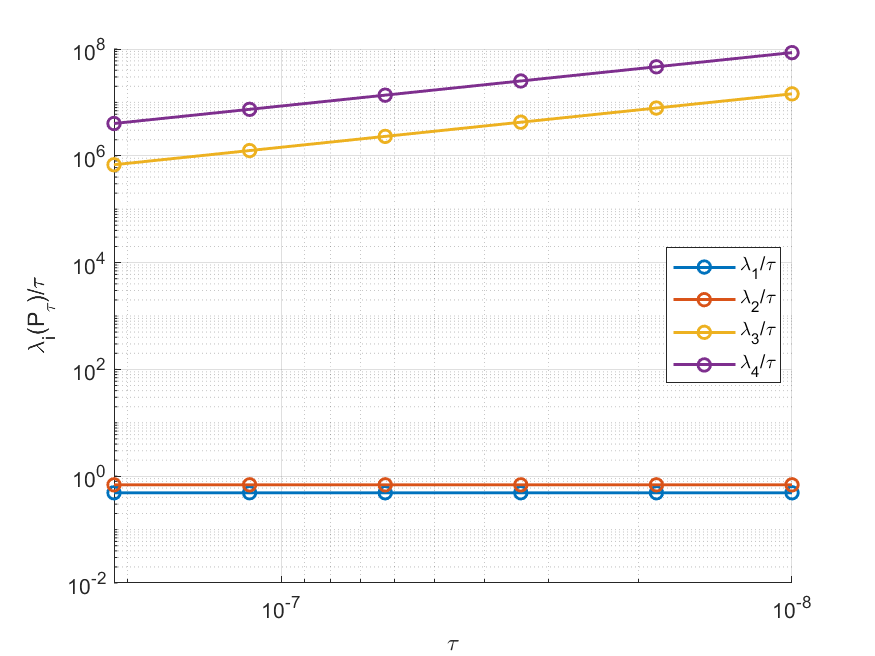}
		\caption{Instance \(53\)}
	\end{subfigure}
	\begin{subfigure}{0.32\textwidth}
		\centering
		\includegraphics[width=\textwidth]
		{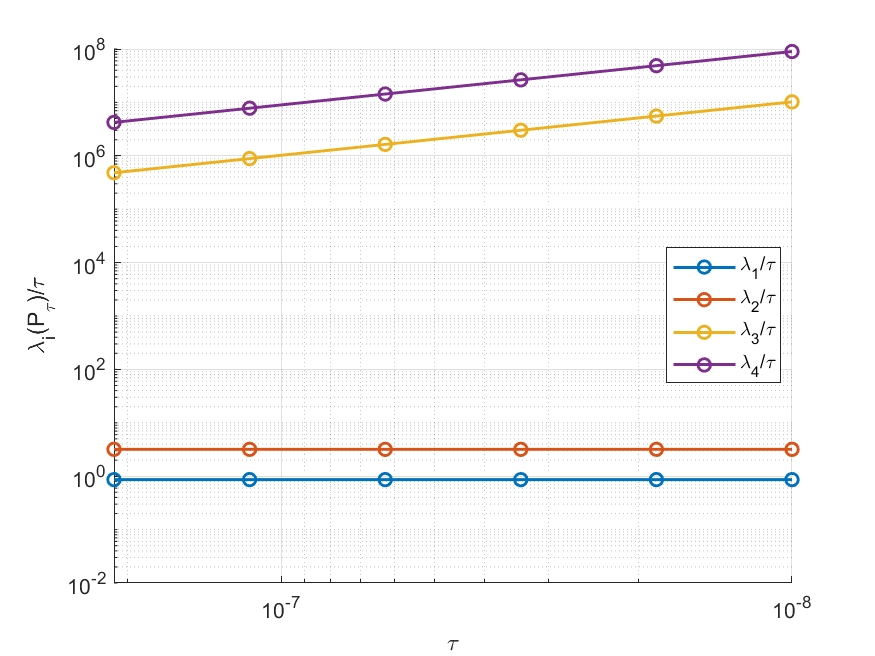}
		\caption{Instance \(84\)}
	\end{subfigure}
	\caption{Eigenvalues of \(P^{(\tau)}\) normalized by \(\tau\) over the
		tested tail.}
	\label{fig:large_gap_eig_over_tau}
\end{figure}

\paragraph{(c) Stability of the spectral projections and feedback matrices.}
Let the columns of \(X_\tau\) form an orthonormal basis of the critical
eigenspace corresponding to the first \(r=2\) eigenvalues, and let
\[
\Pi_{c,\tau}:=X_\tau X_\tau^\top.
\]
We measure the changes between adjacent tail points by
\[
D_{\Pi,k}
:=
\|\Pi_{c,\tau_{k+1}}-\Pi_{c,\tau_k}\|_F,
\qquad
D_{K,k}
:=
\|K_{\tau_{k+1}}-K_{\tau_k}\|_F,
\]
where
\[
K_\tau:=K(P^{(\tau)}).
\]
Since \(r=\ell=2\), the critical and overall vanishing layers coincide.
Consequently,
\[
\Pi_{c,\tau}=\Pi_{v,\tau},
\]
and \(D_{\Pi,k}\) simultaneously assesses the stability of both spectral
projection sequences appearing in Definition~\ref{regu}.
The resulting projection and feedback differences are summarized in
Table~\ref{tab:regular_branch_convergence}. The maximum projection
difference is reported to show the scale of the variation over the whole
tested tail, while the last differences describe the behavior nearest
the smallest tested value of \(\tau\).

\begin{table}[htbp]
	\centering
	\caption{Stability of the critical projection and feedback matrix}
	\label{tab:regular_branch_convergence}
	\small
	\setlength{\tabcolsep}{5pt}
	\begin{tabular}{c|c|c|c}
		\hline
		Instance
		& \(\max_k D_{\Pi,k}\)
		& \(D_{\Pi,\mathrm{last}}\)
		& \(D_{K,\mathrm{last}}\)\\
		\hline
		\(1\)
		& \(1.218\times10^{-4}\)
		& \(1.063\times10^{-5}\)
		& \(5.107\times10^{-5}\)\\
		\(53\)
		& \(3.062\times10^{-7}\)
		& \(2.655\times10^{-8}\)
		& \(7.975\times10^{-7}\)\\
		\(84\)
		& \(4.691\times10^{-6}\)
		& \(4.068\times10^{-7}\)
		& \(7.835\times10^{-7}\)\\
		\hline
	\end{tabular}
\end{table}

For all three instances, both \(D_{\Pi,k}\) and \(D_{K,k}\) decrease along
the tested tail. Thus, the critical eigenspaces do not exhibit persistent
rotation, and the feedback matrices become increasingly stable as
\(\tau\) decreases. These observations provide numerical support for the
spectral projection condition and the boundedness of the feedback gains
in Definition~\ref{regu}. Figure~\ref{fig:projection_diagnostics} shows
these trends in detail. For completeness, it also reports
\[
D_{P,k}
:=
\|P^{(\tau_{k+1})}-P^{(\tau_k)}\|_F,
\]
which provides supplementary evidence that the regularized fixed points
themselves become increasingly stable along the tested tail.

\begin{figure}[htbp]
	\centering
	\begin{subfigure}{0.32\textwidth}
		\centering
		\includegraphics[width=\textwidth]
		{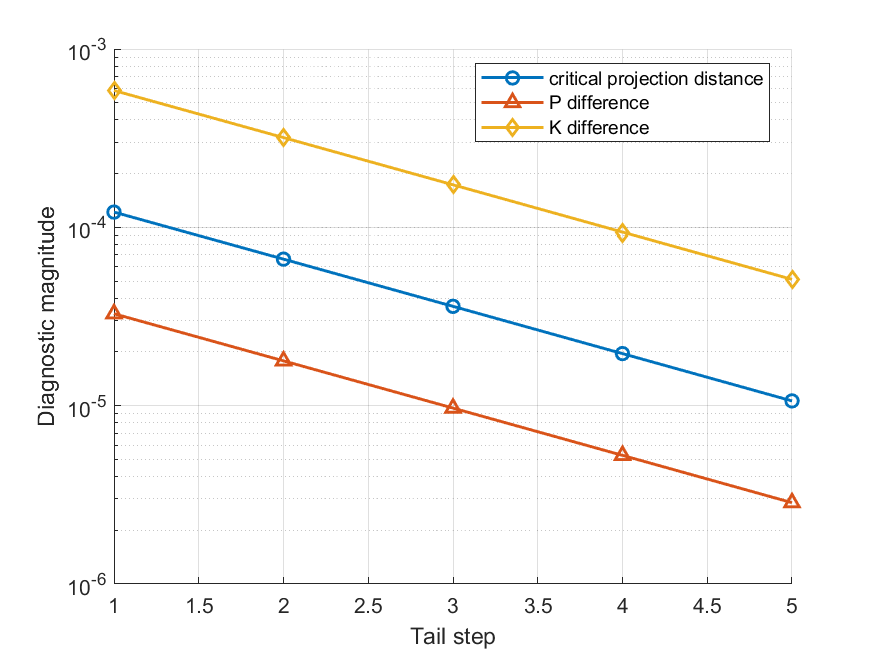}
		\caption{Instance \(1\)}
	\end{subfigure}
	\begin{subfigure}{0.32\textwidth}
		\centering
		\includegraphics[width=\textwidth]
		{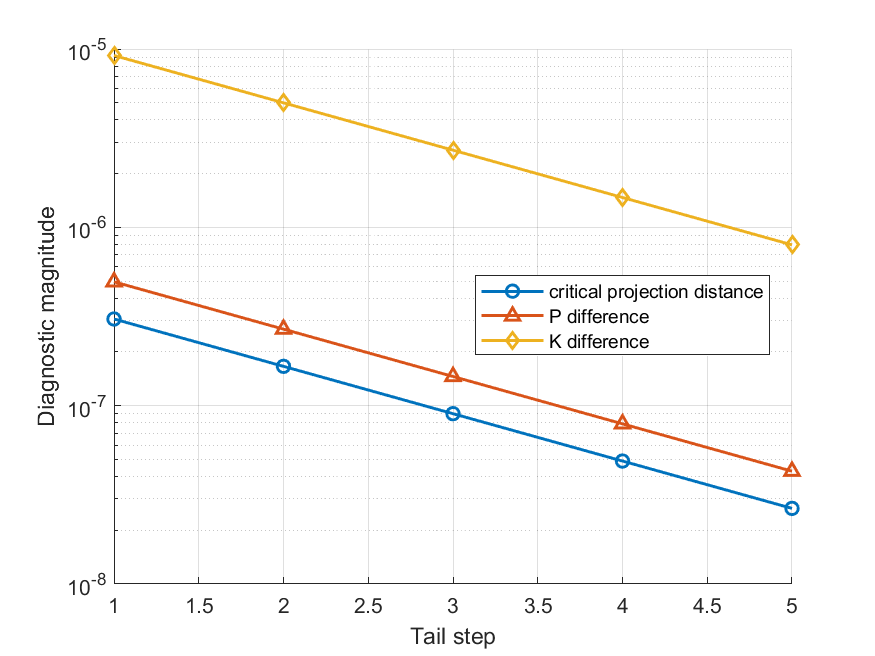}
		\caption{Instance \(53\)}
	\end{subfigure}
	\begin{subfigure}{0.32\textwidth}
		\centering
		\includegraphics[width=\textwidth]
		{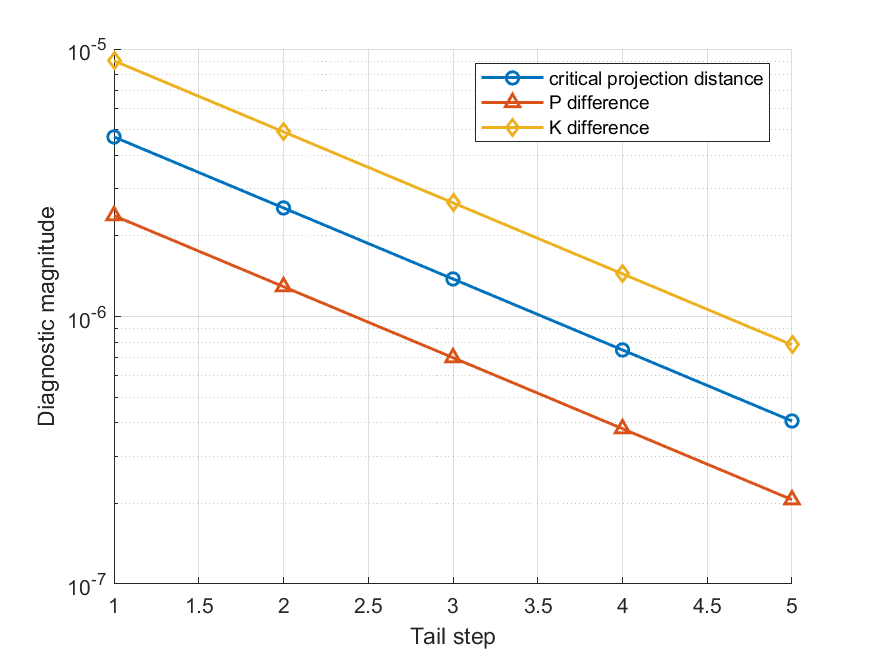}
		\caption{Instance \(84\)}
	\end{subfigure}
	\caption{Successive differences of the critical spectral projection,
		regularized fixed point, and feedback matrix over the tested tail.}
	\label{fig:projection_diagnostics}
\end{figure}

Combining the persistent terminal gaps, the eigenvalue structure with two
distinct scales, and the decreasing projection and feedback differences,
the numerical results for the three instances are consistent with the
conditions in Definition~\ref{regu}. This assessment concerns only the
conditions in that definition. The necessary invariant subspace, block
decomposition, and spectral separation conclusions are examined
separately in the next part.

\textbf{2) Necessary structural evidence.}
Having obtained numerical evidence for the regular  assumptions, we
next examine the necessary structural conclusions of
Theorem~\ref{thm:regular-gap-structure}. The calculations in this
subsection are performed at the smallest tested value
\[
\tau=10^{-8}.
\]
Let the columns of \(X\) form an orthonormal basis of the critical
eigenspace of \(P^{(\tau)}\). Since \(r=\ell=2\), the critical and
vanishing eigenspaces coincide, and there is no intermediate vanishing layer.
Thus, the columns of \(Q\) form an orthonormal basis of the orthogonal
complement of this common eigenspace, and the orthogonal matrix in the
theorem takes the form
\[
T=[X\ Q].
\]
Define
\[
K_\tau:=K(P^{(\tau)}),\qquad
A_\tau:=A-BK_\tau,\qquad
\bar A_\tau:=\bar A-\bar B K_\tau.
\]

\paragraph{(a) Simultaneous block upper triangular structure.}
To examine the simultaneous block upper triangular structure, we compute
the lower left blocks of the two closed-loop matrices under the orthogonal change of coordinates defined by \[T=[X\ Q].\]
Specifically, define
\[
R_A
:=
\frac{\|(I-XX^\top)A_\tau X\|_F}
{\|A_\tau X\|_F},
\qquad
R_{\bar A}
:=
\frac{\|(I-XX^\top)\bar A_\tau X\|_F}
{\|\bar A_\tau X\|_F},
\]
and set
\[
R_{\rm inv}:=\max\{R_A,R_{\bar A}\}.
\]
Since
\[
I-XX^\top=QQ^\top,
\]
the two numerators satisfy
\[
\|(I-XX^\top)A_\tau X\|_F
=
\|Q^\top A_\tau X\|_F
\]
and
\[
\|(I-XX^\top)\bar A_\tau X\|_F
=
\|Q^\top\bar A_\tau X\|_F,
\]
respectively. Thus, they are the Frobenius norms of the lower left
blocks of the two transformed closed loop matrices. Under the orthogonal matrix \(T=[X\ Q]\), we have
\[
T^\top A_\tau T
=
\begin{pmatrix}
	X^\top A_\tau X & X^\top A_\tau Q\\
	Q^\top A_\tau X & Q^\top A_\tau Q
\end{pmatrix}
=
\begin{pmatrix}
	A_c & X^\top A_\tau Q\\
	Q^\top A_\tau X & A_d
\end{pmatrix},
\]
and
\[
T^\top \bar A_\tau T
=
\begin{pmatrix}
	X^\top\bar A_\tau X & X^\top\bar A_\tau Q\\
	Q^\top\bar A_\tau X & Q^\top\bar A_\tau Q
\end{pmatrix}
=
\begin{pmatrix}
	\bar A_c & X^\top\bar A_\tau Q\\
	Q^\top\bar A_\tau X & \bar A_d
\end{pmatrix},
\]
where
\[
A_c:=X^\top A_\tau X,\qquad
\bar A_c:=X^\top\bar A_\tau X,
\]
and
\[
A_d:=Q^\top A_\tau Q,\qquad
\bar A_d:=Q^\top\bar A_\tau Q.
\]In particular,
\[
R_A=R_{\bar A}=0
\]
would imply
\[
Q^\top A_\tau X=0,
\qquad
Q^\top \bar A_\tau X=0.
\]

\begin{table}[htbp]
	\centering
	\caption{Residuals of the lower left blocks at
		\(\tau=10^{-8}\)}
	\label{tab:critical_invariance}
	\begin{tabular}{c|c|c|c}
		\hline
		Instance
		& \(R_A\)
		& \(R_{\bar A}\)
		& \(R_{\rm inv}\)\\
		\hline
		\(1\)
		& \(1.421\times10^{-6}\)
		& \(1.073\times10^{-5}\)
		& \(1.073\times10^{-5}\)\\
		\(53\)
		& \(5.980\times10^{-9}\)
		& \(2.153\times10^{-7}\)
		& \(2.153\times10^{-7}\)\\
		\(84\)
		& \(1.181\times10^{-7}\)
		& \(6.402\times10^{-7}\)
		& \(6.402\times10^{-7}\)\\
		\hline
	\end{tabular}
\end{table}
\noindent The small values of \(R_A\) and \(R_{\bar A}\) in Table~\ref{tab:critical_invariance} imply that
\[
Q^\top A_\tau X\approx0,
\qquad
Q^\top\bar A_\tau X\approx0.
\]
Therefore,
\[
T^\top A_\tau T
\approx
\begin{pmatrix}
	A_c & X^\top A_\tau Q\\
	0 & A_d
\end{pmatrix},
\qquad
T^\top\bar A_\tau T
\approx
\begin{pmatrix}
	\bar A_c & X^\top\bar A_\tau Q\\
	0 & \bar A_d
\end{pmatrix}.
\]
This supports the approximate
simultaneous block upper triangular forms.  Note that extremely small as it is, the value of $\tau$ is non-zero; this should be the reason of the ``approximate"
simultaneous block upper triangular forms.

\paragraph{(b) Strict mean square spectral separation.}
We next compute the squared mean square spectral radii
\[
\varrho_c
:=
\varrho_{\rm ms}(A_c,\bar A_c),
\qquad
\varrho_d
:=
\varrho_{\rm ms}(A_d,\bar A_d).
\]
To assess the relation between the dominant block and the upper
certificate, we also report
\[
E_\gamma
:=
\left|\frac{\gamma^{(\tau)}}{1-\tau}-\varrho_d\right|
\]
and
\[
E_\rho
:=
\left|
\rho_{\rm up}(\tau)-\sqrt{\varrho_d}
\right|.
\]

\begin{table}[htbp]
	\centering
	\caption{Mean square spectral separation and upper certificate matching
		at \(\tau=10^{-8}\)}
	\label{tab:necessary_structure_diagnosis}
	\small
	\setlength{\tabcolsep}{4pt}
	\begin{tabular}{c|c|c|c|c|c}
		\hline
		Instance
		& \(\varrho_c\)
		& \(\varrho_d\)
		& \(\varrho_c/\varrho_d\)
		& \(E_\gamma\)
		& \(E_\rho\)\\
		\hline
		\(1\)
		& \(1.9061\times10^{-2}\)
		& \(1.0478\times10^{-1}\)
		& \(1.8192\times10^{-1}\)
		& \(4.133\times10^{-6}\)
		& \(6.385\times10^{-6}\)\\
		\(53\)
		& \(1.2019\times10^{-1}\)
		& \(5.7766\times10^{-1}\)
		& \(2.0807\times10^{-1}\)
		& \(8.860\times10^{-8}\)
		& \(6.209\times10^{-8}\)\\
		\(84\)
		& \(3.3363\times10^{-1}\)
		& \(4.1759\times10^{-1}\)
		& \(7.9894\times10^{-1}\)
		& \(2.182\times10^{-7}\)
		& \(1.721\times10^{-7}\)\\
		\hline
	\end{tabular}
\end{table}

Table~\ref{tab:necessary_structure_diagnosis} shows that all three
instances satisfy
\[
\varrho_c<\varrho_d.
\]
The corresponding ratios \(\varrho_c/\varrho_d\) are approximately
\(0.182\), \(0.208\), and \(0.799\), respectively, and are all strictly
smaller than one. Thus, all three instances exhibit the
critical and dominant block spectral separation required by
Theorem~\ref{thm:regular-gap-structure}.
Moreover, the small values of \(E_\gamma\) show that
\[
\gamma^{(\tau)}\approx\varrho_d.
\]
Since the best upper certificate for each instance is attained at
\(\tau=10^{-8}\), the small values of \(E_\rho\) also give
\[
\rho_{\rm up}^{\rm best}
=
\rho_{\rm up}(10^{-8})
\approx
\sqrt{\varrho_d}.
\]
This numerical matching is consistent with Corollary~\ref{up}, which
states that along a regular nonvanishing certified gap,
\[
\lim_{j\to\infty}\rho_{\rm up}(\tau_j)
=
\sqrt{\varrho_{\rm ms}(A_d,\bar A_d)}.
\]

Combining the small invariance residuals, the approximate simultaneous
block upper triangular forms, and the strict inequality
\[
\varrho_c<\varrho_d,
\]
the three large gap instances are numerically consistent with the
necessary structural conclusions of
Theorem~\ref{thm:regular-gap-structure}. The close agreement between the
dominant block spectral level and the upper certificate further supports
the limiting relation in Corollary~\ref{up}.

\subsection{Four-Dimensional System Near the Stability Boundary}
\label{subsec:4d-near-marginal}

This example consists of two complementary parts.  
First, we examine a single near-marginal instance for which it is not clear \emph{a priori} whether mean-square stabilization is possible; the RNVI framework confirms that this instance is indeed stabilizable and provides tight, verifiable bounds for its optimal stabilizing rate.   
Second, we perform a \emph{scaling experiment across the stability boundary} to study how the certified bounds evolve as the system gradually transitions from the strictly stable region to the unstable regime.

\subsubsection{Single Near-Marginal Instance }
\label{subsubsec:4d-single}

\textbf{Setup.}
We consider a four-dimensional stochastic system
with $\sigma=2$,
\begin{align*}
	&A=\begin{bmatrix}
		0.9999 & 0.34   & 0      & 0\\
		0      & 0.9996 & 0.25   & 0\\
		0      & 0      & 0.9992 & 0.22\\
		0      & 0      & 0      & 0.9988
	\end{bmatrix},\\[0.7em]
	&\qquad\;\bar A=\begin{bmatrix}
		0.16 & 0.06 & 0    & 0\\
		0.05 & 0.13 & 0.05 & 0\\
		0    & 0.04 & 0.11 & 0.05\\
		0    & 0    & 0.03 & 0.10
	\end{bmatrix},\\[0.4em]
	&B=\begin{bmatrix}
		0.0024 & 0\\
		0      & 0.05\\
		0.22   & 0\\
		0      & 0.14
	\end{bmatrix},\;
	\bar B=\begin{bmatrix}
		0.375 & 0\\
		0     & 0.25\\
		0.15  & 0\\
		0     & 0.15
	\end{bmatrix}.
\end{align*}
\begin{figure}[!htb]
	\centering
	\begin{subfigure}[t]{0.48\linewidth}
		\centering
		\includegraphics[width=1.0\linewidth]{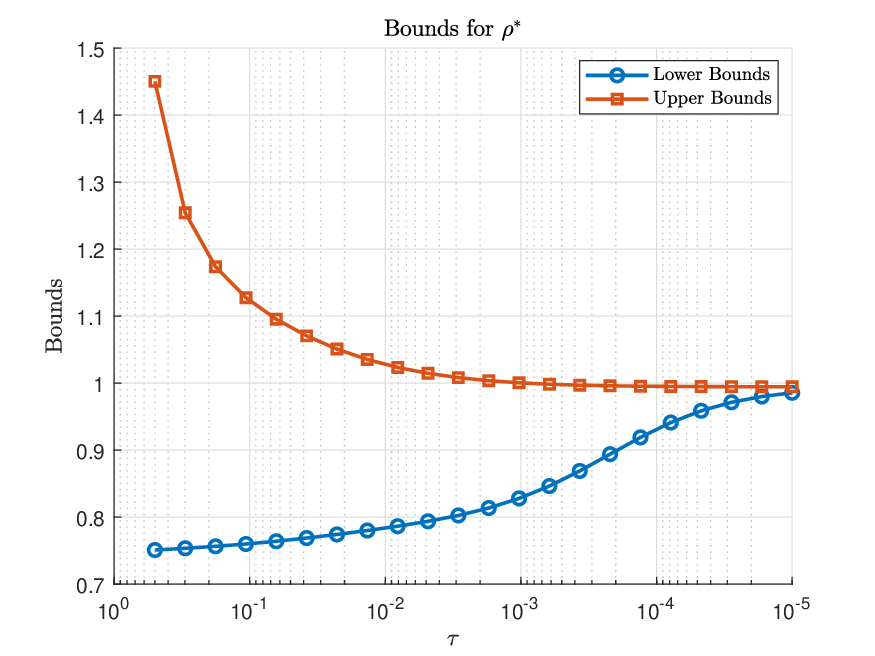}
		\caption{Certified bounds for $\rho^*$ over the $\tau$-grid.}
		\label{fig:4d-bounds}
	\end{subfigure}
	\hfill
	\begin{subfigure}[t]{0.48\linewidth}
		\centering
		\includegraphics[width=1.0\linewidth]{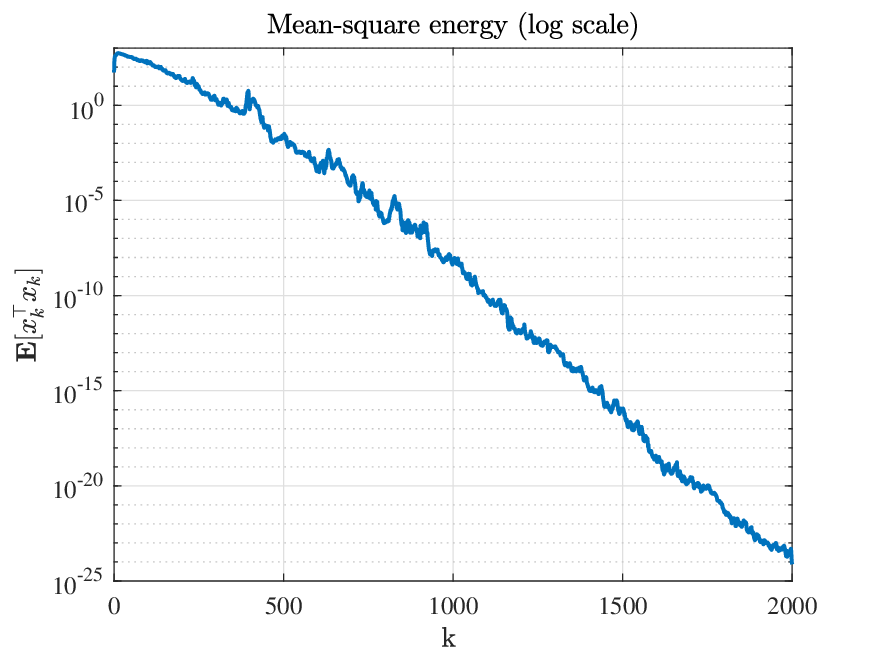}
		\caption{Mean-square energy trajectory.}
		\label{fig:4d-ms-energy}
	\end{subfigure}
	\caption{Behavior of RNVI 
		for the near-marginally stable system.}
	\label{fig:4d-certified-and-ms}
\end{figure}

Stabilizing this system is challenging, due to the following features.
\begin{enumerate}
	\item  The noise-free, uncontrolled part of the system already lies at the boundary of stability, since $A$ has eigenvalues very close to one, and even weak multiplicative noise can readily push the overall dynamics into an unstable regime.
	\item  The input matrices $B$ and $\bar B$ contain many zero entries. Several state coordinates, especially those associated with the weakest modes of $A$, receive very limited actuation.
	As a result, the controller has little direct authority over exactly the states that require the most stabilization effort.
	\item  Multiplicative noise is injected precisely in these weakly controlled directions, with additional coupling through $\bar A$. 
\end{enumerate}
Together, these effects make it entirely possible that this system is not mean-square stabilizable, thus the optimal stabilizing rate $\rho^*$ is not known a priori to be smaller than one and might in fact be arbitrarily close to or even exceed one.

\textbf{Results.}
For a decreasing logarithmic grid of $\tau\in[0.5,\,10^{-5}]$, 
Figure~\ref{fig:4d-certified-and-ms}(a) shows the bounds for $\rho^*$ across~$\tau$ and the best bounds are
\[
\rho^*\in
\big[0.9856,\, 0.9946\big],
\]
achieved at 
$
\tau_{\mathrm{low}} =
\tau_{\mathrm{up}} = 1\times10^{-5}
$
which yields the near-optimal feedback stabilizing gain
\[
K_{\mathrm{up}}
=
\begin{bmatrix}
	0.8553 & 2.2097&1.5573&-0.9092\\
	-0.7399 & -1.5910&-0.3794&1.8438
\end{bmatrix}.
\]

To further illustrate the closed-loop behavior under this near-optimal gain, 
we simulate the mean-square evolution of the system with deterministic initial condition $x_0=(5,4,3,2)^\top$ and $u_k=-K_{\mathrm{up}}x_k$.  
The quantity $\mathbb{E}[x_k^\top x_k]$ is approximated by averaging over $10^4$ i.i.d.\ trajectories.  
The resulting curve on a logarithmic scale (Figure \ref{fig:4d-certified-and-ms}(b)) exhibits a clear linear trend with negative slope, demonstrating exponential mean-square decay under the RNVI derived gain.

\textbf{Discussion.}
This example highlights the effectiveness of our method in a particularly
challenging setting: although the system is a priori close to being
unstabilizable, our certified bounds establish that $\rho^*<1$, thereby
confirming mean-square stabilizability, and the RNVI framework further
produces a near-optimal gain $K_{\mathrm{up}}$ that achieves exponential
decay in practice.

\subsubsection{Scaling Study Across the Stability Boundary}
\label{subsubsec:4d-scaling}

\textbf{Objective and Methodology.}
To stress-test RNVI beyond a single operating point, we smoothly vary the system across the marginal regime.
We introduce a global scaling factor $\theta\in[0.95,1.10]$ and consider $A_\theta=\theta A$, while keeping $(\bar A,B,\bar B,\sigma)$ fixed.
This radial scaling moves the eigenvalues of $A$ with respect to the unit circle: $\theta<1$ pushes them inward, $\theta=1$ recovers the near-marginal instance, and $\theta>1$ moves the  dynamics beyond the boundary.
For each $\theta$, we run RNVI over a decreasing grid $\tau \in[0.5,\,10^{-5}]$ and report the tightest \emph{certified bounds} for $\rho^*$.

\textbf{Results.}
Table~\ref{tab:gamma-bounds} summarizes the bounds for $\rho^*$ at $20$ values of $\theta$.
For a stabilizable case such as $\theta_1=1.005$, the tightest certified upper
bound on $\rho^\ast$ is attained at $\tau=10^{-5}$, yielding the near-optimal feedback stabilizing gain
\[
K_{1}=
\begin{bmatrix}
	0.8558 & 2.2192 & 1.5695 & -0.9108\\
	-0.7380 & -1.5940 & -0.3847 & 1.8506
\end{bmatrix}.
\]
In contrast, for a nearby parameter value $\theta_2=1.021$, where the certified
interval satisfies $\rho^\ast>1$ and hence stabilization is impossible, the
RNVI procedure returns the gain
\[
K_{2}=
\begin{bmatrix}
	0.8570 & 2.2475 & 1.6058 & -0.9154\\
	-0.7322 & -1.6023 & -0.4000 & 1.8707
\end{bmatrix}.
\]
\begin{table}[H]
	\centering
	\caption{Certified bounds and gaps for \(\rho^\ast\) under global
		scaling \(A_\theta\) in the four-dimensional system}
	\label{tab:gamma-bounds}
	\resizebox{0.75\textwidth}{!}{
		\begin{tabular}{c c c@{\hskip 0.35cm}|c c c}
			\hline
			$\theta$ & Bounds for $\rho^\ast$ & Gap
			& $\theta$ & Bounds for $\rho^\ast$ & Gap\\
			\hline
			0.950 & [0.9360, 0.9492] & 0.0132
			& 1.029 & [1.0132, 1.0208] & 0.0076\\
			0.958 & [0.9441, 0.9564] & 0.0123
			& 1.036 & [1.0206, 1.0279] & 0.0073\\
			0.966 & [0.9521, 0.9635] & 0.0114
			& 1.045 & [1.0281, 1.0351] & 0.0070\\
			0.974 & [0.9599, 0.9707] & 0.0108
			& 1.053 & [1.0355, 1.0422] & 0.0067\\
			0.981 & [0.9677, 0.9778] & 0.0101
			& 1.061 & [1.0429, 1.0494] & 0.0065\\
			0.989 & [0.9754, 0.9850] & 0.0096
			& 1.068 & [1.0503, 1.0566] & 0.0063\\
			0.997 & [0.9831, 0.9922] & 0.0091
			& 1.076 & [1.0577, 1.0637] & 0.0060\\
			1.005 & [0.9907, 0.9993] & 0.0086
			& 1.084 & [1.0650, 1.0709] & 0.0059\\
			1.013 & [0.9982, 1.0065] & 0.0083
			& 1.092 & [1.0723, 1.0780] & 0.0057\\
			1.021 & [1.0057, 1.0136] & 0.0079
			& 1.100 & [1.0797, 1.0852] & 0.0055\\
			\hline
		\end{tabular}
	}
\end{table}

\textbf{Discussion.}
Three points emerge clearly from the scaling study.
\begin{enumerate}
	\item \textbf{Localization of the stability threshold.}
	For \(\theta\leq1.005\), the certified intervals lie below one and
	certify mean-square stabilizability. For \(\theta\geq1.021\), both
	bounds exceed one, ruling out mean-square stabilizability and
	quantifying the minimal achievable exponential growth rate. Hence, the
	stability threshold is localized near \(\theta\approx1.013\); see
	Tab.~\ref{tab:gamma-bounds}.
	
	\item \textbf{Robustness near and beyond the threshold.}
	The certified intervals vary smoothly and become slightly narrower as
	\(\theta\) crosses the stability threshold, indicating that the RNVI
	certificates remain numerically well behaved near and beyond the loss
	of stabilizability.
	
	\item \textbf{Certification and controller synthesis.}
	For each \(\theta\), RNVI provides two-sided bounds on \(\rho^\ast\)
	and an associated feedback gain, such as \(K_1\) at \(\theta=1.005\)
	and \(K_2\) at \(\theta=1.021\).
\end{enumerate}

\section{Conclusion}\label{sec:conclusion}
We develop a quantitative framework for analyzing mean-square exponential
stabilization of stochastic systems with multiplicative noise. The study
focuses on the optimal stabilizing rate \(\rho^{*}\), which characterizes the
fastest achievable mean-square exponential decay rate. By extending the
norm-based analysis to the stochastic setting, we establish quantitative
bounds for the optimal stabilizing rate. Furthermore, within the
state-feedback policy class, we introduce an optimal control formulation for
Problem (OSR), derive a Bellman-type characterization, and reduce it to a
nonlinear matrix eigenvalue problem. To address the possible lack of
strictly positive-definite solutions, we introduce a regularized
approximation scheme and develop the RNVI algorithm. The resulting
regularized fixed points generate computable feedback gains and certified
two-sided bounds for the optimal stabilizing rate. We also analyze the gap
between the certified bounds, including a sufficient condition for gap
closing and a necessary structural condition for nonvanishing gaps.

These results provide both theoretical insights and practical tools for
quantitative stabilization analysis under multiplicative noise. Future work
may extend the framework toward data-driven formulations, robustness against
modeling errors, and scalable algorithms for high-dimensional systems.

\bibliographystyle{plainnat}
\bibliography{JiaHui}

\appendix

\section{Proof of the Structural Condition for Nonvanishing Certified Gaps}
\label{sec:large-gap-diagnosis}

\setcounter{lemma}{0}
\setcounter{proposition}{0}
\setcounter{theorem}{0}

\renewcommand{\thelemma}{A.\arabic{lemma}}
\renewcommand{\theproposition}{A.\arabic{proposition}}
\renewcommand{\thetheorem}{A.\arabic{theorem}}

This appendix provides the proof of Theorem~\ref{thm:regular-gap-structure}.
The proof relies on several auxiliary results concerning the limiting
behavior of the regularized fixed-point sequence and the induced spectral
structure. We first establish these intermediate results and then combine
them to prove the theorem.

\subsection*{A.1 A general invariant-subspace property}

We first characterize a weaker but more general necessary structural
consequence that follows directly from a nonvanishing certified gap.
Unlike the stronger block separation result developed later, this result
does not require the additional regularity conditions introduced below
and only identifies the existence of a common invariant subspace.

For each \(\tau\in(0,1)\), define
\[
K_\tau:=K(P^{(\tau)}),\qquad
A_\tau:=A-BK_\tau,\qquad
\bar A_\tau:=\bar A-\bar B K_\tau,\qquad
\gamma_\tau:=\gamma^{(\tau)},
\]
where \(K(P^{(\tau)})\) denotes the feedback matrix induced by
\(P^{(\tau)}\) defined in (\ref{eq-K}).

\begin{proposition}[Common Invariant Subspace Structure]\label{pre-pro}
	Assume that there exists a sequence
	\(\{\tau_k\}_{k\ge1}\subset(0,1)\) such that
	\[
	\lim_{k\to\infty}\tau_k=0,
	\qquad
	\liminf_{k\to\infty}\Delta_{\tau_k}>0.
	\]
	Let
	\[
	P_k:=P^{(\tau_k)},
	\qquad
	K_k:=K_{\tau_k}.
	\]
	If the feedback sequence \(\{K_k\}_{k\ge1}\) is bounded, then there
	exist a subsequence
	\[
	\{(\tau_{k_j},P_{k_j})\}_{j\ge1},
	\]
	matrices \(P^*\succeq0\) and \(K^*\), and a nonzero proper subspace
	\[
	\mathcal V:=\ker P^*\subsetneq\mathbb R^n
	\]
	such that
	\[
	\lim_{j\to\infty}P_{k_j}=P^*,
	\qquad
	\lim_{j\to\infty}K_{k_j}=K^*,
	\]
	and
	\[
	(A-BK^*)\mathcal V\subseteq\mathcal V,
	\qquad
	(\bar A-\bar B K^*)\mathcal V\subseteq\mathcal V.
	\]
	In other words, under boundedness of the feedback gains, a
	nonvanishing certified gap implies that a limiting closed-loop matrix
	pair along a subsequence possesses a nontrivial common invariant
	subspace.
\end{proposition}

\begin{proof}
	Since
	\[
	\liminf_{k\to\infty}\Delta_{\tau_k}>0,
	\]
	there exist \(c_0>0\) and \(k_0\ge1\) such that
	\[
	\Delta_{\tau_k}\ge c_0,
	\qquad k\ge k_0.
	\]
	Hence, the certified gap identity gives
	\[
	\lambda_{\min}(P_k)
	\le
	\frac{\tau_k}{n(1-\tau_k)c_0},
	\qquad k\ge k_0.
	\]
	It follows that
	\[
	\lim_{k\to\infty}\lambda_{\min}(P_k)=0.
	\]
	
	Moreover,
	\[
	P_k\succeq0,
	\qquad
	\operatorname{Tr}(P_k)=1,
	\]
	and hence \(\{P_k\}_{k\ge1}\) is contained in the compact set
	\[
	\{P\succeq0:\operatorname{Tr}(P)=1\}.
	\]
	The sequence \(\{K_k\}_{k\ge1}\) is bounded by assumption, and
	\(\{\gamma_{\tau_k}\}_{k\ge1}\) is bounded by the trace estimate in
	Theorem~\ref{thm-regularized}. Therefore, there exist a subsequence
	\[
	\{(\tau_{k_j},P_{k_j})\}_{j\ge1},
	\]
	matrices \(P^*\succeq0\) and \(K^*\), and a scalar \(\gamma^*\ge0\)
	such that
	\[
	\lim_{j\to\infty}P_{k_j}=P^*,
	\qquad
	\lim_{j\to\infty}K_{k_j}=K^*,
	\qquad
	\lim_{j\to\infty}\gamma_{\tau_{k_j}}=\gamma^*.
	\]
	In addition,
	\[
	\operatorname{Tr}(P^*)=1.
	\]
	By continuity of the smallest eigenvalue,
	\[
	\lambda_{\min}(P^*)=0.
	\]
	Consequently,
	\[
	P^*\neq0,
	\qquad
	\ker P^*\neq\{0\}.
	\]
	
	Define
	\[
	A^*:=A-BK^*,
	\qquad
	\bar A^*:=\bar A-\bar B K^*.
	\]
	The regularized fixed-point equation~(\ref{tau-fix}) can be written as
	\[
	\gamma_{\tau_{k_j}}P_{k_j}
	=
	(1-\tau_{k_j})
	\left(
	A_{k_j}^\top P_{k_j}A_{k_j}
	+\sigma^2\bar A_{k_j}^\top P_{k_j}\bar A_{k_j}
	\right)
	+\frac{\tau_{k_j}}{n}I,
	\]
	where
	\[
	A_{k_j}:=A-BK_{k_j},
	\qquad
	\bar A_{k_j}:=\bar A-\bar B K_{k_j}.
	\]
	Taking limits in this equation gives
	\[
	\gamma^*P^*
	=
	(A^*)^\top P^*A^*
	+\sigma^2(\bar A^*)^\top P^*\bar A^*.
	\]
	
	Let
	\[
	\mathcal V:=\ker P^*.
	\]
	Since \(P^*\neq0\) and \(\ker P^*\neq\{0\}\), \(\mathcal V\) is a
	nonzero proper subspace. For any \(x\in\mathcal V\), we have
	\(P^*x=0\). Multiplying the limiting equation by \(x^\top\) and \(x\)
	gives
	\[
	0
	=
	(A^*x)^\top P^*(A^*x)
	+\sigma^2(\bar A^*x)^\top P^*(\bar A^*x).
	\]
	Since \(P^*\succeq0\), both terms are nonnegative. Therefore,
	\[
	(A^*x)^\top P^*(A^*x)=0,
	\qquad
	(\bar A^*x)^\top P^*(\bar A^*x)=0.
	\]
	For a positive semidefinite matrix, \(y^\top P^*y=0\) implies
	\(P^*y=0\). Hence,
	\[
	P^*A^*x=0,
	\qquad
	P^*\bar A^*x=0,
	\]
	which gives
	\[
	A^*x\in\ker P^*,
	\qquad
	\bar A^*x\in\ker P^*.
	\]
	Since \(x\in\mathcal V\) is arbitrary,
	\[
	A^*\mathcal V\subseteq\mathcal V,
	\qquad
	\bar A^*\mathcal V\subseteq\mathcal V.
	\]
	Equivalently,
	\[
	(A-BK^*)\mathcal V\subseteq\mathcal V,
	\qquad
	(\bar A-\bar B K^*)\mathcal V\subseteq\mathcal V.
	\]
	This proves the proposition.
\end{proof}

\begin{remark}
	The above proposition provides a weaker but more general necessary
	structural condition for nonvanishing certified gaps under boundedness
	of the corresponding feedback gains. Here, the degenerate subspace is
	the kernel of the singular limiting matrix \(P^*\), namely,
	\(\ker P^*\), on which the limiting quadratic form
	\(x^\top P^*x\) vanishes. The proposition shows that this subspace is
	invariant under both limiting closed-loop matrices.
\end{remark}

However, the existence of such a common invariant subspace only identifies
the location of the degeneracy. For a regular nonvanishing-gap fixed-point
sequence, the block separation result below provides a sharper
characterization by showing that the critical and dominant blocks must
satisfy a strict mean-square spectral separation.

\subsection*{A.2 Proof of Theorem \ref{thm:regular-gap-structure}}

We first
introduce the following lemma.
\begin{lemma}\label{lem:positive-eigenmatrix-spr}
	Let
	$
	\mathcal L:\mathbb S^n\rightarrow\mathbb S^n
	$
	be the linear operator defined by
	$
	\mathcal L(P)
	=
	A^\top P A
	+
	\sigma^2\bar A^\top P\bar A .
	$
	Suppose that there exist a scalar \(\gamma\ge0\) and a matrix
	\(P\succ0\) such that
	$
	\mathcal L(P)=\gamma P .
	$
	Then, it holds
	$
	\operatorname{spr}(\mathcal L)=\gamma .
	$
\end{lemma}

\begin{proof}
	For the given matrix \(P\succ0\), and for any \(X\in\mathbb S^n\),
	there exists \(c>0\) such that
	\[
	-cP\preceq X\preceq cP.
	\]
	Indeed, since \(P\succ0\), the symmetric matrix
	\(P^{-1/2}XP^{-1/2}\) has bounded eigenvalues.
	Define
	\[
	\|X\|_{P}
	:=
	\inf\{c>0:\ -cP\preceq X\preceq cP\},
	\qquad X\in\mathbb S^n .
	\]
	Moreover, from
	\[
	-cP\preceq X\preceq cP
	\quad\Longleftrightarrow\quad
	-cI\preceq P^{-1/2}XP^{-1/2}\preceq cI,
	\]
	it is easy to verify that
	\[
	\|X\|_{P}
	=
	\big\|P^{-1/2}XP^{-1/2}\big\|_2 .
	\]
	Thus, \(\|\cdot\|_P\) is a norm, with its norm properties inherited from
	the spectral norm.
	The operator \(\mathcal L\) is monotone with respect to the Loewner order.
	Therefore, whenever \(-cP\preceq X\preceq cP\), we have
	\[
	-c\mathcal L(P)
	\preceq
	\mathcal L(X)
	\preceq
	c\mathcal L(P).
	\]
	Using \(\mathcal L(P)=\gamma P\), we obtain
	\[
	-c\gamma P
	\preceq
	\mathcal L(X)
	\preceq
	c\gamma P .
	\]
	Thus, for each \(c>0\) satisfying
	\(-cP\preceq X\preceq cP\), it holds that
	\[
	\|\mathcal L(X)\|_{P}\le \gamma c .
	\]
	Taking the infimum over all such \(c\) yields
	\[
	\|\mathcal L(X)\|_{P}\le \gamma\|X\|_{P},
	\qquad X\in\mathbb S^n .
	\]
	Let \(\|\mathcal L\|_{P}\) be the operator norm induced by
	\(\|\cdot\|_P\), namely
	\[
	\|\mathcal L\|_{P}
	:=
	\sup_{X\ne0}
	\frac{\|\mathcal L(X)\|_{P}}{\|X\|_{P}} .
	\]
	The preceding inequality implies
	$
	\|\mathcal L\|_{P}\le \gamma .
	$
	By the standard bound of the spectral radius by any induced operator norm
	on a finite-dimensional linear space, we have
	\[
	\operatorname{spr}(\mathcal L)
	\le
	\|\mathcal L\|_{P}
	\le
	\gamma .
	\]
	On the other hand, \(\mathcal L(P)=\gamma P\) and \(P\ne0\), so
	\(\gamma\) is an eigenvalue of \(\mathcal L\). Hence, it holds
	\[
	\operatorname{spr}(\mathcal L)\ge |\gamma|.
	\]
	Combining the two inequalities yields
	$
	\operatorname{spr}(\mathcal L)=\gamma .
	$
\end{proof}
For any matrix pair \((F,\bar F)\), define its squared mean square
spectral radius by
\[
\varrho_{\rm ms}(F,\bar F)
:=
\operatorname{spr}
\left(
F\otimes F+\sigma^2\bar F\otimes\bar F
\right),
\]
We now prove Theorem~\ref{thm:regular-gap-structure}.
\begin{proof}
	
	\noindent\textbf{Step 1: Invariance of the critical layer.}
	
	For each \(k\), define
	\[
	\gamma_k
	:=
	\operatorname{Tr}\!\left(
	(1-\tau_k)\Phi(P_k)+\frac{\tau_k}{n}I
	\right).
	\]
	Since
	\[
	P_k\succeq0,
	\qquad
	\operatorname{Tr}(P_k)=1,
	\]
	the sequence \(\{P_k\}_{k\ge1}\) is contained in the compact set
	\[
	\left\{
	P\in\mathbb S_+^n:
	\operatorname{Tr}(P)=1
	\right\}.
	\]
	Moreover, \(\{K_k\}_{k\ge1}\) is bounded by
	Definition \ref{regu} of a regular nonvanishing-gap fixed-point sequence, and
	\(\{\gamma_k\}_{k\ge1}\) is bounded by the trace estimate in
	Theorem~\ref{thm-regularized}. Therefore, there exist a subsequence
	\[
	\{(\tau_{k_j},P_{k_j})\}_{j\ge1},
	\]
	matrices \(P^*\succeq0\) and \(K^*\), and a scalar
	\(\gamma^*\ge0\) such that
	\[
	\lim_{j\to\infty}P_{k_j}=P^*,
	\qquad
	\lim_{j\to\infty}K_{k_j}=K^*,
	\qquad
	\lim_{j\to\infty}\gamma_{k_j}=\gamma^*.
	\]
	Consequently,
	\[
	\lim_{j\to\infty}(A-BK_{k_j})=A_{K^*},
	\qquad
	\lim_{j\to\infty}(\bar A-\bar B K_{k_j})
	=
	\bar A_{K^*}.
	\]
	The regularized fixed-point equation is
	\[
	\gamma_{k_j}P_{k_j}
	=
	(1-\tau_{k_j})
	\left(
	A_{k_j}^{\top}P_{k_j}A_{k_j}
	+
	\sigma^2\bar A_{k_j}^{\top}P_{k_j}\bar A_{k_j}
	\right)
	+
	\frac{\tau_{k_j}}{n}I,
	\]
	where
	\[
	A_{k_j}:=A-BK_{k_j},
	\qquad
	\bar A_{k_j}:=\bar A-\bar B K_{k_j}.
	\]
	Taking limits gives
	\[
	\gamma^*P^*
	=
	A_{K^*}^{\top}P^*A_{K^*}
	+
	\sigma^2\bar A_{K^*}^{\top}P^*\bar A_{K^*}.
	\]
	By the preceding Proposition \ref{pre-pro}, the degenerate subspace
	\[
	\mathcal V:=\ker P^*
	\]
	is a nonzero proper subspace invariant under both limiting closed-loop
	matrices:
	\[
	A_{K^*}\mathcal V\subseteq\mathcal V,
	\qquad
	\bar A_{K^*}\mathcal V\subseteq\mathcal V.
	\]
	We next show that the smaller critical subspace contained in
	\(\mathcal V\) is also invariant.	
	For each \(k\), let
	\[
	\mathcal W_k:=\operatorname{Im}\Pi_k^{\rm c}
	\]
	denote the critical eigenspace associated with the first \(r\)
	eigenvalues of \(P_k\). Since
	\[
	\lim_{k\to\infty}
	\|\Pi_k^{\rm c}-\Pi^{\rm c}\|=0,
	\]
	the matrices \(X_k\) can be selected such that, along the subsequence
	\(\{k_j\}_{j\ge1}\),
	\[
	\lim_{j\to\infty}X_{k_j}=X,
	\]
	where the columns of \(X\) form an orthonormal basis of
	$
	\mathcal W:=\operatorname{Im}\Pi^{\rm c}.
	$
	By Definition \ref{regu}, there exist constants
	\(0<c\leq C<\infty\) such that, for all sufficiently large \(j\),
	\[
	c\tau_{k_j}I_r
	\preceq
	X_{k_j}^{\top}P_{k_j}X_{k_j}
	\preceq
	C\tau_{k_j}I_r.
	\]
	Hence,
	\[
	cI_r
	\preceq
	\frac{1}{\tau_{k_j}}
	X_{k_j}^{\top}P_{k_j}X_{k_j}
	\preceq
	CI_r.
	\]
	The subsequence \(\{k_j\}_{j\ge1}\) can therefore be chosen so that
	\[
	\lim_{j\to\infty}
	\frac{1}{\tau_{k_j}}
	X_{k_j}^{\top}P_{k_j}X_{k_j}
	=
	Z_c
	\]
	for some \(Z_c\succ0\).
	Multiplying the regularized fixed-point equation from the left and
	right by \(X_{k_j}^{\top}\) and \(X_{k_j}\), respectively, and
	dividing by \(\tau_{k_j}\), gives
	\[
	\begin{aligned}
		\gamma_{k_j}
		\frac{X_{k_j}^{\top}P_{k_j}X_{k_j}}{\tau_{k_j}}
		={}&
		(1-\tau_{k_j})
		\left(
		\frac{
			X_{k_j}^{\top}A_{k_j}^{\top}
			P_{k_j}A_{k_j}X_{k_j}}
		{\tau_{k_j}}
		\right.\left.
		+
		\sigma^2
		\frac{
			X_{k_j}^{\top}\bar A_{k_j}^{\top}
			P_{k_j}\bar A_{k_j}X_{k_j}}
		{\tau_{k_j}}
		\right)
		+
		\frac{1}{n}I_r.
	\end{aligned}
	\]
	The left-hand side is bounded. Since the two matrix terms inside the
	parentheses are positive semidefinite, both sequences
	\[
	\left\{
	\frac{
		X_{k_j}^{\top}A_{k_j}^{\top}
		P_{k_j}A_{k_j}X_{k_j}}
	{\tau_{k_j}}
	\right\}_{j\ge1}
	\]
	and
	\[
	\left\{
	\frac{
		X_{k_j}^{\top}\bar A_{k_j}^{\top}
		P_{k_j}\bar A_{k_j}X_{k_j}}
	{\tau_{k_j}}
	\right\}_{j\ge1}
	\]
	are bounded.
	
	Let \(X_{k_j}^{\perp}\) be a matrix whose columns form an orthonormal
	basis of \(\mathcal W_{k_j}^{\perp}\), chosen from the remaining
	eigenvectors of \(P_{k_j}\). Since \(X_{k_j}\) and \(X_{k_j}^{\perp}\) correspond
	to complementary spectral subspaces of \(P_{k_j}\), we have
	\[
	(X_{k_j}^{\perp})^{\top}
	P_{k_j}X_{k_j}^{\perp}
	\succeq
	\lambda_{r+1,k_j}I.
	\]
	Moreover,
	\[
	\lim_{j\to\infty}
	\frac{\lambda_{r+1,k_j}}{\tau_{k_j}}
	=
	\infty.
	\]
	Indeed, when \(r<\ell\), this follows from the intermediate-layer
	condition in Definition~\ref{regu}. When \(r=\ell\), it follows from
	\[
	\liminf_{k\to\infty}\lambda_{\ell+1,k}>0
	\]
	together with
	\[
	\lim_{j\to\infty}\tau_{k_j}=0.
	\]
	Decompose \(A_{k_j}X_{k_j}\) as
	\[
	A_{k_j}X_{k_j}
	=
	X_{k_j}F_j+X_{k_j}^{\perp}G_j,
	\]
	where
	\[
	F_j:=X_{k_j}^{\top}A_{k_j}X_{k_j},
	\qquad
	G_j:=(X_{k_j}^{\perp})^{\top}A_{k_j}X_{k_j}.
	\]
	Using the orthogonality of the spectral subspaces of \(P_{k_j}\),
	we obtain
	\[
	\begin{aligned}
		X_{k_j}^{\top}A_{k_j}^{\top}
		P_{k_j}A_{k_j}X_{k_j}
		={}&
		F_j^{\top}
		X_{k_j}^{\top}P_{k_j}X_{k_j}
		F_j\\
		&+
		G_j^{\top}
		(X_{k_j}^{\perp})^{\top}
		P_{k_j}X_{k_j}^{\perp}
		G_j\\
		\succeq{}&
		\lambda_{r+1,k_j}G_j^{\top}G_j.
	\end{aligned}
	\]
	Since
	\[
	\left\{
	\frac{
		X_{k_j}^{\top}A_{k_j}^{\top}
		P_{k_j}A_{k_j}X_{k_j}}
	{\tau_{k_j}}
	\right\}_{j\ge1}
	\]
	is bounded and
	\[
	\lim_{j\to\infty}
	\frac{\lambda_{r+1,k_j}}{\tau_{k_j}}
	=
	\infty,
	\]
	it follows that
	\[
	\lim_{j\to\infty}G_j=0.
	\]
	Moreover,
	\[
	\lim_{j\to\infty}F_j
	=
	X^{\top}A_{K^*}X.
	\]
	Taking limits in
	\[
	A_{k_j}X_{k_j}
	=
	X_{k_j}F_j+X_{k_j}^{\perp}G_j
	\]
	gives
	\[
	A_{K^*}X
	=
	X\bigl(X^{\top}A_{K^*}X\bigr).
	\]
	The right-hand side belongs to \(\operatorname{Im}X\). Therefore,
	for every \(z\in\mathbb R^r\),
	\[
	A_{K^*}Xz
	=
	X\bigl(X^{\top}A_{K^*}Xz\bigr)
	\in
	\operatorname{Im}X.
	\]
	Since every vector in \(\operatorname{Im}X\) can be written as
	\(Xz\) for some \(z\in\mathbb R^r\), we conclude that
	\[
	A_{K^*}\mathcal W\subseteq\mathcal W.
	\]
	Applying the same argument to
	\(\bar A_{k_j}X_{k_j}\) gives
	\[
	\bar A_{K^*}\mathcal W\subseteq\mathcal W.
	\]
	
	\noindent\textbf{Step 2: Construction of the closed-loop block
		decomposition.}
	
	The preceding proposition shows that the whole degenerate subspace
	\[
	\mathcal V:=\ker P^*
	\]
	is invariant under both limiting closed-loop matrices:
	\[
	A_{K^*}\mathcal V\subseteq\mathcal V,
	\qquad
	\bar A_{K^*}\mathcal V\subseteq\mathcal V.
	\]
	Step~1 further shows that the critical subspace
	\[
	\mathcal W:=\operatorname{Im}\Pi^{\rm c}
	\]
	is invariant under both matrices:
	\[
	A_{K^*}\mathcal W\subseteq\mathcal W,
	\qquad
	\bar A_{K^*}\mathcal W\subseteq\mathcal W.
	\]
	We next establish the relation between these two invariant subspaces.
	By  Definition~\ref{regu}, the first
	\(\ell\) eigenvalues of \(P_{k_j}\) converge to zero, whereas the
	remaining eigenvalues are uniformly bounded away from zero. Together
	with
	\[
	\lim_{j\to\infty}P_{k_j}=P^*
	\]
	and
	\[
	\lim_{k\to\infty}
	\|\Pi_k^{\rm v}-\Pi^{\rm v}\|=0,
	\]
	this gives
	\[
	\operatorname{Im}\Pi^{\rm v}
	=
	\ker P^*
	=
	\mathcal V.
	\]
	
	For every \(k\), the critical spectral subspace is contained in the
	vanishing spectral subspace:
	\[
	\operatorname{Im}\Pi_k^{\rm c}
	\subseteq
	\operatorname{Im}\Pi_k^{\rm v}.
	\]
	Equivalently,
	\[
	\Pi_k^{\rm v}\Pi_k^{\rm c}=\Pi_k^{\rm c}.
	\]
	Taking limits in this identity gives
	\[
	\Pi^{\rm v}\Pi^{\rm c}=\Pi^{\rm c},
	\]
	and therefore
	\[
	\mathcal W
	=
	\operatorname{Im}\Pi^{\rm c}
	\subseteq
	\operatorname{Im}\Pi^{\rm v}
	=
	\mathcal V.
	\]
	Thus, the state space contains the nested invariant subspaces
	\[
	\mathcal W\subseteq\mathcal V\subsetneq\mathbb R^n.
	\]
	Choose an orthogonal matrix
	\[
	T=[X\ Y\ Q],
	\]
	where the columns of \(X\) form an orthonormal basis of
	\(\mathcal W\), the columns of \([X\ Y]\) form an orthonormal basis
	of \(\mathcal V\), and the columns of \(Q\) form an orthonormal basis
	of \(\mathcal V^\perp\). Hence,
	\[
	\operatorname{Im}X=\mathcal W,
	\qquad
	\operatorname{Im}[X\ Y]=\mathcal V.
	\]
	
	We now show how the nested invariance relations yield the simultaneous
	block upper triangular forms. Since \(\mathcal W\) is invariant under
	\(A_{K^*}\), the columns of \(A_{K^*}X\) belong to
	\(\operatorname{Im}X\). Therefore,
	\[
	Y^\top A_{K^*}X=0,
	\qquad
	Q^\top A_{K^*}X=0.
	\]
	Since \(\mathcal V\) is invariant under \(A_{K^*}\), the columns of
	\(A_{K^*}Y\) belong to \(\mathcal V\), which gives
	\[
	Q^\top A_{K^*}Y=0.
	\]
	Consequently,
	\[
	\begin{aligned}
		T^\top A_{K^*}T
		&=
		\begin{pmatrix}
			X^\top A_{K^*}X
			&
			X^\top A_{K^*}Y
			&
			X^\top A_{K^*}Q
			\\
			Y^\top A_{K^*}X
			&
			Y^\top A_{K^*}Y
			&
			Y^\top A_{K^*}Q
			\\
			Q^\top A_{K^*}X
			&
			Q^\top A_{K^*}Y
			&
			Q^\top A_{K^*}Q
		\end{pmatrix}
		\\
		&=
		\begin{pmatrix}
			A_c & A_{cb} & A_{cd}\\
			0 & A_b & A_{bd}\\
			0 & 0 & A_d
		\end{pmatrix},
	\end{aligned}
	\]
	where
	\[
	A_c:=X^\top A_{K^*}X,
	\qquad
	A_b:=Y^\top A_{K^*}Y,
	\qquad
	A_d:=Q^\top A_{K^*}Q.
	\]
	
	Applying the same invariance argument to \(\bar A_{K^*}\) gives
	\[
	Y^\top\bar A_{K^*}X=0,
	\qquad
	Q^\top\bar A_{K^*}X=0,
	\qquad
	Q^\top\bar A_{K^*}Y=0,
	\]
	and hence
	\[
	T^\top\bar A_{K^*}T
	=
	\begin{pmatrix}
		\bar A_c & \bar A_{cb} & \bar A_{cd}\\
		0 & \bar A_b & \bar A_{bd}\\
		0 & 0 & \bar A_d
	\end{pmatrix},
	\]
	where
	\[
	\bar A_c:=X^\top\bar A_{K^*}X,
	\qquad
	\bar A_b:=Y^\top\bar A_{K^*}Y,
	\qquad
	\bar A_d:=Q^\top\bar A_{K^*}Q.
	\]
	This proves the simultaneous block upper triangular decomposition
	stated in the theorem.
	
	\noindent\textbf{Step 3: Strict mean-square spectral separation.}
	
	We first establish an upper estimate for the mean-square spectral
	radius of the critical block. Define
	\[
	Z_j
	:=
	\frac{1}{\tau_{k_j}}
	X_{k_j}^{\top}P_{k_j}X_{k_j}.
	\]
	As established in Step~1,
	\[
	\lim_{j\to\infty}Z_j=Z_c,
	\qquad
	Z_c\succ0.
	\]
	Recall also that
	\[
	F_j:=X_{k_j}^{\top}A_{k_j}X_{k_j},
	\qquad
	\lim_{j\to\infty}F_j=A_c.
	\]
	From the spectral decomposition used in Step~1, we have
	\[
	X_{k_j}^{\top}A_{k_j}^{\top}
	P_{k_j}A_{k_j}X_{k_j}
	\succeq
	F_j^{\top}
	X_{k_j}^{\top}P_{k_j}X_{k_j}
	F_j.
	\]
	Dividing this inequality by \(\tau_{k_j}\) gives
	\[
	\frac{
		X_{k_j}^{\top}A_{k_j}^{\top}
		P_{k_j}A_{k_j}X_{k_j}}
	{\tau_{k_j}}
	\succeq
	F_j^{\top}Z_jF_j.
	\]
	
	Similarly, define
	\[
	\bar F_j
	:=
	X_{k_j}^{\top}\bar A_{k_j}X_{k_j}.
	\]
	Then,
	\[
	\lim_{j\to\infty}\bar F_j=\bar A_c,
	\]
	and
	\[
	\frac{
		X_{k_j}^{\top}\bar A_{k_j}^{\top}
		P_{k_j}\bar A_{k_j}X_{k_j}}
	{\tau_{k_j}}
	\succeq
	\bar F_j^{\top}Z_j\bar F_j.
	\]
	The two matrix sequences on the left-hand sides of the preceding
	inequalities are bounded, as shown in Step~1. Passing to a further
	subsequence without changing the notation, there exist symmetric
	matrices \(M_c\) and \(\bar M_c\) such that
	\[
	\lim_{j\to\infty}
	\frac{
		X_{k_j}^{\top}A_{k_j}^{\top}
		P_{k_j}A_{k_j}X_{k_j}}
	{\tau_{k_j}}
	=
	M_c
	\]
	and
	\[
	\lim_{j\to\infty}
	\frac{
		X_{k_j}^{\top}\bar A_{k_j}^{\top}
		P_{k_j}\bar A_{k_j}X_{k_j}}
	{\tau_{k_j}}
	=
	\bar M_c.
	\]
	Taking limits in the preceding inequalities yields
	\[
	M_c\succeq A_c^{\top}Z_cA_c,
	\qquad
	\bar M_c\succeq\bar A_c^{\top}Z_c\bar A_c.
	\]
	On the other hand, projecting the regularized fixed-point equation
	onto the critical subspace and dividing by \(\tau_{k_j}\) gives
	\[
	\begin{aligned}
		\gamma_{k_j}Z_j
		={}&
		(1-\tau_{k_j})
		\left(
		\frac{
			X_{k_j}^{\top}A_{k_j}^{\top}
			P_{k_j}A_{k_j}X_{k_j}}
		{\tau_{k_j}}
		\right.\left.
		+
		\sigma^2
		\frac{
			X_{k_j}^{\top}\bar A_{k_j}^{\top}
			P_{k_j}\bar A_{k_j}X_{k_j}}
		{\tau_{k_j}}
		\right)
		+
		\frac{1}{n}I_r.
	\end{aligned}
	\]
	Taking limits in this equation gives
	\[
	\gamma^*Z_c
	=
	M_c+\sigma^2\bar M_c+\frac{1}{n}I_r.
	\]
	Therefore,
	\[
	\gamma^*Z_c
	\succeq
	A_c^{\top}Z_cA_c
	+
	\sigma^2\bar A_c^{\top}Z_c\bar A_c
	+
	\frac{1}{n}I_r.
	\]
	In particular,
	\[
	A_c^{\top}Z_cA_c
	+
	\sigma^2\bar A_c^{\top}Z_c\bar A_c
	\prec
	\gamma^*Z_c.
	\]
	Since \(Z_c\succ0\), the preceding inequality also implies that
	\(\gamma^*>0\). By the mean-square Lyapunov criterion given in
	Lemma~2 of~\cite{feng2008optimal}, we obtain
	\begin{equation}\label{R5}
		\varrho_{\rm ms}(A_c,\bar A_c)<\gamma^*.
	\end{equation}
	It remains to identify \(\gamma^*\) with the mean-square spectral
	radius of the dominant block. Recall that
	\[
	\mathcal V=\ker P^*
	\]
	and that the columns of \(Q\) form an orthonormal basis of
	\(\mathcal V^\perp\). Define
	\[
	P_d:=Q^{\top}P^*Q.
	\]
	We first show that
	\[
	P_d\succ0.
	\]
	Suppose, to the contrary, that there exists a nonzero vector
	\(\xi\) such that
	\[
	\xi^{\top}P_d\xi=0.
	\]
	It follows that
	\[
	(Q\xi)^{\top}P^*(Q\xi)=0.
	\]
	Since \(P^*\succeq0\), we have
	\[
	P^*Q\xi=0,
	\]
	and therefore
	\[
	Q\xi\in\ker P^*=\mathcal V.
	\]
	By the construction of \(Q\), we also have
	\[
	Q\xi\in\mathcal V^\perp.
	\]
	Consequently,
	\[
	Q\xi\in\mathcal V\cap\mathcal V^\perp=\{0\}.
	\]
	This contradicts \(\xi\neq0\) and the orthonormality of the columns
	of \(Q\). Hence,
	\[
	P_d\succ0.
	\]
	Multiplying the limiting fixed-point equation
	\[
	\gamma^*P^*
	=
	A_{K^*}^{\top}P^*A_{K^*}
	+
	\sigma^2\bar A_{K^*}^{\top}P^*\bar A_{K^*}
	\]
	from the left and right by \(Q^{\top}\) and \(Q\), respectively,
	gives
	\[
	\begin{aligned}
		\gamma^*P_d
		={}&
		Q^{\top}A_{K^*}^{\top}P^*A_{K^*}Q\\
		&+
		\sigma^2
		Q^{\top}\bar A_{K^*}^{\top}
		P^*\bar A_{K^*}Q.
	\end{aligned}
	\]
	By the block upper triangular decomposition established in Step~2,
	\[
	A_{K^*}Q
	=
	XA_{cd}+YA_{bd}+QA_d.
	\]
	Moreover,
	\[
	\operatorname{Im}[X\ Y]
	=
	\mathcal V
	=
	\ker P^*,
	\]
	and hence
	\[
	P^*X=0,
	\qquad
	P^*Y=0.
	\]
	It follows that
	\[
	Q^{\top}A_{K^*}^{\top}P^*A_{K^*}Q
	=
	A_d^{\top}P_dA_d.
	\]
	Similarly,
	\[
	\bar A_{K^*}Q
	=
	X\bar A_{cd}+Y\bar A_{bd}+Q\bar A_d,
	\]
	and therefore
	\[
	Q^{\top}\bar A_{K^*}^{\top}
	P^*\bar A_{K^*}Q
	=
	\bar A_d^{\top}P_d\bar A_d.
	\]
	Consequently,
	\[
	\gamma^*P_d
	=
	A_d^{\top}P_dA_d
	+
	\sigma^2\bar A_d^{\top}P_d\bar A_d.
	\]
	Define the positive linear operator
	\[
	\mathcal L_d(P)
	:=
	A_d^{\top}PA_d
	+
	\sigma^2\bar A_d^{\top}P\bar A_d,
	\qquad
	P\in\mathbb S^q.
	\]
	Then \(P_d\succ0\) is a positive definite eigenmatrix of
	\(\mathcal L_d\) associated with the eigenvalue \(\gamma^*\).
	By Lemma~\ref{lem:positive-eigenmatrix-spr},
	\[
	\operatorname{spr}(\mathcal L_d)=\gamma^*.
	\]
	Hence,
	\begin{equation}\label{R6}
		\begin{aligned}
			\operatorname{spr}(\mathcal L_d)
			&=
			\operatorname{spr}\!\left(
			A_d\otimes A_d
			+
			\sigma^2\bar A_d\otimes\bar A_d
			\right)\\
			&=
			\varrho_{\rm ms}(A_d,\bar A_d)
			=
			\gamma^*.
		\end{aligned}
	\end{equation}
	Combining (\ref{R5}) and (\ref{R6}) gives
	\[
	\varrho_{\rm ms}(A_c,\bar A_c)
	<
	\gamma^*
	=
	\varrho_{\rm ms}(A_d,\bar A_d),
	\]
	which proves the strict mean-square spectral separation
	in~\eqref{eq:strict-ms-separation}. This completes the proof.
\end{proof}

\end{document}